\def\cD{{\mathcal D}}   \def\cE{{\mathcal E}}   
\def\cG{{\mathcal G}}      
\def\cJ{{\mathcal J}}      
\def\cP{{\mathcal P}}      
\def\cV{{\mathcal V}}
\def\cal H{{\mathcal H}}
\def\R{\mathbb{R}}
\def\C{\mathbb{C}}
\def\N{\mathbb{N}}
\def\dom{{\text{\rm dom\,}}}
\def\loc{{\text{\rm loc}}}
\def\dist{{\text{\rm dist}}}
\def\phi{\varphi}
\def\d{\textup{d}}
\def\eps{\varepsilon}
\DeclareMathOperator{\diam}{diam}
\DeclareMathOperator{\osc}{osc}
\DeclareMathOperator{\interior}{int}
\renewcommand{\theta}{\vartheta}
\newtheorem{theorem}{Theorem}[section]
\newtheorem*{thm*}{Theorem}
\newtheorem{proposition}[theorem]{Proposition}
\newtheorem{corollary}[theorem]{Corollary}
\newtheorem{lemma}[theorem]{Lemma}
\newtheorem{conjecture}[theorem]{Conjecture}
\newtheorem{question}[theorem]{Question}
\theoremstyle{definition}
\newtheorem{definition}[theorem]{Definition}
\newtheorem{example}[theorem]{Example}
\newtheorem{assumption}[theorem]{Assumption}
\newtheorem{remark}[theorem]{Remark}
\newtheorem*{ack}{Acknowledgements}
\numberwithin{equation}{section}
\numberwithin{figure}{section}
\title[On the hot spots of quantum graphs]{On the hot spots of quantum graphs}
\author[J.\,B.~Kennedy]{James B.\, Kennedy}
\address{Grupo de F\'isica Matem\'atica\\ Faculdade de Ci\^encias, Universidade de Lisboa \\ Campo Grande, Edif\'icio C6 \\ 1749-016 Lisboa \\ Portugal}
\email{jbkennedy@fc.ul.pt}
\author[J.~Rohleder]{Jonathan Rohleder}
\address{Matematiska institutionen\\ Stockholms universitet \\
106 91 Stockholm \\
Sweden}
\email{jonathan.rohleder@math.su.se}
\subjclass[2010]{34B45, 34L10, 35R02, 81Q35}
\keywords{Quantum graphs, Laplace operators, eigenvalues, eigenfunctions, hot spots}
\begin{document}

\begin{abstract}
We undertake a systematic investigation of the maxima and minima of the eigenfunctions associated with the first nontrivial eigenvalue of the Laplacian on a metric graph equipped with standard (continuity--Kirchhoff) vertex conditions. This is inspired by the famous hot spots conjecture for the Laplacian on a Euclidean domain, and the points on the graph where maxima and minima are achieved represent the generically ``hottest'' and ``coldest'' spots of the graph. We prove results on both the number and location of the hot spots of a metric graph, and also present a large number of examples, many of which run contrary to what one might na\"ively expect. Amongst other results we prove the following: (i) generically, up to arbitrarily small perturbations of the graph, the points where minimum and maximum, respectively, are attained are unique; (ii) the minima and maxima can only be located at the vertices of degree one or inside the doubly connected part of the metric graph; and (iii) for any fixed graph topology, for some choices of edge lengths all minima and maxima will occur only at degree-one vertices, while for others they will only occur in the doubly connected part of the graph.
\end{abstract}

\maketitle

\vspace*{-4mm}

\tableofcontents

\section{Introduction}
\label{sec:intro} 

In recent years there has been a pronounced growth of interest in the structure of the spectrum of quantum graphs, that is, of differential operators such as realisations of the Laplacian defined on metric graphs, see for example \cite{ASSW17,Ari16,BaLe17,BKKM18,BKKM17,BCJ19,Fri05,KKTK16,KKMM16,KMN13,KN14,R17,RS18}. In most of these works particular attention has been given to the relationship between the eigenvalues of such an operator and the topological and metric structure of the graph on which the operator is defined: for example, for the standard Laplacian (i.e., the Laplacian equipped with continuity and Kirchhoff conditions at the vertices), which graph maximises or minimises the first nontrivial eigenvalue among all graphs of fixed total length, or diameter etc.? Even for other variational problems such as nonlinear Schr\"odinger equations on metric graphs, there is now an extensive literature examining the relationship between the topological and metric structure of the graph and the existence of solutions, e.g., \cite{AST17,AST16,AST15,CFN17,DT19,Hof19,KPG19} and the references therein. 

By now such questions of ``shape optimisation'' for eigenvalues (as well as the existence of solutions of nonlinear equations) have been thoroughly investigated. At least as interesting, and as informative, is the behaviour, or profile, of the corresponding \emph{eigenfunctions}, as the following two motivating examples should demonstrate. 

Firstly, one major application of eigenfunctions (say, of the Laplace-Beltrami operator on a manifold), especially those sign-changing ones corresponding to the smallest positive eigenvalue, is that their nodal domains, i.e., the connected components of the set where the eigenfunction is nonzero, tend to be a good way to partition the object on which they are defined: in the case of manifolds, this is the classical observation of Cheeger \cite{Che70}. But more recently a large body of literature has developed around partitions of \emph{discrete} graphs via nodal (zero) and sign-changing properties of the (discrete) Laplacian eigenvectors, in particular the so-called Fiedler vectors, the sign-changing eigenvectors corresponding to the smallest nontrivial eigenvalue, also known as the algebraic connectivity; although many results have now been extended to the higher eigenvalues and so-called higher-order Cheeger constants. We refer to \cite{LLPP15,LGT14} and the references therein. There is a large body of work on the nodal structure of the eigenfunctions of Laplacian-type operators on metric graphs, see, for example, \cite{ABB18,Ban14,BBS12,GSW04} and the references therein, although the focus is perhaps more commonly placed on the size of the nodal set of the eigenfunctions (the so-called nodal count) than the distribution of the zeros. Work is currently underway to investigate spectral partition questions on metric graphs, see \cite{BBRS12,HKMP20,KKLM20}. 

Secondly, the \emph{hot spots conjecture}, originally formulated in the 1970s for domains, roughly speaking asks for which domains (or manifolds, graphs, \ldots) the maximum and minimum of the eigenfunction(s) of the first nontrivial eigenvalue of the Neumann Laplacian are on the boundary of the domain; see, e.g., the introduction of \cite{BB99} for a motivation and an historical description of the problem on domains, \cite{BW99} for the famous counterexample to the original conjecture, and~\cite{JM20,KT19,S15,S20} for recent advances on the problem. The idea behind the conjecture comes from the corresponding heat equation: an expansion of solutions as Fourier series in the eigenfunctions shows that the maximum and minimum of the first nontrivial eigenfunctions represent the generically hottest and coldest points in the domain; and as heat flow should respect the geometry of the domain, it is natural to expect these points to be located far away from each other in some reasonable sense.

The same question, or rather an adapted variant, has also been asked in the case of discrete graphs: motivated in part by previous applications of the critical points of the Fiedler vector(s) of a finite discrete graph to the analysis of data in various contexts, in \cite{CSAV11} the authors conjectured that its points of maximum and minimum should always realise the diameter of the graph; we might call this a version of the hot spots conjecture for discrete graphs. As it turns out, there are fairly simple counterexamples, even among tree graphs, as first exhibited in \cite{E11}. But the principle that the graph hot spots should represent an analytic, weighted version of the diameter is intuitively reasonable, and finding classes of graphs for which the graph version of the hot spots conjecture holds (and understanding better the extent to which it fails) is a topic of ongoing research; we refer to \cite{GP19,Lef13} and in particular \cite{LS19}.

These same motivations remain valid in the case of quantum graphs, where comparatively little seems to be known, at least in terms of the profile of the eigenfunctions: some work has been done constructing so-called \emph{landscape functions} to control their size \cite{HM18a,HM18b}, and relatively recently the concept of \emph{Neumann domains} of the eigenfunctions, the regions separated by critical points of the eigenfunctions, was introduced and is now being studied \cite{Alo20,AB19,ABBE18,BF16}. But to date the ``hot and cold spots'' of a quantum graph do not seem to have received direct attention, a preliminary note of the current authors excluded \cite{KRpamm}. Our principal goal here is thus to understand better how these hot and cold spots, more precisely the global, and also local, maxima and minima of the first nontrivial eigenfunction(s) of the standard Laplacian, the natural quantum graph analogue of both the Neumann and the discrete graph Laplacians, depend on metric, geometric and topological features of the metric graph. The current work is thus an attempt to initiate investigation into precisely this relationship, not just in terms of the location of these extrema but also in terms of their number. Since we believe this to be the first systematic investigation into the subject, more open problems and conjectures arise than we can reasonably deal with in one work. Therefore we will summarise a large number of these in a final section. 

Let us give a short overview on the results and observations of this paper. First of all, we show that the ``na\"ive'' version of the hot spots conjecture, which states that the global maxima and minima of any eigenfunction for the smallest positive eigenvalue of the standard Laplacian are exclusively located at the set of vertices of degree one, proved in \cite{KRpamm} for metric trees (i.e.\ metric graphs without cycles), fails in general. This is obvious for graphs which do not contain vertices of degree one at all, but we also provide examples of graphs with arbitrarily many such vertices where, nevertheless, all the ``hot and cold spots'' lie elsewhere (see Section~\ref{sec:location-summary}). This may be viewed as evidence that the set of degree-one vertices is not necessarily a good notion of a boundary for a metric graph, at least from an analytic point of view -- except for the case of trees. This theme will come to the fore several times over the course of the paper; indeed, intuitively, the low eigenvalues and their eigenfunctions do not ``see'' extremely short edges, thus a large perturbation of the set of degree-one vertices may correspond to a very small perturbation of the eigenvalues and eigenfunctions.

However, our first major result states that the location of (local as well as global) maxima and minima on a metric graph is not totally arbitrary but they have to lie, if not on degree-one vertices, then within the doubly-connected part of the graph; cf.\ Section~\ref{sec:location-summary} and in particular Theorem~\ref{thm:doublyConnectedPart}. In other words, roughly speaking, maxima and minima of such an eigenfunction can only be located either on a vertex of degree one or inside a cycle. It is natural to ask to what extent which of these cases prevails is dependent on the topology of the underlying discrete graph. We observe that for any given discrete graph one may force maximum and minimum to be achieved arbitrarily on vertices of degree one or in the doubly-connected part by choosing appropriate edge lengths for the corresponding metric graph, provided the graph has at least two degree-one vertices and two cycles; see Section~\ref{sec:topology-summary} and in particular Theorem~\ref{thm:examplesummary}.

As regards the relationship between the ``hottest'' and ``coldest'' spots of a metric graph and its diameter, we already noted in \cite{KRpamm} that the distance between these points does not generally realise the diameter, even on trees; indeed, the counterexample in \cite{KRpamm} is very much in the spirit of the ``Fiedler rose'' graph constructed for discrete graphs in~\cite{E11}. In the present paper we sharpen this construction by giving a family of graphs each of which has diameter one and for which the distance between the maxima and the minima of the eigenfunction for the smallest positive eigenvalue becomes arbitrarily small (Proposition~\ref{prop:hotspotsClose}). On the other hand, we show that for some special classes of graphs such as star graphs the distance between any minimum and any maximum equals the diameter (Proposition~\ref{prop:starDiameter}).

Let us now consider the \emph{number} of hot spots of a metric graph. It is clear that there exist only finitely many such points as long as the eigenfunction of the standard Laplacian corresponding to the smallest positive eigenvalue is unique up to scalar multiples. However, this is not always the case, and in cases where it fails, the number of hot spots may be infinite; for instance, for the graph consisting of a single loop every point is the global maximum of some eigenfunction. We show that this phenomenon is not limited to the loop but happens as well for so-called equilateral pumpkins and for equilateral complete graphs (Proposition~\ref{prop:m-gamma-examples}). Using the same tools we show that the set of points where any eigenfunction corresponding to the smallest positive eigenvalue takes a local maximum or minimum is either finite or uncountable (Proposition~\ref{prop:m-finite-or-uncountable}). For the global maximum and minimum we show that after an arbitrarily small perturbation of the graph (in the sense of changing edge lengths arbitrarily little or attaching arbitrarily short pendant edges) these points are unique; more specifically, the eigenspace is one-dimensional and the corresponding eigenfunction takes its global maximum and minimum at exactly one point each (Theorem~\ref{thm:m-generically-2}).

This article is structured as follows. After some preliminaries on metric graphs and their hot spots in Section~\ref{sec:prelim}, we use Section~\ref{sec:examples} to discuss the hot spots of some special classes of graphs such as pumpkins, flowers and complete graphs. This may give the reader a grasp of the possible behaviour of these spots on a metric graph. In Section~\ref{sec:main} we provide a complete overview of all our  results, including the examples, and state them rigorously. They split into results on the number of hot spots (Section~\ref{sec:number-summary}), the location of the hot spots (Section~\ref{sec:location-summary}) and the behaviour of the hot spots as a function of the edge lengths, among all graphs of a given topology (Section~\ref{sec:topology-summary}), including but not limited to the theorems and examples mentioned above. Sections~\ref{sec:number}--\ref{sec:topology} contain the proofs of all the results as well as the elaboration of the examples in detail. Finally, Section~\ref{sec:conjectures} contains a series of conjectures whose proofs would exceed the scope of this paper, together with some additional remarks. As the proofs of some of our results rely on convergence properties of eigenfunctions if certain edge lengths shrink to zero, we provide the required technical result in the appendix.

\section{Preliminaries}%: metric graphs and the standard Laplacian}
\label{sec:prelim}

\subsection{Basics on metric graphs}
\label{subsec:basic}

Throughout this paper, $\cG = (\cV, \cE)$ is a discrete graph consisting of a finite set $\cV$ of vertices and a finite set $\cE$ of edges; we write $E$ and $V$ for the cardinality of $\cE$ and $\cV$, respectively. We normally assume that $\cG$ is connected, i.e., each two vertices are connected to each other by a path. We assign a length $L (e) \in (0,\infty)$ to each edge $e \in \cE$ and identify each edge $e$ with the interval $[0, L (e)]$. Upon taking the natural metric induced by the Euclidean metric on each edge, $\cG$ becomes a compact metric space that we call a {\em metric graph} and denote by $\Gamma$. We will sometimes say that $\cG$ is the {\em underlying discrete graph} corresponding to the metric graph $\Gamma$. Conversely, for a discrete graph $\cG$ we call any metric graph $\Gamma$ whose underlying discrete graph equals $\cG$ an {\em associated metric graph}. Given a discrete graph $\cG$, the set of all possible associated metric graphs may be canonically parametrised by $\R_+^E$: more precisely, we fix an ordering of the set $\cE$ of edges and for each $(y_1,\ldots,y_E) \in \R_+^E$ assign the metric graph with edge lengths $L(e_1) = y_1, \ldots, L(e_E) = y_E$.

The metric on $\Gamma$ gives rise to a distance function $\dist$, with respect to which the diameter of the graph
\begin{displaymath}
	\diam(\Gamma) := \max \{ \dist (x,y): x,y \in \Gamma \}
\end{displaymath}
is well defined and finite. We will denote by $\dist_e$ the corresponding distance function on the edge $e \in \cE$, treated as the interval $[0, L(e)]$. We will, moreover, write
\begin{align*}
 L (\Gamma) := \sum_{e \in \cE} L (e)
\end{align*}
for the total length of $\Gamma$.

As we identify every edge $e \in \cE$ with an interval $[0, L (e)]$, it is natural to distinguish the vertex $o (e)$ from which $e$ originates, i.e.\ which corresponds to the zero endpoint of the interval, and the vertex $t (e)$ at which $e$ terminates. An edge $e$ is called a loop if $o (e) = t (e)$. For a given vertex $v \in \cV$, we set $\cE_{v, \rm i} := \{ e \in \cE: v = o (e)\}$ and $\cE_{v, \rm t} := \{ e \in \cE: v = t (e)\}$ to be the set of all edges that originate from $v$ or terminate at $v$, respectively. The \emph{degree} $\deg v$ of a vertex $v$ is defined as $\deg v = |\cE_{v, \rm i}| + |\cE_{v, \rm t}|$. We explicitly allow $\Gamma$ to have parallel edges and loops.

The {\em (first) Betti number} $\beta$ of a graph $\Gamma$ or $\cG$ is the number of independent cycles in the graph; equivalently, it is given by $\beta = E - V + 1$. We call $\Gamma$ a \emph{tree} if it contains no cycles, i.e., between each two points on the graph there is a unique non-self-intersecting path connecting them; equivalently, if $\beta = 0$. Motivated by the situation on trees, see \cite{KRpamm},
% and in accordance with common usage,
we define the \emph{boundary} $\partial \Gamma$ to be the set of vertices of degree one,
\begin{displaymath}
 \partial \Gamma := \{ v \in \cV: \deg v = 1 \};
\end{displaymath}
we point out that the notion of the boundary of a graph needs to be interpreted with care, see the discussion in the introduction. Note that we view $\partial \Gamma$ as a subset of the metric space $\Gamma$ but identify it with a part of the discrete graph $\cG$; when this perspective is appropriate we will write $\partial \cG$ instead of $\partial \Gamma$.  We call an edge $e$ a {\em bridge} if $\Gamma$ is no longer connected after removing $e$ (but not its endpoint vertices) from the graph. In particular, in a tree every edge is a bridge. Finally, we note the following property for future reference.

\begin{definition}
\label{def:generic}
We say that a property $P$ holds \emph{generically} if, for every discrete graph $\cG$ which is not a cycle graph (see Example~\ref{ex:cycle}), the set of all vectors $y \in \R_+^E$ for which $P$ holds for the associated metric graph with edge lengths $y$ is residual, i.e., of the second Baire category (that is, a countable intersection of open dense sets).
\end{definition}

In particular, if a property holds generically, then given any graph $\Gamma$ it is possible to perturb the edge lengths by an arbitrarily small amount, such that the property holds on the perturbed graph.

\subsection{Function spaces and the standard Laplacian}

If $f:\Gamma \to \C$ or $\R$ is any function defined on $\Gamma$, then we write $f_e :=f |_e$ for the restriction of $f$ to a given edge $e \in \cE$. We use the standard function spaces on $\Gamma$, namely:
\begin{enumerate}
\item[(1)] the space of square-integrable functions $L^2 (\Gamma) = \oplus_{e \in \cE} L^2 (0, L (e))$;
\item[(2)] the space $C (\Gamma)$ of functions being continuous on each edge and having, at each vertex, a value independent of the choice of the incident edge; and
\item[(3)] the Sobolev spaces
\begin{displaymath}
	\widetilde{H}^k (\Gamma) := \big\{ f \in L^2 (\Gamma): f_e \in H^k(0, L (e)) \text{ for all } e \in \cE \big\}, \quad k = 1, 2, \dots,
\end{displaymath}
and $H^1 (\Gamma) = \widetilde H^1 (\Gamma) \cap C (\Gamma)$; cf.~\cite[Section~1.4]{BK13} or \cite[Section~2]{BKKM18}.
\end{enumerate}
Since $\Gamma$ has finite total length, we may identify any continuous function with a function in $L^2 (\Gamma)$ and the embedding $C(\Gamma) \hookrightarrow L^2 (\Gamma)$ of $C (\Gamma)$, equipped with the supremum norm, in $L^2 (\Gamma)$ is continuous. Likewise, the embedding $H^1 (\Gamma) \hookrightarrow C(\Gamma)$ is continuous.

We consider the Laplacian in $L^2 (\Gamma)$ with standard (continuity--Kirchhoff) vertex conditions given by
\begin{align*}
 (- \Delta_\Gamma f)_e & = - f_e'' \qquad \text{for all}~e \in \cE, \\
 \dom (- \Delta_\Gamma) & = \left\{ f \in H^1 (\Gamma) \cap \widetilde H^2 (\Gamma) :
	\partial_\nu f (v) = 0~\text{for~all}~v \in \cV \right\},
\end{align*}
where 
\begin{align*}
 \partial_\nu f (v) = \sum_{t (e) = v} f_e' (L (e)) - \sum_{o (e) = v} f_e' (0)
\end{align*}
is the sum of derivatives (taken in the direction pointing towards the vertex) at $v$ on all edges incident to $v$. This operator is associated with the usual sesquilinear form given by
\begin{displaymath}
    \int_\Gamma f' \overline{g'}\,\textrm{d}x \equiv \sum_{e \in \cE} \int_e f_e' \overline{g_e'}\,\textrm{d}x
\end{displaymath}
defined for $f, g \in H^1 (\Gamma)$ in the sense of, e.g.,~\cite[Chapter~VI]{K95}. Note that up to unitary equivalence this operator is independent of the choice of orientation of the edges, i.e., the choice of originating and terminal vertices. Its spectrum consists of a discrete set of real eigenvalues, each of finite multiplicity; we count them with their multiplicities and denote them by
\begin{align*}
 0 = \mu_1 (\Gamma) < \mu_2 (\Gamma) \leq \mu_3 (\Gamma) \leq \dots.
\end{align*}
We may choose the corresponding eigenfunctions to be real-valued -- which we will \emph{always} do, here and throughout, without further comment -- and form an orthonormal basis of $L^2 (\Gamma)$. Each of these eigenfunctions is in $H^1 (\Gamma)$ and thus, in particular, continuous and defined everywhere on $\Gamma$. The smallest nontrivial eigenvalue $\mu_2 (\Gamma)$ (which we stress may be multiple) admits the variational characterisation
\begin{displaymath}
	\mu_2 (\Gamma) = \inf \left\{ \frac{\int_\Gamma |f'|^2\,\textrm{d}x}{\int_\Gamma |f|^2\,\textrm{d}x}: 0 \neq f \in H^1(\Gamma)
	\text{ and } \int_\Gamma f\,\textrm{d}x = 0 \right\},
\end{displaymath}
with the infimum being achieved exactly by the functions in the eigenspace of the value $\lambda = \mu_2 (\Gamma)$. We stress that if $\mu_2 (\Gamma)$ is multiple, then we consider \emph{all} eigenfunctions in its (more than one-dimensional) eigenspace. For more details on the properties of Laplace-type operators on metric graphs, we refer to Section~1.4 and Chapter~3 of \cite{BK13}. We finish by noting that the operator $- \Delta_\Gamma$ does not feel vertices of degree two as due to the continuity and Kirchhoff conditions one may replace two edges $e, \hat e$ connected by a vertex of degree two by one edge of length $L (e) + L (\hat e)$ without changing the operator (up to unitary equivalence); see also \cite[Section~3]{BKKM17}.

\subsection{The hot spots of a metric graph}

As mentioned in the introduction, given a metric graph $\Gamma$, we will be interested in the sets of (local and global) extrema of the eigenfunctions corresponding to $\mu_2 (\Gamma)$, which we will refer to as hot spots. We recall that we will always restrict to real-valued eigenfunctions, and that a basis of the eigenspace that only consists of real-valued functions can always be chosen. We denote the set of global hot spots by
\begin{displaymath}
	M := \Big\{ x \in \Gamma: \text{ there exists $\psi$ corresp.~to $\mu_2 (\Gamma)$ such that } %\\
	\psi (x) = \max_{y \in \Gamma} \psi (y) \Big\}
\end{displaymath}
%\end{multline*}
and its local version by
\begin{multline*}
	M_{\loc} := \Big\{ x \in \Gamma: \text{ there exists $\psi$ corresp.~to $\mu_2 (\Gamma)$ such that } \\
	0 \neq \psi (x) = \max_{y \in B_\varepsilon (x)} \psi (y) \text{ for some } \varepsilon > 0 \Big\},
\end{multline*}
where $B_\varepsilon (x) = \{ y \in \Gamma: \dist (x,y) < \varepsilon \}$ denotes the open $\varepsilon$-ball centred at $x$. (Since $\psi$ is an eigenfunction if and only if $-\psi$ is, we could equally replace the maximum with the minimum in the above definitions.) If we wish to emphasise the dependence of $M$ and $M_{\loc}$ on the graph $\Gamma$, then we may alternatively write $M(\Gamma)$ and $M_{\loc}(\Gamma)$, respectively. We first note the following sign property of the local extrema.

\begin{lemma}\label{lem:maxPos}
Let $\psi$ be any eigenfunction of $- \Delta_\Gamma$. If $\psi$ takes a nonzero local maximum (or minimum, respectively) at some point $x_0 \in \Gamma$ then $\psi (x_0) > 0$ (or $\psi (x_0) < 0$, respectively).
\end{lemma}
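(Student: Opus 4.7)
The plan is to split the argument by whether $x_0$ lies in the interior of an edge or at a vertex, and to assume throughout that the eigenvalue $\mu$ associated with $\psi$ is positive. (The only $\mu=0$ eigenfunctions are constants, for which the statement is implicitly vacuous once one regards a ``local maximum'' as requiring non-triviality in some neighbourhood.) Throughout the argument $\psi$ satisfies $-\psi_e''=\mu\psi_e$ on every edge $e$.

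\emph{Interior of an edge.} If $x_0$ lies in the open interior of some edge $e$, then $\psi_e$ is smooth and has a local maximum at $x_0$ in the ordinary one-dimensional sense, so $\psi_e''(x_0)\leq 0$. Substituting into the eigenvalue equation $-\psi_e''(x_0)=\mu\psi(x_0)$ and using $\mu>0$ yields $\psi(x_0)\geq 0$, and the hypothesis $\psi(x_0)\neq 0$ upgrades this to strict positivity.

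\emph{At a vertex.} The more delicate case is $x_0=v\in\cV$. I would reparametrise each edge incident to $v$ so that $s=0$ corresponds to $v$, i.e.\ replace each such $e$ by its reversed orientation if needed. The local-maximum property at $v$ then gives $\psi_e'(0)\leq 0$ on every incident edge (one-sided derivative pointing away from $v$). In this outgoing convention the Kirchhoff condition at $v$ reads $\sum_{e\ni v}\psi_e'(0)=0$, and combined with the previous inequalities it forces $\psi_e'(0)=0$ on every edge meeting $v$. Each restriction $\psi_e$ is then the unique solution of the ODE $\psi_e''+\mu\psi_e=0$ with initial data $\psi_e(0)=\psi(v)$ and $\psi_e'(0)=0$, namely $\psi_e(s)=\psi(v)\cos(\sqrt{\mu}\,s)$. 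If $\psi(v)<0$, then $\psi_e(s)>\psi(v)$ for all small $s>0$, contradicting that $v$ is a local maximum; hence $\psi(v)\geq 0$, and the nonvanishing hypothesis gives $\psi(v)>0$. The minimum statement follows by applying the maximum statement to $-\psi$.

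There is no serious technical obstacle here. The only step that is not routine is the vertex case, where one must combine the one-sided inequalities $\psi_e'(0)\leq 0$ with the Kirchhoff sum to force each outgoing derivative to vanish; once this is done, the explicit cosine representation of $\psi_e$ near $v$ makes the sign of $\psi(v)$ transparent. No appeal to a global maximum principle is needed.
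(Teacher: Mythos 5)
Your proof is correct, and it follows the same basic strategy as the paper's: analyse the ODE $-\psi_e'' = \mu\psi_e$ locally at $x_0$ and derive a contradiction from $\psi(x_0)<0$. The difference lies in how the vertex case is closed. The paper picks a single incident edge, notes $\psi_e''(x_0) = -\mu\,\psi_e(x_0) > 0$, and asserts that the resulting convexity contradicts the local maximum; strictly speaking this is only a contradiction on an edge whose outgoing derivative at $x_0$ is nonnegative (a convex function with strictly negative outgoing derivative is perfectly compatible with a one-sided local maximum at the endpoint), and the existence of such an edge is exactly what the Kirchhoff condition supplies. You make this step explicit: the one-sided inequalities $\psi_e'(0)\le 0$ on all incident edges together with the Kirchhoff identity $\sum_e \psi_e'(0)=0$ force every outgoing derivative to vanish, after which the explicit solution $\psi(v)\cos(\sqrt{\mu}\,s)$ finishes the argument. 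So your write-up is, if anything, the more complete version of the paper's one-line proof. Your parenthetical about $\mu=0$ is also a worthwhile precaution the paper omits: for a negative constant eigenfunction the non-strict definition of local maximum would make the statement false, so one should either assume $\mu>0$ (as both proofs implicitly do) or interpret ``local maximum'' strictly.
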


\begin{proof}
Assume that $\psi$ has a local maximum at $x_0$ with $\psi (x_0) < 0$; without loss of generality $x_0$ is a vertex as we may interpret any inner point of an edge as a vertex of degree two. Let $e$ be an edge incident to $x_0$. Then $\psi_e$ is differentiable and satisfies the eigenvalue equation up to $x_0$, that is, $\psi_e'' (x_0) = - \mu_2 (\Gamma) \psi_e (x_0) > 0$ by assumption, that is, $\psi_e$ is convex close to $x_0$, which contradicts the fact that $\psi$ takes a local maximum at $x_0$. The statement on minima follows by replacing $\psi$ by $- \psi$.
\end{proof}

For any $\Gamma$, it is immediate from the definitions and the fact that any eigenfunction for $\mu_2(\Gamma)$ must change sign in $\Gamma$ that
\begin{equation*}
% \label{eq:localglobal}
	\emptyset \neq M \subset M_{\loc}.
\end{equation*}

Finally, for a fixed eigenfunction $\psi$ (not necessarily corresponding to $\mu_2(\Gamma)$, although in practice we will only be interested in this case), we will denote its set of global and local extrema by
\begin{align*}
	M_\psi := \Big\{ x \in \Gamma: \psi (x) = \max_{y \in \Gamma} \psi (y) \text{ or } \psi (x) = \min_{y \in \Gamma} \psi (y)\Big\}
\end{align*}
and
\begin{multline*}
	M_{\psi,\loc} := \Big\{ x \in \Gamma: 0 \neq \psi (x) = \max_{y \in B_\varepsilon (x)} \psi (y) \text{ for some } \varepsilon > 0\\
	\text{or } 0 \neq \psi (x) = \min_{y \in B_\varepsilon (x)} \psi (y) \text{ for some } \varepsilon > 0 \Big\},
\end{multline*}
respectively. Again, for any $\psi$, we clearly have
\begin{equation*}
% \label{eq:localglobal-psi}
	\emptyset \neq M_\psi \subset M_{\psi,\loc},
\end{equation*}
as well as
\begin{displaymath}
	M = \bigcup_{\psi} M_\psi, \qquad M_{\loc} = \bigcup_\psi M_{\psi,\loc},
\end{displaymath}
where in both cases the union is taken over all eigenfunctions $\psi$ corresponding to the same eigenvalue $\mu_2(\Gamma)$. Note that due to the sinusoidal form of the eigenfunctions, the sets $M_\psi$ and $M_{\psi,\loc}$ are always finite. Observe, however, that the analogous statement for the sets $M$ and $M_\loc$ may be false as we discuss below.

\section{Hot spots of special graphs}
\label{sec:examples}

In this section we introduce a few special classes of graphs and discuss their hot spots. This may give the reader an impression what the sets of hot spots defined above may look like in specific examples. Moreover, these graphs will play a role in the forthcoming considerations. 

\begin{example}\label{ex:path}
A {\em path graph} is a connected graph consisting of two vertices of degree one and an arbitrary number of vertices of degree two, see the left-hand side of Figure~\ref{fig:pathCycle}.
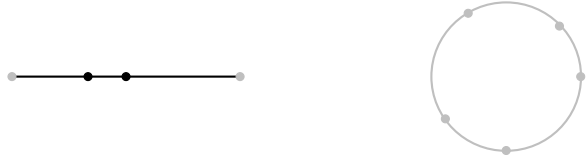
\begin{figure}[h]
\begin{tikzpicture}
\coordinate (a) at (-1,0);
\coordinate (b) at (0,0);
\coordinate (c) at (0.5,0);
\coordinate (d) at (2,0);
\coordinate (e) at (5.5,0);
\draw[thick] (a) -- (b);
\draw[thick] (b) -- (c);
\draw[thick] (c) -- (d);
\draw[lightgray,fill] (a) circle (1.5pt);
\draw[fill] (b) circle (1.5pt);
\draw[fill] (c) circle (1.5pt);
\draw[lightgray,fill] (d) circle (1.5pt);
\draw[lightgray,thick] (e) circle (28pt);
\draw[lightgray,fill] (6.48,0) circle (1.5pt);
\draw[lightgray,fill] (6.2,0.67) circle (1.5pt);
\draw[lightgray,fill] (5.0,0.84) circle (1.5pt);
\draw[lightgray,fill] (5.5,-0.98) circle (1.5pt);
\draw[lightgray,fill] (4.7,-0.56) circle (1.5pt);
\end{tikzpicture}
\caption{Left: a path graph with the hot spots marked in grey. Right: a cycle graph; here every point is a hot spot.}
\label{fig:pathCycle}
\end{figure}
As vertices of degree two do not influence the Laplacian with standard vertex conditions, for our purposes we may identify any metric path graph $\Gamma$ with the interval $[0, L (\Gamma)]$ and the Laplacian on $\Gamma$ with the Laplacian on that interval with Neumann boundary conditions $\psi' (0) = \psi' (L (\Gamma)) = 0$. In particular, we have $\mu_2 (\Gamma) = \frac{\pi^2}{L (\Gamma)^2}$ with corresponding eigenfunction $\cos \frac{\pi x}{L (\Gamma)}$. Consequently, $M = M_\loc = \partial \Gamma$ in this case.
\end{example}

\begin{example}
\label{ex:cycle}
A {\em cycle graph} (or just {\em cycle}) is a connected graph for which each vertex has degree two, see the right-hand side of Figure~\ref{fig:pathCycle}. The Laplacian on such a graph can be identified with the Laplacian on $[0, L (\Gamma)]$ with periodic boundary conditions 
\begin{align*}
 \psi (0) & = \psi (L (\Gamma)), \\ 
 \psi' (0) & = \psi' (L (\Gamma)),
\end{align*}
and the first positive eigenvalue equals $\mu_2 (\Gamma) = \frac{4 \pi^2}{L (\Gamma)^2}$. The corresponding eigenspace is two-dimensional and consists of the functions
\begin{align*}
 \psi(x) = A \cos \Big(\frac{2 \pi x}{L (\Gamma)} - c \Big), \qquad A, c \in \R.
\end{align*}
In particular, by choosing the shift $c$ appropriately we may move the (unique) local maximum of $\psi$ to any point on the cycle. As a consequence, on each cycle graph we have $M = M_\loc = \Gamma$, even though $M_\psi = M_{\psi,\loc}$ always contains exactly two points.
\end{example}

\begin{example}
\label{ex:pumpkin}
A {\em pumpkin graph} consists of two vertices $v_1, v_2$ and a collection of edges each of which connects $v_1$ with $v_2$, see the left-hand side of Figure~\ref{fig:pumpkinStar}. If $\Gamma$ is equilateral with $L (e) = 1$ for each edge $e$ then $\mu_2 (\Gamma) = \pi^2$. The corresponding eigenspace is $E$-dimensional and is spanned by $E - 1$ functions each of which has support on two edges and vanishes at both vertices, and one function equal to $\cos (\pi x)$ on each edge (assuming that all edges are parametrised in the same direction). By taking linear combinations of these eigenfunctions it can be seen that any point on the graph is a hot spot, i.e.\ $M = M_\loc = \Gamma$; see Proposition~\ref{prop:m-gamma-examples} below.
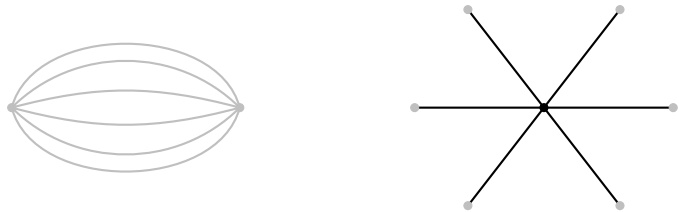
\begin{figure}[h]
\begin{tikzpicture}
% Pumpkin
\coordinate (a) at (0,0);
\coordinate (b) at (3,0);
\draw[lightgray, fill] (0,0) circle (1.5pt);
\draw[lightgray, fill] (3,0) circle (1.5pt);
\draw[lightgray, thick, bend left=15] (a) edge (b);
\draw[lightgray, thick, bend left=45] (a) edge (b);
\draw[lightgray, thick, bend left=75] (a) edge (b);
\draw[lightgray, thick, bend right=15] (a) edge (b);
\draw[lightgray, thick, bend right=45] (a) edge (b);
\draw[lightgray, thick, bend right=75] (a) edge (b);
% Star
\coordinate (c) at (7,0);
\coordinate (d) at (5.3,0);
\coordinate (e) at (8.7,0);
\coordinate (f) at (6,1.3);
\coordinate (g) at (6,-1.3);
\coordinate (h) at (8,1.3);
\coordinate (i) at (8,-1.3);
\draw[thick] (c) edge (d);
\draw[thick] (c) edge (e);
\draw[thick] (c) edge (f);
\draw[thick] (c) edge (g);
\draw[thick] (c) edge (h);
\draw[thick] (c) edge (i);
\draw[fill] (c) circle (1.5pt);
\draw[lightgray, fill] (d) circle (1.5pt);
\draw[lightgray, fill] (e) circle (1.5pt);
\draw[lightgray, fill] (f) circle (1.5pt);
\draw[lightgray, fill] (g) circle (1.5pt);
\draw[lightgray, fill] (h) circle (1.5pt);
\draw[lightgray, fill] (i) circle (1.5pt);
\end{tikzpicture}
\caption{A pumpkin and a star graph. The set $M (\Gamma)$ in the equilateral case is marked in grey.}
\label{fig:pumpkinStar}
\end{figure}
If $\Gamma$ is not equilateral and $E \geq 3$ then we may (still) find a basis of eigenfunctions each of which is either reflection or rotation symmetric with respect to the midpoints of each edge. Assume that $\Gamma$ has a unique longest edge and $\mu_2 (\Gamma)$ is simple with eigenfunction not vanishing identically on any edge (as is the case generically \cite{BeLi17}); then as the eigenfunction is either reflection or rotation symmetric, $M_{\loc}$ must correspondingly consist either of the set of midpoints of each edge (the reflection symmetric case), \emph{or} the two points on the longest edge at distance $\pi/\sqrt{\mu_2(\Gamma)}$ from each other and equidistant from the midpoint of the edge (the rotationally symmetric case).
\end{example}

\begin{example}
A {\em star graph} is a graph with a ``star vertex'' $v_0$ of degree $E$ and $E$ vertices of degree one, see the right-hand side of Figure~\ref{fig:pumpkinStar}. In the equilateral case with $L (e) = 1$ for each $e$ the lowest positive eigenvalue equals $\mu_2 (\Gamma) = \pi^2 / 4$. The corresponding eigenspace is $E - 1$-dimensional and is spanned by functions each being supported on a pair of two edges; each of these basis functions has its minimum at one boundary vertex and its maximum at another. Thus $M = M_\loc = \partial \Gamma$. For non-equilateral star graphs we will see in Corollary~\ref{cor:treelocalhotspots} below that $M \subset M_\loc \subset \partial \Gamma$, but equality of these sets does not necessarily hold.
\end{example}

\begin{example}
\label{ex:flower}
A {\em flower graph} consists of one vertex and a number of loops (``petals'') attached to this vertex; cf.\ the left-hand side of Figure~\ref{fig:flowerComplete}.
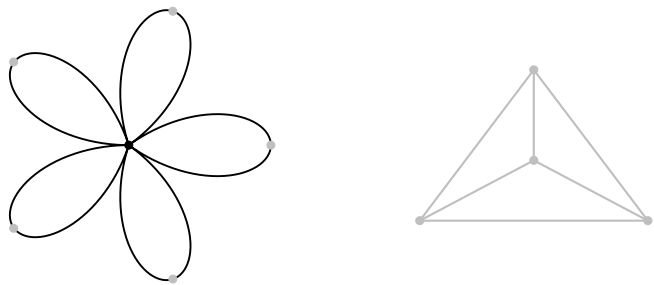
\begin{figure}[h]
\begin{center}
% flower
\begin{minipage}[c]{5cm}
\begin{tikzpicture}[scale=0.6]
\begin{polaraxis}[grid=none, axis lines=none]
\addplot[very thick,mark=none,domain=0:360,samples=300] { abs(cos(5*x/2))};
\draw[fill] (0,0) circle (2.5pt);
\draw[lightgray, fill] (100,0) circle (2.5pt);
\draw[lightgray, fill] (31,95) circle (2.5pt);
\draw[lightgray, fill] (-81,59) circle (2.5pt);
\draw[lightgray, fill] (-81,-59) circle (2.5pt);
\draw[lightgray, fill] (31,-95) circle (2.5pt);
\end{polaraxis}
\end{tikzpicture}
\end{minipage} \qquad
% complete graph
\begin{minipage}{3.75cm}
\begin{tikzpicture}
\coordinate (c) at (6,-1);
\coordinate (d) at (9,-1);
\coordinate (e) at (7.5,-0.2);
\coordinate (f) at (7.5,1);
\draw[lightgray, fill] (c) circle (1.5pt);
\draw[lightgray, fill] (d) circle (1.5pt);
\draw[lightgray, fill] (e) circle (1.5pt);
\draw[lightgray, fill] (f) circle (1.5pt);
\draw[lightgray, thick] (c) edge (d);
\draw[lightgray, thick] (c) edge (e);
\draw[lightgray, thick] (c) edge (f);
\draw[lightgray, thick] (d) edge (e);
\draw[lightgray, thick] (d) edge (f);
\draw[lightgray, thick] (e) edge (f);
\end{tikzpicture}
\end{minipage}
\end{center}
\caption{A flower graph and a complete graph. The hot spots for the equilateral case are marked in grey.}
\label{fig:flowerComplete}
\end{figure}
A flower graph with exactly two petals is called a {\em figure-8 graph}. For the equilateral flower with $E \geq 2$ petals of length one each we have $\mu_2 (\Gamma) = \pi^2$ with the corresponding eigenfunctions equal to $\pm \sin (\pi x)$ or constantly zero on each edge. On an arbitrary, not necessarily equilateral flower graph with $E \geq 2$ one has $\mu_2 (\Gamma) < \frac{4 \pi^2}{L (e)^2}$ for each edge $e$ (this follows from the strict inequality statement of \cite[Theorem~3.10(2)]{BKKM18}, since $\Gamma$ can be formed by attaching the other $E-1$ edges as a pendant to the loop $e$, which has an eigenfunction which does not vanish at the point of attachment). Thus the continuity condition $\psi_e (0) = \psi_e (L (e))$ implies that the restriction of any corresponding eigenfunction to any edge $e$ is (reflection) symmetric with respect to the midpoint of the edge. Thus the midpoint is the only critical point on the edge and $M_\loc$ (and hence $M$) is contained in the set of edge midpoints on any flower graph.
\end{example}

\begin{example}
\label{ex:complete}
A {\em complete graph} is a graph such that for each pair of distinct vertices there is exactly one edge connecting the two and every vertex has degree $V - 1$, see the right-hand side of Figure~\ref{fig:flowerComplete}. In the equilateral case with $L (e) = 1$ for each $e \in E$ we will show in Proposition~\ref{prop:m-gamma-examples} that $M = M_\loc = \Gamma$ as long as $V \geq 3$ (see also Lemma~\ref{lem:complete-ef} for a description of those eigenfunctions associated with $\mu_2(\Gamma)$ which will be of principal importance for us).
\end{example}

\begin{example}
\label{ex:lasso}
A {\em lasso graph}, also known as a {\em lollipop} or a {\em tadpole graph}, is a graph consisting of a loop and a pendant edge attached to each other, see Figure~\ref{fig:lolly}.
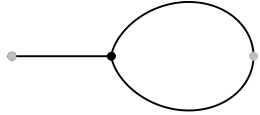
\begin{figure}[h]
\vspace*{-10mm}
\begin{tikzpicture}[scale=0.6]
\begin{polaraxis}[grid=none, axis lines=none]
\addplot[very thick,mark=none,domain=-90:90,samples=300] {cos(x)^2};
\draw[fill] (0,0) circle (2.5pt);
\draw[fill] (-700,0) circle (2.5pt);
\draw[very thick] (0,0) edge (-700,0);
\draw[lightgray, fill] (-700,0) circle (2.5pt);
\draw[lightgray, fill] (1000,0) circle (2.5pt);
\end{polaraxis}
\end{tikzpicture}\vspace*{-10mm}
\caption{A lasso graph with its hot spots in grey.}
\label{fig:lolly}
\end{figure}
Such a graph is a special case of a so-called {\em pumpkin chain}; the lowest eigenvalue of the standard Laplacian is simple and, since each of the two ``pumpkins'' in the chain is equilateral, the corresponding eigenfunction is monotonic along the chain and takes its maximum and minimum at the ``end points'' of the chain only, i.e.\ on the vertex of degree one and the midpoint of the loop; see~\cite[Section~5]{BKKM18} (in particular Lemma~5.5 there) for more details.
\end{example}

\section{Summary of results}
\label{sec:main}

We will now present the results of this article. Their proofs will be given in later sections. Throughout the paper, we will \emph{always} make the following assumption on our graph $\Gamma$, without exception. Thus, whenever we speak of ``a graph $\Gamma$'' or ``any graph $\Gamma$'', we always mean graphs $\Gamma$ satisfying this assumption.

\begin{assumption}\label{ass}
The metric graph $\Gamma$ has a finite set of vertices and a finite set of edges, and the length of every edge is finite; in particular, $\Gamma$ is a compact metric space. Moreover, $\Gamma$ is connected.
\end{assumption}

\subsection{On the number of hot spots}
\label{sec:number-summary}

We first wish to consider the number of hot spots that a graph can have, since the set of these distinguished points may be very large. Indeed, as already claimed in Section~\ref{sec:examples}, one may find examples $\Gamma$ other than cycles, for which $M = \Gamma$.

\begin{proposition}
\label{prop:m-gamma-examples}
We have that $M=M_{\loc}=\Gamma$ whenever $\Gamma$ is an equilateral pumpkin (cf.\ Example~\ref{ex:pumpkin}) or an equilateral complete graph with $V \geq 3$ (cf.\ Example~\ref{ex:complete}).
\end{proposition}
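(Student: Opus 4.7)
The plan is to exhibit, for each point $x_0 \in \Gamma$, an eigenfunction $\psi$ for $\mu_2(\Gamma)$ whose global maximum on $\Gamma$ is attained at $x_0$. This immediately yields $M = \Gamma$, and since $M \subset M_{\loc} \subset \Gamma$ always, also $M_{\loc} = \Gamma$.

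For the equilateral pumpkin I would use the description already given in Example~\ref{ex:pumpkin}: parametrising each of the $E \geq 2$ edges $e_j$ from $v_1$ at $x=0$ to $v_2$ at $x=1$, the eigenspace of $\mu_2 = \pi^2$ consists of those tuples $\psi_{e_j}(x) = a\cos(\pi x) + b_j\sin(\pi x)$ with $a \in \R$ and $\sum_j b_j = 0$. Given $x_0 \in e_1$ at position $t \in [0,1]$, the choice $a = \cos(\pi t)$, $b_1 = \sin(\pi t)$ and $b_j = -\sin(\pi t)/(E-1)$ for $j \geq 2$ gives $\psi_{e_1}(x) = \cos(\pi(x-t))$, with global maximum $1$ attained at $x = t$. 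For $j \geq 2$ the amplitude of $\psi_{e_j}$ is $\sqrt{\cos^2(\pi t) + \sin^2(\pi t)/(E-1)^2} \leq 1$; in the borderline case $E = 2$ one notes that the only critical point of $\psi_{e_2}$ in $(0,1)$ is a minimum, so that the supremum of $\psi_{e_2}$ there is $|\cos(\pi t)| < 1$ for $t \in (0,1)$. Hence $x_0 \in M$.

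For the equilateral complete graph $K_V$ I would treat $V = 3$ and $V \geq 4$ separately. If $V = 3$, then $K_3$ is a cycle graph and the conclusion follows from Example~\ref{ex:cycle}. For $V \geq 4$, let $k \in (0,\pi)$ be the unique solution of $\cos k = -1/(V-1)$; an ansatz $A\cos(kx)+B\sin(kx)$ on each edge combined with continuity and Kirchhoff shows that $\mu_2 = k^2$, and that the eigenspace is $(V-1)$-dimensional, parametrised by vertex values $(a_1,\ldots,a_V)\in\R^V$ with $\sum_i a_i = 0$; explicitly,
\[\psi_{ij}(x) = a_i\cos(kx) + \frac{(V-1)a_j + a_i}{\sqrt{V(V-2)}}\sin(kx)\]
on the edge $e_{ij}$ oriented from $v_i$ to $v_j$. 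By $S_V$-symmetry, it suffices to show that every point of a fixed edge $e_{12}$ lies in $M$. For $x_0 = v_1$ the choice $a_1 = V-1$, $a_j = -1$ for $j \geq 2$ yields $\psi_{1j}(x) = (V-1)\cos(kx)$ on each edge incident to $v_1$, with maximum $V-1$ attained at $v_1$, while the amplitude on every other edge is $\sqrt{2(V-1)/(V-2)} < V-1$ for $V \geq 4$. For $x_0$ interior to $e_{12}$ at position $t$, I would restrict to the $2$-dimensional $S_{V-2}$-symmetric subspace in which $a_3 = \ldots = a_V = -(a_1+a_2)/(V-2)$ and pick the unique direction in it (up to sign) for which $\psi_{12}'(t) = 0$; a direct computation then shows that $|\psi_{12}(t)|$ equals the full amplitude of $\psi_{12}$, so that $x_0$ is a global extremum of $\psi_{12}$ on $e_{12}$.

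The main obstacle will be the final amplitude-and-phase comparison needed to ensure that $|\psi|$ does not exceed $|\psi(x_0)|$ anywhere else on $\Gamma$. By $S_{V-2}$-symmetry the remaining edges fall into three classes -- $e_{1j}$ and $e_{2j}$ for $j \geq 3$, and $e_{ij}$ for $i,j \geq 3$ -- reducing the task to three explicit trigonometric inequalities. The delicate point is that in some configurations the amplitude on one of these edges coincides with that on $e_{12}$ (this already happens for $K_4$ at $t = 1/2$, where $e_{34}$ shares the amplitude of $e_{12}$); in such cases one must use the phases imposed by the $S_{V-2}$-symmetric choice of $(a_1,a_2)$ to show that the critical point of the relevant sinusoid in $[0,1]$ is a minimum (or lies outside $[0,1]$), so that the supremum of $\psi$ along that edge is strictly less than $|\psi(x_0)|$.
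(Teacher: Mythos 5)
Your treatment of the equilateral pumpkin is complete and correct, and is essentially an explicit reparametrisation of the paper's argument: where the paper interpolates between the eigenfunction $\cos(\pi x)$ (on every edge) and one supported on two edges via Lemma~\ref{lem:Mgeneration}, you write down the resulting eigenfunction $\cos(\pi(x-t))$ directly and spread the compensating $\sin$-coefficients over all remaining edges; the amplitude bound $\sqrt{\cos^2(\pi t)+\sin^2(\pi t)/(E-1)^2}\le 1$ together with your separate discussion of $E=2$ does close that half. Your eigenspace description for the equilateral complete graph is also correct (continuity and Kirchhoff hold for your $\psi_{ij}$ precisely when $\sum_i a_i=0$, using $\sin k=\sqrt{V(V-2)}/(V-1)$), although the assertion that $k^2$ is the \emph{second} eigenvalue, rather than merely an eigenvalue below $\pi^2$, needs a sentence of justification (e.g.\ via the equilateral-graph correspondence with the discrete transition matrix, whose spectrum on the complete graph is $\{1,-1/(V-1)\}$; the paper only needs $\mu_2<\pi^2$ and cites \cite{KKMM16} for it).

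The genuine gap is in the complete-graph case, and it is exactly the step you flag as ``the main obstacle'': for an interior point $t$ of $e_{12}$ you select the $S_{V-2}$-symmetric eigenfunction with $\psi_{12}'(t)=0$, but you never verify that its supremum over the remaining edges does not exceed $\psi_{12}(t)$. This is not a routine omission --- it is the entire content of the claim, since nothing a priori guarantees that this particular eigenfunction attains its global maximum on $e_{12}$; the three amplitude-and-phase inequalities you defer must be proved for all $V\ge4$ and all $t\in(0,1)$, including the borderline equal-amplitude configurations you yourself identify. Note also that your candidate is genuinely different from the paper's: the interpolant $\psi_0+\alpha_y\psi_1$ used there is invariant under permutations fixing $v_1$ and one further vertex $v_3$ (i.e.\ symmetric in $\{v_2,v_4,\dots,v_V\}$), not in $\{v_3,\dots,v_V\}$, so the paper's verification cannot simply be imported into your setup. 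The paper avoids the trigonometric case analysis altogether by first proving a structural result (Lemma~\ref{lem:complete-ef}: an eigenfunction with maximum at a vertex, minimum at the midpoints of all non-incident edges, and no other critical points, obtained from permutation symmetry plus the non-disconnection Theorem~\ref{thm:noDisconnect}) and then running Lemma~\ref{lem:Mgeneration} with a soft two-case comparison on each remaining edge. Either carry out your three inequalities in full or switch to such a qualitative argument; as it stands the complete-graph half is a plausible plan rather than a proof.
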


This proposition says that in a certain sense all points on any equilateral pumpkin or equilateral complete graph are equally connected. However, these graphs are easily seen to be exceptional in the sense that for a ``generic'' graph, $M$ is finite. For future reference, let us explain this in more detail: since $\mu_2 (\Gamma)$ is generically simple \cite{F05} (cf.\ Definition~\ref{def:generic}), it follows immediately that $M$, in fact $M_{\loc}$, is generically finite. However, we can say more. Firstly, in the case of trees, $M_{\loc}$ is \emph{always} finite. 

\begin{proposition}
\label{prop:tree-m-finite}
Suppose $\Gamma$ is a tree. Then $M_{\loc}$, and hence also $M$, are finite sets.
\end{proposition}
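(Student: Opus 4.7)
The plan is to reduce the proposition to the following dimension bound: for every edge $e$ of a tree $\Gamma$, the space
\[
  V_e := \{\psi|_e : \psi \in \ker(-\Delta_\Gamma - \mu_2(\Gamma))\}
\]
of restrictions of $\mu_2$-eigenfunctions to $e$ satisfies $\dim V_e \le 1$. Granting this, every such eigenfunction restricts on each edge $e$ to a scalar multiple of one fixed sinusoid of frequency $\sqrt{\mu_2(\Gamma)}$, which admits only finitely many critical points in $[0,L(e)]$. Since $\Gamma$ has finitely many edges and finitely many vertices, $M_{\loc}$ is then contained in a finite union of finite sets and is therefore finite.

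To prove the dimension bound I would argue by contradiction. Assume $\dim V_e \ge 2$ for some edge $e$, and suppose first that $\deg o(e) \ge 2$ (if $\deg o(e) = 1$ then the Kirchhoff condition at $o(e)$ reduces to $\psi_e'(0) = 0$ for every eigenfunction, one-dimensionalising $V_e$ directly; if in addition $\deg t(e) = 1$, $\Gamma$ is a single edge for which Example~\ref{ex:path} gives the result). Since the Cauchy-data map $\psi \mapsto (\psi(o(e)), \psi_e'(0))$ sends $V_e$ isomorphically into $\R^2$, we can select eigenfunctions $\psi,\tilde\psi$ of $-\Delta_\Gamma$ for $\mu_2(\Gamma)$ with
\[
  \psi(o(e)) = 1,\ \psi_e'(0) = 0, \qquad \tilde\psi(o(e)) = 0,\ \tilde\psi_e'(0) = 1.
\]
Because $\Gamma$ is a tree, $e$ is a bridge, and I let $T_1$ denote the closure of the component of $\Gamma \setminus e$ containing $o(e)$, which is itself a metric tree.

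Now I would apply Green's identity on $T_1$ to the pair $\psi|_{T_1}, \tilde\psi|_{T_1}$. Since both functions satisfy $-u'' = \mu_2(\Gamma) u$ on every edge of $T_1$, the bulk integral $\int_{T_1}[\psi(-\Delta\tilde\psi) - (-\Delta\psi)\tilde\psi]\,\d x$ vanishes identically. Denoting by $K^{T_1}(\cdot,v)$ the sum of outward derivatives at a vertex $v$ along the edges of $T_1$, the boundary contribution at $v$ equals $\psi(v) K^{T_1}(\tilde\psi,v) - \tilde\psi(v) K^{T_1}(\psi,v)$. For $v \neq o(e)$ the edges of $T_1$ at $v$ coincide with the edges of $\Gamma$ at $v$, so the standard condition inherited from $\Gamma$ forces $K^{T_1}(\psi,v) = K^{T_1}(\tilde\psi,v) = 0$ and the contribution vanishes. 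At $o(e)$, however, the Kirchhoff condition in $\Gamma$ becomes $K^{T_1}(\psi,o(e)) + \psi_e'(0) = 0$ and $K^{T_1}(\tilde\psi,o(e)) + \tilde\psi_e'(0) = 0$, so $K^{T_1}(\psi,o(e)) = 0$ but $K^{T_1}(\tilde\psi,o(e)) = -1$. The contribution at $o(e)$ is therefore $1\cdot(-1) - 0\cdot 0 = -1 \neq 0$, contradicting the vanishing of the total boundary sum. Hence $\dim V_e \le 1$.

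The hardest part is the careful bookkeeping of signs and the precise translation of the full-graph Kirchhoff condition at the severed vertex $o(e)$ into an \emph{inhomogeneous} condition on the subtree $T_1$: it is exactly this inhomogeneity, produced by the non-vanishing flux of $\tilde\psi$ across $e$ at $o(e)$, which drives the contradiction.
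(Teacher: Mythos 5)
Your proof is correct, and it takes a genuinely different route from the paper's. The paper obtains the proposition as an immediate corollary of a \emph{location} result: Theorem~\ref{thm:noDisconnect} (disconnecting the nonzero local maxima of a $\mu_2$-eigenfunction cannot disconnect the graph, proved by a variational cut-and-paste test-function argument) yields $M_{\loc}\subset\partial\Gamma\cup\interior\cD_\Gamma$ (Theorem~\ref{thm:doublyConnectedPart}), hence $M_{\loc}\subset\partial\Gamma$ for trees (Corollary~\ref{cor:treelocalhotspots}), which is finite. You instead prove the purely local statement that the restriction of the $\mu_2$-eigenspace to each edge is at most one-dimensional, so that all nonzero local extrema on an edge are critical points of a single sinusoid. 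Your Green's identity argument across the bridge is sound: the Cauchy-data normalisation at $o(e)$ is legitimate because the data map is injective on the two-dimensional solution space of $-u''=\mu_2 u$, the boundary terms at all vertices of $T_1$ other than $o(e)$ vanish by the inherited Kirchhoff conditions, and the inhomogeneous flux condition at $o(e)$ produces the nonzero term $\pm 1$ (the overall sign convention in your Green's formula is immaterial to the contradiction); the degenerate cases $\deg o(e)=1$ are correctly dispatched. What you lose relative to the paper is the location information -- your argument gives finiteness of $M_{\loc}$ but not $M_{\loc}\subset\partial\Gamma$; what you gain is a short, self-contained ODE argument that avoids the surgery machinery, plus a lemma of independent interest: $\dim V_e\le 1$ holds for \emph{any} bridge of \emph{any} graph, which controls the hot spots lying on bridges and is pertinent to (though does not settle, since it says nothing about non-bridge edges) the conjecture in Section~\ref{sec:conj-number} that graphs with a bridge have finite $M_{\loc}$.
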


Moreover, it is not difficult to show that if two points of $\Gamma$ sufficiently close to each other are in $M_{\loc}$, then so are all points in between them. This leads to the following dichotomy.

\begin{proposition}
\label{prop:m-finite-or-uncountable}
The set $M_{\loc}$ always has a finite number of connected components; in particular, it is either finite or uncountable.
\end{proposition}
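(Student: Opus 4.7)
The plan is to reduce $M_{\loc}$ edge-by-edge, exploiting the finite-dimensionality of the eigenspace $\mathcal{E}$ associated with $\mu_2(\Gamma)$ and the fact that on each edge $e$ every eigenfunction is a sinusoid of fixed frequency $k := \sqrt{\mu_2(\Gamma)}$. After fixing a parametrisation of each edge by $[0,L(e)]$, I would introduce the linear restriction map
\begin{equation*}
\iota_e : \mathcal{E} \to \mathbb{R}^2, \qquad \psi \mapsto (A_e,B_e),
\end{equation*}
where $\psi_e(x) = A_e\cos(kx)+B_e\sin(kx)$, and set $V_e := \ran \iota_e \subseteq \mathbb{R}^2$, so that $\dim V_e \in\{0,1,2\}$. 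The behaviour of $M_{\loc}$ on $\interior(e)$ turns out to depend only on this dimension.

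The main step is then a trichotomy for each edge $e$. If $\dim V_e = 0$, every eigenfunction vanishes on $e$, so $M_{\loc}\cap\interior(e) = \emptyset$. If $\dim V_e = 1$, each eigenfunction restricted to $e$ is a scalar multiple of a single nonzero sinusoid $\psi_e^0$; its critical points in $\interior(e)$ form a finite set, and any potential interior local extremum must lie in this set (with an appropriate sign of the scalar ensuring $\psi(x_0)\neq 0$, using Lemma~\ref{lem:maxPos}), so $M_{\loc}\cap\interior(e)$ is finite. If $\dim V_e = 2$, then for any $x_0\in\interior(e)$ there exists $\psi\in\mathcal{E}$ with $\psi_e(x) = \cos(k(x-x_0))$; since $\psi_e(x_0) = 1$ and $\psi_e''(x_0) = -k^2 < 0$, the point $x_0$ is a strict local maximum of $\psi$ in some ball $B_\varepsilon(x_0)\subset\interior(e)$, hence $x_0\in M_{\loc}$. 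Consequently $M_{\loc}\cap\interior(e) = \interior(e)$ in this case.

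Since $\cV$ and $\cE$ are both finite, combining the trichotomy across all edges with the trivial bound $|M_{\loc}\cap\cV|\le |\cV|$ shows that $M_{\loc}$ is a finite union of singletons (vertices or isolated interior points) together with a finite number of open arcs $\interior(e)$ (for those $e$ with $\dim V_e = 2$), which can only be glued to one another at common vertices. Therefore $M_{\loc}$ has only finitely many connected components. Moreover each such component is either a single point or contains one of these open arcs, and in the latter case is uncountable, which yields the stated dichotomy. The main obstacle is the surjectivity implicit in the $\dim V_e = 2$ case: one must use that every admissible sinusoidal profile on $e$ actually extends to a global eigenfunction on all of $\Gamma$, which is why it is essential to work with the full, possibly higher-dimensional eigenspace rather than with a single function, but this falls out of the definition of $V_e$ as the image of $\iota_e$ once the set-up is in place.
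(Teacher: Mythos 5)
Your proof is correct, and it takes a genuinely different route from the paper's. The paper first establishes a two-point generation lemma (Lemma~\ref{lem:Mgeneration}): if $x_1,x_2\in M_{\loc}$ lie on the same edge at edge-distance at most $\pi/(2\sqrt{\mu_2(\Gamma)})$, then the entire segment between them lies in $M_{\loc}$; this is proved by exhibiting, for each intermediate point $y$, an explicit positive combination $\psi_0+\alpha_y\psi_L$ of two eigenfunctions whose unique maximum on the segment is at $y$. Proposition~\ref{prop:m-finite-or-uncountable} then follows by a packing argument: distinct connected components of $M_{\loc}\cap e$ must be at edge-distance at least $\pi/(2\sqrt{\mu_2(\Gamma)})$ from one another, so each edge carries only finitely many. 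Your trichotomy on $\dim V_e$ replaces both steps at once and in fact yields a sharper structural conclusion --- $M_{\loc}\cap\interior (e)$ is either finite or \emph{all} of $\interior(e)$ --- which the paper's argument does not give directly and which bears on the Question in Section~\ref{sec:conj-number} of whether $M$ must be finite or all of $\Gamma$. What the paper's lemma buys in exchange is reusability: the explicit control over the location of the unique maximum of $\psi_0+\alpha_y\psi_L$ and over the sign of $\alpha_y$ is exactly what drives the proof of Proposition~\ref{prop:m-gamma-examples} about \emph{global} maxima on equilateral pumpkins and complete graphs, where your purely local argument would not suffice. Two small points are worth making explicit in your write-up: in the case $\dim V_e=1$ the nonvanishing requirement in the definition of $M_{\loc}$ is automatic, since a nontrivial sinusoid of frequency $k$ takes the value $\pm\sqrt{A_e^2+B_e^2}\neq 0$ at each of its critical points; and for $x_0$ interior to $e$ a sufficiently small metric ball $B_\varepsilon(x_0)$ is an interval contained in $\interior(e)$, so a strict local maximum of $\psi_e$ at $x_0$ is indeed a local maximum of $\psi$ on $\Gamma$.
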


We expect that Proposition~\ref{prop:m-finite-or-uncountable} actually remains true for $M$ in place of $M_{\loc}$; see Conjecture~\ref{conj:m-size}. We next give a stronger version of the statement that $M$ is generically finite: the next theorem states that, up to a small perturbation of the graph which does not essentially change the topology, the idea that each graph should have one maximum (hottest spot) and one minimum (coldest spot) is correct. More precisely, the unperturbed graph has the same underlying discrete graph as the perturbed graph, if we allow that some of the edge lengths of the former may be zero. Here and in what follows we use $|\cdot |$ to denote the cardinality of a set.

\begin{theorem}
\label{thm:m-generically-2}
Given $\Gamma$, for each $\eps > 0$ there exists a graph $\Gamma_\eps$ obtained from $\Gamma$ by modifying the length of each edge by less than $\eps$ and possibly attaching finitely many pendant edges of length less than $\eps$ to points in 
% the doubly connected part of 
$\Gamma$ such that $\mu_2 (\Gamma_\eps)$ is simple, $M (\Gamma_\eps) \subset \partial \Gamma_\eps$ holds, and the corresponding eigenfunction has exactly one minimum and one maximum, i.e., $|M (\Gamma_\eps)| = 2$.
\end{theorem}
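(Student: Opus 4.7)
The plan is a two-step construction. First, I would perturb the edge lengths by at most $\eps/2$ to make $\mu_2$ simple; second, I would attach short pendant edges at the maxima and minima of the resulting eigenfunction, which serves to ``pull'' these hot spots onto the free ends of the pendants, thereby placing them at degree-one vertices of the new graph.

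For the first step, Friedlander's genericity result directly gives a graph $\Gamma^{(1)}$ obtained from $\Gamma$ by perturbing each edge length by at most $\eps/2$ such that $\mu_2(\Gamma^{(1)})$ is simple; let $\psi^{(1)}$ denote its (real, up to sign unique) eigenfunction, and let $M^+, M^-$ denote the (finite) sets of global maxima and minima of $\psi^{(1)}$. For the second step, at each $z \in M^+ \cup M^-$ which is not already a degree-one vertex of $\Gamma^{(1)}$, I would attach a pendant edge of length $\delta_z \in (0, \eps/2)$; the $\delta_z$'s are chosen pairwise distinct and sufficiently small. Denote the resulting graph by $\Gamma_\eps$ and set $\mu_\eps := \mu_2(\Gamma_\eps)$. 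By the continuity results of the appendix, for small pendant lengths $\mu_\eps$ is simple (the gap $\mu_2(\Gamma^{(1)}) < \mu_3(\Gamma^{(1)})$ persists under small pendant perturbation), with eigenfunction $\psi_\eps$ converging uniformly on $\Gamma^{(1)}$ to $\psi^{(1)}$.

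On each pendant $p_z$ of length $\delta_z$, parametrised by arclength $s \in [0, \delta_z]$ with $s = 0$ the free end, the restriction $\psi_\eps|_{p_z}$ satisfies $-u'' = \mu_\eps u$ with a Neumann condition at $s = 0$, so
\[
\psi_\eps|_{p_z}(s) = A_z \cos(\sqrt{\mu_\eps}\, s), \qquad A_z = \frac{\psi_\eps(z)}{\cos(\sqrt{\mu_\eps}\, \delta_z)}.
\]
Thus $|A_z| > |\psi_\eps(z)|$ and $A_z$ is strictly monotone in $\delta_z$ for $\sqrt{\mu_\eps}\, \delta_z < \pi/2$, so pairwise distinct $\delta_z$'s among $z \in M^+$ yield distinct $A_z$'s, with a unique largest $A_{z^*}$ (analogously for $M^-$, giving a unique smallest $A_{z_*}$). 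Moreover, the Kirchhoff condition at each attachment point $z$ forces the sum of outward derivatives of $\psi_\eps$ at $z$ on the original edges to equal the negative of the outward derivative on $p_z$, which is of order $\psi_\eps(z)\,\mu_\eps\,\delta_z > 0$ for $z \in M^+$. Hence, on each original edge incident to $z$, the derivative of $\psi_\eps$ at $z$ in the direction away from $z$ is strictly negative, so $\psi_\eps$ is strictly decreasing away from $z$ on $\Gamma^{(1)}$ in a neighbourhood of $z$; the local maximum of $\psi_\eps$ on $\Gamma^{(1)}$ near $z$ is therefore attained at $z$ itself, with value $\psi_\eps(z) < A_z$. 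Together with the strict separation $\psi^{(1)}(y) < \max \psi^{(1)} - \eta$ for $y$ bounded away from $M^+$ and uniform convergence of $\psi_\eps$ to $\psi^{(1)}$, this will show that the global maximum of $\psi_\eps$ on $\Gamma_\eps$ is attained uniquely at the free end of the pendant at $z^*$; an analogous argument handles the minimum, giving $M(\Gamma_\eps) \subset \partial\Gamma_\eps$ and $|M(\Gamma_\eps)| = 2$.

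The main technical obstacle I anticipate is controlling, quantitatively, the comparison between the pendant-end value $A_{z^*}$ and the values $\psi_\eps(z_j)$ for the other $z_j \in M^+$: a priori $|\psi_\eps(z_j) - \psi_\eps(z^*)|$ and $|A_{z^*} - \psi_\eps(z^*)|$ are of comparable, small order as the pendant lengths tend to zero, so the strict inequality $A_{z^*} > \psi_\eps(z_j)$ requires a refined expansion exploiting the fact that $\psi^{(1)}$ has a strict critical point at each $z \in M^+$ (so that the leading perturbation of $\psi_\eps(z)$ at such points is smaller than one would naively expect from a generic $L^\infty$-bound on $\psi_\eps - \psi^{(1)}$) together with a careful, hierarchical choice of the $\delta_z$'s so that $A_{z^*}$ has the dominant pendant boost. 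The necessary quantitative convergence estimates for these refined comparisons are supplied by the appendix.
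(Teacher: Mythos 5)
Your high-level strategy --- first perturb the edge lengths to make $\mu_2$ simple, then attach short pendant edges at the interior extrema to pull them onto degree-one vertices --- is the same as the paper's, but the way you execute the second step leaves a genuine gap, and it is precisely the one you flag at the end. Attaching a pendant of length $\delta_z$ at a point $z$ with $\psi^{(1)}(z)\neq 0$ moves the eigenvalue, and hence the eigenfunction, at \emph{first} order in $\delta_z$: the appendix only gives $\sup_\Gamma|\psi_\eps-\cJ_\ell\psi^{(1)}|\to 0$, and in general $\psi_\eps(z_j)-\psi^{(1)}(z_j)=O(\delta)$ with a nonzero first-order coefficient. The pendant ``boost'' $A_z-\psi_\eps(z)=\psi_\eps(z)\bigl(\sec(\sqrt{\mu_\eps}\,\delta_z)-1\bigr)$ is only $O(\delta_z^2)$. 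Consequently the ordering of the free-end values $A_{z_j}$ is dictated by the uncontrolled $O(\delta)$ shifts of the attachment values $\psi_\eps(z_j)$ (which were exactly tied before the perturbation), not by your hierarchical choice of the $\delta_z$; and the proposed rescue --- that the perturbation of $\psi_\eps$ is of higher order at critical points of $\psi^{(1)}$ --- is not true in general, since the eigenfunction correction is essentially a combination of the \emph{other} eigenfunctions (plus a renormalisation of $\psi^{(1)}$) and has no reason to vanish to second order at critical points of $\psi^{(1)}$. A smaller but real problem is the claim that the Kirchhoff condition at $z$ makes each \emph{individual} outward derivative on the original edges negative: Kirchhoff only controls their sum, so monotone decrease away from $z$ on every incident edge does not follow.

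The paper's proof removes the perturbative comparison entirely. At each interior extremum $v$ it \emph{shortens} every incident edge by $x_0$ and attaches a pendant of the exactly matched length $\eta(x_0)$ (determined by requiring the Neumann condition at the free end), so that the original eigenfunction, which equals $\psi(v)\cos(kx)$ on each incident edge, extends explicitly to an eigenfunction of the new graph with the \emph{same} eigenvalue $k^2=\mu_2(\Gamma)$; the values at all other extrema are therefore literally unchanged, the new free end strictly dominates, and the appendix is invoked only to confirm that this eigenvalue is still the second one and still simple. Uniqueness of the maximum and minimum is then obtained by a separate statement (Lemma~\ref{lem:boundary}), proved by a length-preserving redistribution of two pendant lengths combined with a vertex-gluing and eigenvalue-counting argument, which makes the boundary values of the eigenfunction pairwise distinct. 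To salvage your version you would need either such an exact surgery or a genuinely quantitative first-order expansion of $\psi_\eps(z_j)$ in the pendant lengths --- neither of which the appendix supplies.
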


We expect that Theorem~\ref{thm:m-generically-2} can be sharpened in the sense that the property $|M| = 2$ holds generically, that is, one may avoid attaching additional short pendant edges; see Conjecture~\ref{conj:Mgeneric} below.

For completeness' sake, we observe explicitly that $M$ can take on any finite size.

\begin{proposition}
\label{prop:number}
For any $n \geq 2$, there is a graph $\Gamma$ for which $|M| = |M_{\loc}| = n$.
\end{proposition}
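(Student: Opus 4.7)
The plan is to take $\Gamma$ to be the equilateral star graph with $n$ pendant edges of unit length (so that for $n=2$ this is an interval of length $2$, and for $n \geq 3$ a genuine star; cf.\ the star graph example earlier in the text). A direct computation on each edge $e_i$, parametrised so that $t=0$ corresponds to the central vertex $v_0$ and $t=1$ to the leaf $v_i$, shows that the eigenvalue equation $-\psi'' = \lambda \psi$ together with the Neumann condition $\psi_{e_i}'(1)=0$ at $v_i$, continuity at $v_0$, and the Kirchhoff condition $\sum_i \psi_{e_i}'(0) = 0$, yields $\mu_2(\Gamma) = \pi^2/4$ with eigenspace of dimension $n-1$ spanned by functions of the form
\begin{equation*}
    \psi_{e_i}(t) = B_i \sin\!\left(\tfrac{\pi t}{2}\right), \qquad \sum_{i=1}^n B_i = 0.
\end{equation*}
(The continuity condition at $v_0$ is automatic since $\sin 0 = 0$.)

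Next I would rule out all candidates for hot spots other than the leaves $v_1,\dots,v_n$. On the interior of any edge, the function $B_i \sin(\pi t/2)$ is strictly monotonic on $(0,1)$, so no local extrema can occur there. At the central vertex $v_0$, every eigenfunction vanishes, and since the definition of $M_{\loc}$ explicitly requires $\psi(x) \neq 0$ (and in any case Lemma~\ref{lem:maxPos} excludes zero values), the centre cannot lie in $M_{\loc}$. Hence $M_{\loc} \subseteq \{v_1,\dots,v_n\}$.

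For the reverse inclusion (and to show $M$ contains every leaf), for each fixed $i$ I would choose $B_i = n-1$ and $B_j = -1$ for $j \neq i$, which clearly satisfies $\sum B_k = 0$. The resulting eigenfunction takes the value $n-1$ at $v_i$ and $-1$ at each $v_j$ with $j \neq i$, hence attains its global maximum at $v_i$ and its global minimum at each $v_j$. Letting $i$ vary shows that every leaf lies in $M$. Combining the two inclusions gives $\{v_1,\dots,v_n\} \subseteq M \subseteq M_{\loc} \subseteq \{v_1,\dots,v_n\}$, so $|M| = |M_{\loc}| = n$, as required. There is no real obstacle here: the proof is entirely constructive, and the only step requiring any care is the elementary verification of the eigenfunction form on the equilateral star, which reduces to checking the vertex conditions.
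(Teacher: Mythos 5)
Your proof is correct, but it uses a different graph than the paper does. You take the \emph{equilateral} $n$-star, where $\mu_2(\Gamma) = \pi^2/4$ has multiplicity $n-1$ and the eigenspace is exactly $\{(B_1,\dots,B_n): \sum_i B_i = 0\}$ acting as $B_i\sin(\pi t/2)$ on each edge; the set $M$ is then assembled as a union over \emph{different} eigenfunctions in that eigenspace, one per leaf. (This is essentially the content of the paper's own Example on equilateral star graphs, which already asserts $M = M_{\loc} = \partial\Gamma$ there, so you could even have cited it; the only step you pass over quickly is that the permutation-symmetric eigenfunctions $\cos(k(1-t))$ have eigenvalues $m^2\pi^2$, so nothing smaller than $\pi^2/4$ occurs -- an easy check.) The paper instead takes a \emph{non-equilateral} star with one edge of length $1$ and $n-1$ edges of length $\varepsilon$, uses the even/odd symmetry decomposition and an adaptation of \cite[Lemma~5.5]{BKKM18} to show that $\mu_2$ is \emph{simple}, and exhibits a single eigenfunction with one global minimum and $n-1$ global maxima. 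Your route is more elementary and fully explicit, but it buys the conclusion only at the cost of a highly degenerate eigenvalue; the paper's construction is slightly less self-contained but yields the strictly stronger statement that $|M| = n$ is achievable with a simple $\mu_2$ and a single eigenfunction, which is the more relevant phenomenon in light of the questions on multiplicity and the size of $M$ raised in Section~\ref{sec:conjectures}.
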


We will give the proofs of Propositions~\ref{prop:m-gamma-examples} and~\ref{prop:m-finite-or-uncountable} in Section~\ref{sec:number-uncountable}, the proof of the key Theorem~\ref{thm:m-generically-2} is the subject of Section~\ref{sec:number-generic}, and an explicit construction that proves Proposition~\ref{prop:number} will be given in Section~\ref{sec:number-examples}. The proof of Proposition~\ref{prop:tree-m-finite} is based on results on the location of hot spots presented in the following section; in fact, it is an immediate consequence of Corollary~\ref{cor:treelocalhotspots} below.

\subsection{On the location of the hot spots}
\label{sec:location-summary}

In this section we study the location of hot spots on a graph and, in particular, their relation to the boundary and to the diameter of the graph. Moreover, we investigate in which regions of the graph hot spots may or may not be located.

First, we give a negative answer to the question of whether the hot spots of a metric graph need to be located on the boundary. Trivially, this cannot be true for graphs with empty boundary such as, e.g., cycles, pumpkins or complete graphs. However, the following example shows that this can fail also for graphs with nonempty boundary.

\begin{example}\label{ex:verrueckt}
Consider a ``figure-8 with a small perturbation'', more precisely, a graph $\Gamma$ with a central vertex $v_0$ and four edges attached to $v_0$, two of them loops and two edges connecting $v_0$ to a vertex of degree one each, see Figure~\ref{fig:verrueckt}. 
\begin{figure}[h]
\vspace*{-10mm}
\begin{tikzpicture}[scale=0.6]
\begin{polaraxis}[grid=none, axis lines=none]
\addplot[very thick,mark=none,domain=0:360,samples=300] {cos(x)^2};
\draw[fill] (0,0) circle (2.5pt);
\draw[fill] (0,30) circle (2.5pt);
\draw[fill] (0,-30) circle (2.5pt);
\draw[very thick] (0,0) edge (0,30);
\draw[very thick] (0,0) edge (0,-30);
\draw[fill,lightgray] (100,0) circle (2.5pt);
\draw[fill,lightgray] (-100,0) circle (2.5pt);
\end{polaraxis}
\end{tikzpicture}\vspace*{-10mm}
\caption{The perturbed ``figure-8'' graph of Example~\ref{ex:verrueckt} and its hot spots in grey.}
\label{fig:verrueckt}
\end{figure}
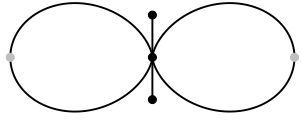
Assume that each loop has length $\pi$ and each of the ``boundary edges'' has length $\varepsilon$ for some $\varepsilon \geq 0$. In the case $\varepsilon = 0$ this is just an equilateral figure-8 with $\mu_2 (\Gamma) = 1$ (with multiplicity one); the corresponding eigenfunction has a zero at $v_0$. As $\mu_3 (\Gamma)$ is strictly larger and the eigenvalues depend continuously on $\varepsilon$ (see, e.g., the appendix), letting $\varepsilon$ grow from zero to a sufficiently small positive length will result in a graph whose third eigenvalue continues to be strictly larger than $1$. On the other hand, $1$ continues to be an eigenvalue (with the same eigenfunction, extended by zero to the boundary edges), and it follows that the second eigenvalue still equals $1$, has multiplicity one and the corresponding eigenfunction vanishes on the boundary edges. In particular, $M \cap \partial \Gamma = M_{\loc} \cap \partial \Gamma = \emptyset$.
\end{example}

By a similar perturbation-type argument one may, more generally, add boundary to any given graph without increasing $M$. In particular, one may construct graphs $\Gamma$ with an arbitrarily large number of boundary vertices such that $M \cap \partial \Gamma = M_\loc \cap \partial \Gamma = \emptyset$ holds.

On the other hand, on a graph whose hot spots lie on the boundary it is easily possible to introduce a geometric perturbation that removes the boundary but changes the position of the hot spots only slightly. A simple but prototypical example is as follows.

\begin{example}\label{ex:undUmgekehrt}
Consider the path graph $\Gamma$ given by the interval $[0, \pi]$; cf.\ Example~\ref{ex:path}. Its hot spots lie on the two boundary vertices, i.e.\ the two endpoints of the interval. However, by attaching arbitrarily small loops to the endpoints, see Figure~\ref{fig:dumbbell}, we obtain a graph with empty boundary, whose hot spots are still at the two points farthest apart from each other, namely the midpoints of the two loops. 
\begin{figure}[h]
\vspace*{-1mm}
\begin{tikzpicture}[scale=0.6]
\draw[fill] (5,0) circle (2.5pt);
\draw[fill] (-5,0) circle (2.5pt);
\draw[very thick] (-5,0) edge (5,0);
\draw[thick] (5.52,0) circle (15pt);
\draw[thick] (-5.52,0) circle (15pt);
\draw[fill,lightgray] (6.04,0) circle (2.5pt);
\draw[fill,lightgray] (-6.04,0) circle (2.5pt);
\end{tikzpicture}\vspace*{-1mm}
\caption{The perturbed path graph of Example~\ref{ex:undUmgekehrt} and its hot spots in grey.}
\label{fig:dumbbell}
\end{figure}
In fact, the resulting graph is a special case of a so-called {\em locally equilateral pumpkin chain}, see~\cite[Section~5.1]{BKKM18}, and the corresponding eigenfunction corresponding to $\mu_2$, unique up to scalar multiples, is a monotonic function of the distance to either of the grey points {\em (monotonic along the chain)} \cite[Lemma~5.1]{BKKM18}.
\end{example}

Clearly, the principle of attaching a loop is more general, but we abstain here from delving into this here.

Next we provide a theorem that excludes hot spots from certain regions of a graph, more precisely from bridges. In the following we denote by $\cD_\Gamma \subset \Gamma$ the \emph{doubly connected part} of $\Gamma$, i.e., the closed subgraph consisting of all points that are part of a cycle in $\Gamma$; note that $\cD_\Gamma$ can be obtained by successively removing all edges incident to a vertex of degree one as well as all (further) bridges; see also \cite[Section~6]{BKKM18}. We call $\Gamma$ \emph{doubly connected} if $\Gamma = \cD_\Gamma$. Moreover, we denote by 
\begin{align*}
 \interior \cD_\Gamma = \Gamma \setminus \overline{ \big(\Gamma \setminus \overline{\cD_\Gamma} \big) }
\end{align*}
the interior of the doubly connected part (with respect to the natural metric on $\Gamma$).

\begin{theorem}\label{thm:doublyConnectedPart}
Given any graph $\Gamma$, we have
\begin{align*}
 M \subset M_\loc \subset \partial \Gamma \cup \interior \cD_\Gamma.
\end{align*}
\end{theorem}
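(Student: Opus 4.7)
The plan is to argue by contradiction using a Rayleigh-quotient surgery at $x_{0}$. Suppose $x_{0}\in M_{\loc}$ with $x_{0}\notin \partial\Gamma\cup\interior\cD_{\Gamma}$; let $\psi$ be an eigenfunction for $\mu:=\mu_{2}(\Gamma)$ having a local extremum at $x_{0}$, and, after replacing $\psi$ by $-\psi$ if necessary, assume that it is a local maximum, so that $\psi(x_{0})>0$ by Lemma~\ref{lem:maxPos}. The condition $x_{0}\notin\partial\Gamma\cup\interior\cD_{\Gamma}$ forces $x_{0}$ to lie either in the interior of a bridge edge or at a vertex of degree at least two incident to at least one bridge edge; in either case $\Gamma\setminus\{x_{0}\}$ has at least two connected components, so we can decompose $\Gamma=\Gamma_{1}'\cup\Gamma_{2}'$ with $\Gamma_{1}',\Gamma_{2}'$ non-trivial connected subgraphs meeting only at $x_{0}$.

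The crucial step is to show that $\psi'$ vanishes at $x_{0}$ along each incident edge. If $x_{0}$ is interior to an edge this is immediate from smoothness and the local maximum. If $x_{0}$ is a vertex of degree at least two, then the local-maximum property forces the derivative of $\psi$ pointing away from $x_{0}$ along each incident edge to be non-positive; the Kirchhoff condition then forces each to vanish individually since their sum is zero. Therefore $\psi|_{\Gamma_{1}'}$ is a valid eigenfunction of the standard Laplacian $-\Delta_{\Gamma_{1}'}$ for the eigenvalue $\mu$. Because $\mu>0$, orthogonality to the ground state (the constants) gives $\int_{\Gamma_{1}'}\psi\,\d x=0$, while an integration by parts (the boundary terms at every vertex of $\Gamma_{1}'$, including $x_{0}$, vanish by the standard vertex conditions and by the step just established) yields $\int_{\Gamma_{1}'}(\psi')^{2}\,\d x=\mu\int_{\Gamma_{1}'}\psi^{2}\,\d x$.

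I would then introduce the test function $\phi\in H^{1}(\Gamma)$ by setting $\phi=\psi$ on $\Gamma_{1}'$ and $\phi\equiv \psi(x_{0})$ on $\Gamma_{2}'$; continuity at $x_{0}$ is automatic, and $\phi$ is non-constant since $\psi|_{\Gamma_{1}'}$ cannot be constant (otherwise $-\psi''=\mu\psi$ with $\mu>0$ forces $\psi\equiv 0$ on $\Gamma_{1}'$, contradicting $\psi(x_{0})>0$). Subtracting its mean gives $\tilde\phi$ with $\int_{\Gamma}\tilde\phi\,\d x=0$ and $\tilde\phi\not\equiv 0$. A direct computation using the two identities above gives
\[
\frac{\int_{\Gamma}(\tilde\phi')^{2}\,\d x}{\int_{\Gamma}\tilde\phi^{2}\,\d x}
\;=\;\frac{\mu\int_{\Gamma_{1}'}\psi^{2}\,\d x}{\int_{\Gamma_{1}'}\psi^{2}\,\d x+\psi(x_{0})^{2}\,|\Gamma_{1}'|\,|\Gamma_{2}'|/|\Gamma|}
\;<\;\mu,
\]
the strict inequality holding because $\psi(x_{0})\ne 0$ and both $|\Gamma_{1}'|,|\Gamma_{2}'|>0$. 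This contradicts the variational characterisation of $\mu_{2}(\Gamma)$.

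The main delicate step is the Neumann-type argument of the second paragraph: it relies crucially on the strict sign information provided by Lemma~\ref{lem:maxPos}, without which the Kirchhoff sum could be made to vanish in uninformative ways. The attractive feature of the Rayleigh-quotient surgery is that it is insensitive to any multiplicity of $\mu_{2}(\Gamma)$, and treats uniformly the three geometrically distinct configurations in which such an $x_{0}$ can occur --- interior point of a bridge edge, non-leaf interior vertex of a pendant tree, and attachment vertex of a pendant tree to $\cD_{\Gamma}$ --- all of which share the decisive feature of disconnecting $\Gamma$ into two pieces of positive length.
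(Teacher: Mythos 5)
Your proof is correct, but it takes a genuinely different route from the paper's. The paper first proves a stronger statement (Theorem~\ref{thm:noDisconnect}): disconnecting \emph{all} nonzero local maxima of a fixed eigenfunction $\psi$ simultaneously cannot disconnect $\Gamma$. Its contradiction argument builds a test function supported on the \emph{negative} parts of the two hypothetical components, shows its Rayleigh quotient \emph{equals} $\mu_2(\Gamma)$, concludes it is an eigenfunction, and then derives a contradiction from the Kirchhoff condition on the zero set (an eigenfunction-rigidity step). You instead work with a single cut point $x_0$, restrict $\psi$ to one side, and extend by the constant $\psi(x_0)$ to the other side, obtaining a mean-zero test function whose Rayleigh quotient is \emph{strictly} below $\mu_2(\Gamma)$ --- the extra mass $\psi(x_0)^2\,L(\Gamma_1')L(\Gamma_2')/L(\Gamma)$ in the denominator does the work. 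Both proofs hinge on the same key observation, which you state explicitly and the paper leaves implicit: at a local maximum that is a cut point, the one-sided sign of the outgoing derivatives combined with the Kirchhoff condition forces each derivative to vanish individually, so the restriction of $\psi$ to one component is itself a standard-Laplacian eigenfunction with positive eigenvalue, hence has zero mean there. Your version is more elementary and self-contained for the statement at hand, and it handles eigenvalue multiplicity and all three geometric configurations of $x_0$ uniformly; what it does not give is the simultaneous multi-point disconnection statement of Theorem~\ref{thm:noDisconnect}, which the paper reuses elsewhere (e.g.\ in the proof of Lemma~\ref{lem:complete-ef}). One small remark: the strict sign $\psi(x_0)>0$ from Lemma~\ref{lem:maxPos} is not actually what makes the Kirchhoff step work (the local-maximum property alone gives the one-sided derivative signs); what you really need from the definition of $M_{\loc}$ is $\psi(x_0)\neq 0$, which is what yields both the non-constancy of $\psi|_{\Gamma_1'}$ and the strictness of your final inequality.
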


For tree graphs the doubly connected part is empty and thus Theorem~\ref{thm:doublyConnectedPart} implies the following statement; a slightly weaker result was given by the authors in~\cite{KRpamm}.

\begin{corollary}
\label{cor:treelocalhotspots}
For any tree $\Gamma$, $M \subset M_{\loc} \subset \partial \Gamma$.
\end{corollary}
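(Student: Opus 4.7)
The plan is to derive the corollary directly from Theorem~\ref{thm:doublyConnectedPart}, which gives the inclusion $M \subset M_\loc \subset \partial\Gamma \cup \interior\cD_\Gamma$ on any graph. The only thing to check is that the doubly connected part $\cD_\Gamma$ is empty for a tree, so that its interior contributes nothing and we are left with the boundary.

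I will proceed as follows. First, I would recall that by definition $\Gamma$ being a tree means $\beta = 0$, i.e.\ $\Gamma$ contains no cycles. The set $\cD_\Gamma$ was defined as the subgraph consisting of all points that lie on some cycle in $\Gamma$; since no cycle exists, not a single point of $\Gamma$ qualifies, so $\cD_\Gamma = \emptyset$, and consequently $\interior \cD_\Gamma = \emptyset$. As a sanity check one can also invoke the equivalent description of $\cD_\Gamma$ as what remains after iteratively removing vertices of degree one and bridges: in a tree every edge is a bridge (removing any edge disconnects the graph), so this pruning process exhausts $\Gamma$ entirely, again giving $\cD_\Gamma = \emptyset$.

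With $\interior \cD_\Gamma = \emptyset$, Theorem~\ref{thm:doublyConnectedPart} specialises to
\begin{equation*}
M \subset M_\loc \subset \partial\Gamma \cup \emptyset = \partial\Gamma,
\end{equation*}
which is exactly the assertion of the corollary. There is no real obstacle here: the entire content lies in Theorem~\ref{thm:doublyConnectedPart} and the observation that a tree has no doubly connected part. The only point worth flagging, to avoid any ambiguity, is that the inclusion holds for $M_\loc$ (and hence for $M$) in the strong sense that every local extremum of every eigenfunction for $\mu_2(\Gamma)$ sits at a degree-one vertex; in particular, combined with the fact observed in Section~\ref{sec:prelim} that each set $M_{\psi,\loc}$ is finite and $\partial\Gamma$ itself is a finite set of vertices, this also immediately yields Proposition~\ref{prop:tree-m-finite}.
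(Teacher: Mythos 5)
Your proof is correct and follows exactly the paper's own route: the corollary is stated in the text as an immediate consequence of Theorem~\ref{thm:doublyConnectedPart} together with the observation that $\cD_\Gamma = \emptyset$ for a tree. Your additional remark that this also yields Proposition~\ref{prop:tree-m-finite} likewise matches the paper's own comment following the corollary.
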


We point out that Proposition~\ref{prop:tree-m-finite} is a trivial consequence of Corollary~\ref{cor:treelocalhotspots} as $\partial \Gamma$ only consists of finitely many points.

Finally we investigate to what extent the ``coldest'' and ``hottest'' points on a metric graph are necessarily far apart from each other. In fact this is not necessarily the case:

\begin{proposition}\label{prop:hotspotsClose}
For each $\eps > 0$ there exists a metric tree $\Gamma$ with $\diam \Gamma = 1$ such that $\mu_2 (\Gamma)$ is simple and 
\begin{align*}
 \max \{ \dist (x, y) : x, y \in M \} < \eps.
\end{align*}
\end{proposition}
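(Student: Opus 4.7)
The plan is to construct explicitly, for each $\eps>0$, a metric tree $\Gamma_\eps$ satisfying the conclusion, by a metric-graph analogue of the Fiedler rose construction from the discrete setting~\cite{E11}. Fix $\delta\in(0,\eps/4)$ and an integer $n>4/(\pi^2\delta^2)$, and let $\Gamma_\eps$ consist of a central vertex $c$ with three edges emanating, to vertices $w_1,w_2,w_3$ of lengths $\delta,\delta,1-2\delta$ respectively, where $w_3$ is a degree-one vertex; at each of $w_1,w_2$ attach $n$ further pendant edges of length $\delta$. A direct check gives $\diam\Gamma_\eps=1$, realised between $w_3$ and any of the $2n$ pendant tips.

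To study the spectrum of $-\Delta_{\Gamma_\eps}$ I exploit the reflection involution swapping the $w_1$-branch with the $w_2$-branch, fixing $c$ and the $cw_3$ arm. Writing $\psi_e(x)=A_e\cos(kx)+B_e\sin(kx)$ on each edge and imposing continuity and Kirchhoff at every vertex, the antisymmetric eigenfunctions (those changing sign under the involution) must vanish on the $cw_3$ arm and at $c$; applying Kirchhoff at $w_1$ to the symmetric ansatz across the $n$ pendants there yields the eigenvalue equation $\cot(k\delta)=\sqrt{n}$, whose smallest positive solution $k_n$ satisfies $k_n^2\le 1/(n\delta^2)$. Parallel computations show that the smallest symmetric collective eigenvalue is near $\pi^2/4$ and that the ``local'' modes with nonconstant values among pendants at a single bush yield only eigenvalues $\ge\pi^2/(4\delta^2)$. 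For the chosen $n$ the antisymmetric eigenvalue $k_n^2$ is strictly smaller than all of these, and has a one-dimensional eigenspace, so $\mu_2(\Gamma_\eps)=k_n^2$ and is simple.

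Finally, I pin down the hot spots. From $\tan(k_n\delta)=1/\sqrt{n}$ one computes $\sin(k_n\delta)=1/\sqrt{n+1}$ and $\cos(k_n\delta)=\sqrt{n}/\sqrt{n+1}$. Setting $\psi(x)=A\sin(k_n x)$ on the edge $cw_1$ (parametrised with $x=0$ at $c$) and denoting by $F$ the common value at the pendant tips at $w_1$, the continuity condition at $w_1$ gives $F=A\tan(k_n\delta)=A/\sqrt{n}$, while $\psi(w_1)=A\sin(k_n\delta)=A/\sqrt{n+1}$, with signs reversed on the $w_2$-side and $\psi\equiv 0$ on the $cw_3$ arm. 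Since $A/\sqrt{n}>A/\sqrt{n+1}$, the global maxima of $\psi$ are precisely the $n$ pendant tips at $w_1$ and the minima are the $n$ pendant tips at $w_2$; the leaf $w_3$ is excluded because $\psi(w_3)=0$ (cf.~Lemma~\ref{lem:maxPos}). The largest pairwise distance in $M(\Gamma_\eps)$ is realised between a $w_1$-tip and a $w_2$-tip, passing through $w_1\to c\to w_2$, and equals $4\delta<\eps$, as required.

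The main obstacle is the spectral comparison: verifying that $k_n^2$ is indeed $\mu_2(\Gamma_\eps)$. This is dealt with by the clear separation of scales---antisymmetric collective of order $1/(n\delta^2)$, symmetric collective of order $1$, and local pendant of order $1/\delta^2$---in the chosen parameter regime, which simultaneously forces $\mu_2$ to coincide with the antisymmetric eigenvalue and establishes its simplicity.
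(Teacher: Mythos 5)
Your construction (a ``double broom'' at the end of a long arm) is genuinely different from the paper's proof, which instead starts from an equilateral $4$-star carrying an eigenfunction for $\mu_2=\pi^2$ supported on a single diametral path, splits the two relevant edges into many short pendants in a way that preserves this eigenfunction, invokes the surgery principles of \cite{BKKM18} to confirm that $\pi^2$ is still $\mu_2$, and then perturbs once more to obtain simplicity. Your route is more self-contained, and its geometric part is correct: $\diam\Gamma_\eps=1$, the antisymmetric secular equation $\cot(k\delta)=\sqrt n$ and the resulting profile (value $A/\sqrt n$ at the tips versus $A/\sqrt{n+1}$ at $w_1$) are right, and \emph{once} $\mu_2(\Gamma_\eps)$ is identified with the simple antisymmetric eigenvalue $k_n^2$, the set $M$ is exactly the $2n$ pendant tips and $\max\{\dist(x,y):x,y\in M\}=4\delta<\eps$.

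The gap is precisely in the step you flag as the main obstacle. You dispose of the symmetric sector by asserting that its lowest nontrivial eigenvalue is ``near $\pi^2/4$'' and then comparing with $k_n^2\le 1/(n\delta^2)<\pi^2/4$. This does not close: for the minimal admissible $n\approx 4/(\pi^2\delta^2)$ one has $k_n^2=\pi^2/4-O(\delta^2)$, i.e.\ $k_n^2$ is itself ``near $\pi^2/4$'', so an unquantified approximation of the symmetric eigenvalue (which, modelling the bush as a point mass at the end of an arm of length $1-2\delta$, is plausibly $\pi^2/4\pm O(\delta)$) cannot decide which of the two is smaller; nor is the approximation itself proven. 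What you actually need is that the first positive root of the symmetric secular equation exceeds $k_n$. This is true, and the cleanest way to see it is to write that equation out: for modes symmetric under the swap and under pendant permutations one finds, with $a=1-2\delta$,
\begin{align*}
 \tan(ka) \;=\; \frac{2(n+1)\tan(k\delta)}{n\tan^2(k\delta)-1},
\end{align*}
whose right-hand side is negative for $0<k<k_n$ (since $\tan(k\delta)<1/\sqrt n$ there) and has its first pole exactly at $k=k_n$, while the left-hand side is nonnegative on $(0,k_n]$ because $k_na<k_n<\pi/2$ (this is exactly what your condition $n>4/(\pi^2\delta^2)$ buys). Hence there is no symmetric eigenvalue in $(0,k_n^2]$; combined with your (correct) observation that all modes which are not permutation-symmetric at a bush have eigenvalue at least $\pi^2/(4\delta^2)$, this yields $\mu_2(\Gamma_\eps)=k_n^2$, simple. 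Alternatively, take $n$ much larger than $4/(\pi^2\delta^2)$ and prove a quantitative, $n$-independent lower bound for the first symmetric eigenvalue. Either repair is routine, but as written the decisive inequality is asserted rather than proved.
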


Although in general the distance between the hottest and coldest spots on a tree does not need to realise the diameter, this is true for the class of star graphs, as the following proposition shows. A corresponding result also holds for flowers; see Remark~\ref{rem:flowerDiameter}, and it can be extended to more general ``star-like'' graphs.

\begin{proposition}\label{prop:starDiameter}
Let $\Gamma$ be a star graph, let $\psi \in \ker (- \Delta_\Gamma - \mu_2 (\Gamma))$ be nontrivial and let $x, y \in \Gamma$ such that $\psi$ takes its global maximum at $x$ and its global minimum at $y$. Then
\begin{align*}
 \dist (x, y) = \diam \Gamma.
\end{align*}
\end{proposition}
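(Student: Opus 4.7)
Since $\Gamma$ is a tree, Corollary~\ref{cor:treelocalhotspots} forces $x$ and $y$ to be pendant vertices of $\Gamma$. Labelling the central vertex $v_0$, ordering the edges $e_1,\dots,e_E$ so that $L_1\geq\cdots\geq L_E$, and parametrising each $e_i$ as $[0,L_i]$ with pendant $p_i$ at $0$, any eigenfunction for $\mu_2(\Gamma)$ satisfies $\psi_i(t)=A_i\cos(kt)$ on $e_i$ (from the Neumann condition at $p_i$), where $k=\sqrt{\mu_2(\Gamma)}$; moreover, continuity at $v_0$ imposes a common value $c:=\psi(v_0)=A_i\cos(kL_i)$ for every $i$. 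Since the global max and min of $\psi$ both lie at pendants, $\max_\Gamma \psi=\max_i A_i$ and $\min_\Gamma \psi=\min_i A_i$, and the task reduces to showing that these extrema are attained at pendants of two edges whose lengths sum to $L_1+L_2=\diam(\Gamma)$.

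I would distinguish two cases according to whether $L_1 > L_2$ or $L_1 = L_2$. If $L_1 > L_2$, my plan is to work with the branch $c \neq 0$: substituting $A_i = c/\cos(kL_i)$ into the Kirchhoff condition $\sum_i \psi_i'(L_i) = 0$ produces the secular equation $\sum_{i=1}^E \tan(kL_i)=0$. A sign analysis (positive on $(0,\pi/(2L_1))$, diverging to $-\infty$ just above $\pi/(2L_1)$, and turning positive again by the time $k$ reaches $\min(\pi/(2L_2),\pi/L_1)$) locates the least positive root in the interval $(\pi/(2L_1),\min(\pi/(2L_2),\pi/L_1))$; checking that this root is strictly smaller than any eigenvalue arising from the $c = 0$ branch then identifies it with $\mu_2(\Gamma)$. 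Consequently $k L_1 \in (\pi/2,\pi)$ and $kL_j \in (0,\pi/2)$ for every $j \geq 2$, so (after replacing $\psi$ by $-\psi$ if needed to arrange $c > 0$) $A_1 < 0 < A_j$ for $j \geq 2$. The minimum of $\psi$ is thus $A_1$, attained at $p_1$; and since $\cos(kL_j)$ is strictly decreasing in $L_j$ over $kL_j \in (0,\pi/2)$, the maximum is $A_2 = c/\cos(kL_2)$, attained at $p_2$. Therefore $\dist(x,y) = L_1 + L_2 = \diam(\Gamma)$.

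If instead $L_1 = L_2$, I would argue that $\mu_2(\Gamma)$ is attained through the $c=0$ branch: the requirement $A_i \cos(kL_i) = 0$ on edges in the support of $\psi$ forces $\cos(kL_i) = 0$ there, and the least $k > 0$ for which a non-trivial Kirchhoff-compatible solution exists is $k = \pi/(2L_1)$, requiring at least two edges of length $L_1$. Since $\sin(kL_i) = 1$ on those edges, Kirchhoff becomes $\sum_{L_i = L_1} A_i = 0$, so $\psi$ takes values $A_i$ of both signs at the pendants of the length-$L_1$ edges (and is identically $0$ on the remaining edges); its max and min are attained at two such pendants, yielding $\dist(x,y) = 2L_1 = L_1 + L_2 = \diam(\Gamma)$.

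The main obstacle will be the secular-equation analysis in the case $L_1 > L_2$: showing cleanly that the least positive root both exists in $(\pi/(2L_1),\min(\pi/(2L_2),\pi/L_1))$ and undercuts all $c = 0$ eigenvalues, so that it genuinely is $\mu_2(\Gamma)$. Once that is in place, the rest follows from the monotonicity of $L_j \mapsto c/\cos(kL_j)$ and the reduction (via Corollary~\ref{cor:treelocalhotspots}) to comparing values at the pendants alone.
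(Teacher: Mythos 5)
Your proposal is correct, but it takes a genuinely different route from the paper's proof. You split according to whether the longest edge is unique ($L_1>L_2$) or not, and in the first case you pin down $\mu_2(\Gamma)$ explicitly as the square of the least positive root of the secular equation $\sum_i\tan(kL_i)=0$, locating that root in $\bigl(\pi/(2L_1),\min(\pi/(2L_2),\pi/L_1)\bigr)$ and checking that it undercuts every eigenvalue of the $c=0$ branch (all of which satisfy $k\geq\pi/(2L_2)$, since a nontrivial $c=0$ eigenfunction needs $\cos(kL_i)=0$ on at least two edges to meet the Kirchhoff condition). The paper instead splits on whether $\psi(v_0)=0$ --- which, as your analysis shows, amounts to the same dichotomy --- and argues qualitatively: it uses the tree diameter bound $\sqrt{\mu_2(\Gamma)}\leq\pi/\diam\Gamma$ of \cite[Theorem~3.4]{R17} to show that when $\psi(v_0)=0$ every edge in the support of $\psi$ has length exactly $\pi/(2k)$ and no other edge is longer, and that when $\psi(v_0)\neq 0$ the eigenfunction has exactly one zero; it then reads off the extrema from the relations $L(e_j)=\arctan(A_j/F)/k$ and $\max\psi_{e_j}=\sqrt{A_j^2+F^2}$, which is essentially your observation that $A_j=c/\cos(kL_j)$ increases with $L_j$. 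Your route buys self-containedness --- no external diameter bound is needed, and the explicit sinusoidal form on each edge even makes the appeal to Corollary~\ref{cor:treelocalhotspots} dispensable --- at the price of the secular-equation bookkeeping you flag as the main obstacle; that bookkeeping does go through exactly as you describe, since the sum of tangents is continuous and strictly increasing on your interval, tends to $-\infty$ at its left endpoint and is positive at its right endpoint. The paper's argument is shorter but leans on \cite{R17} and on Theorem~\ref{thm:doublyConnectedPart}.
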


We remark that $x$ and $y$ as in the proposition have to lie on $\partial \Gamma$ by Corollary~\ref{cor:treelocalhotspots}.

Proposition~\ref{prop:hotspotsClose} is proved by means of an explicit construction in Example~\ref{ex:krpamm}. The proof of Proposition~\ref{prop:starDiameter} is given in Section~\ref{sec:location-diameter}, while Theorem~\ref{thm:doublyConnectedPart} will turn out to be a corollary of the slightly more general Theorem~\ref{thm:noDisconnect} given below.

\subsection{Graph topology and hot spots}
\label{sec:topology-summary}

We just saw that the global extrema either lie on $\partial\Gamma$ or in the doubly connected part of $\Gamma$. Here we show that the question of which of the two it is can depend essentially on the edge lengths of $\Gamma$, not on its topology, if these are chosen correctly.

\begin{theorem}
\label{thm:examplesummary}
Let $\cG$ be a finite, connected discrete graph. 
\begin{enumerate}
 \item If $|\partial \cG| \geq 1$ then there exists an associated metric graph $\Gamma$ (see Section~\ref{subsec:basic}) such that the eigenvalue $\mu_2 (\Gamma)$ is simple and the corresponding eigenfunction takes its maximum only on $\partial \Gamma$.
 \item If $|\partial \cG| \geq 2$ then there exists an associated metric graph $\Gamma$ such that the eigenvalue $\mu_2 (\Gamma)$ is simple and the corresponding eigenfunction takes its maximum and minimum only on $\partial \Gamma$.
 \item If $\beta \geq 1$ and $\cG$ is not a cycle graph then there exists an associated metric graph $\Gamma$ such that the eigenvalue $\mu_2 (\Gamma)$ is simple and the corresponding eigenfunction takes its maximum only in $\cD_\Gamma$.
 \item If $\beta \geq 2$ then there exists an associated metric graph $\Gamma$ such that the eigenvalue $\mu_2 (\Gamma)$ is simple and the corresponding eigenfunction takes its maximum and minimum only in $\cD_\Gamma$.
\end{enumerate}
\end{theorem}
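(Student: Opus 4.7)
The plan is to treat all four parts by a unified limiting argument. For each part I pick an assignment of lengths to the edges of $\cG$ depending on a small parameter $\varepsilon > 0$, chosen so that the associated metric graph $\Gamma_\varepsilon$ is close (spectrally, in the sense of the eigenvalue/eigenfunction convergence proved in the appendix) to a simpler metric graph $\Gamma_0$ whose $\mu_2$-eigenfunction I can analyse directly. The target conclusions---$\mu_2(\Gamma_\varepsilon)$ simple, and the unique (up to sign) eigenfunction attaining its maximum (and, in (ii) and (iv), its minimum) exactly at prescribed points---then persist for all sufficiently small $\varepsilon > 0$ by the uniform convergence of eigenfunctions, combined with Theorem~\ref{thm:doublyConnectedPart} and Lemma~\ref{lem:maxPos}, which constrain the location of extrema to $\partial \Gamma \cup \interior \cD_\Gamma$.

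For (i), pick any $v \in \partial\cG$ with pendant edge $e_0$, declare $L(e_0) = 1$, and give every other edge length $\varepsilon$. Since $v$ is a leaf, $\cG \setminus \{v\}$ is connected, so as $\varepsilon \to 0$ all vertices other than $v$ collapse to a single super-vertex $w$, and $\Gamma_0$ is simply the interval $[0,1]$ with Neumann boundary conditions. Its eigenfunction $\cos(\pi x)$ has unique maximum $+1$ at $v$ and minimum $-1$ at $w$; for small $\varepsilon$ the eigenfunction on $\Gamma_\varepsilon$ is uniformly close to this, so on every short edge it takes values close to $-1$, while at $v$ it is close to $+1$, forcing $v$ to be the unique global maximum (and $v \in \partial\Gamma_\varepsilon$). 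Case (ii) is the same construction with two long pendant edges $e_1,e_2$ at $v_1, v_2 \in \partial\cG$, giving $\Gamma_0 = [0,2]$ with the cosine profile, and hence unique extrema at $v_1$ and $v_2$.

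For (iii) and (iv) the limit is a lasso, respectively a dumbbell or figure-eight. In (iii), if $\cG$ has a pendant vertex I keep a cycle $C$ and a path $P$ from $C$ to that pendant both at substantial length, with all remaining edges of length $\varepsilon$; the limit is a lasso, whose $\mu_2$ is simple with eigenfunction monotonic along the chain and attaining its global maximum uniquely at the midpoint of the loop (\cite[Lemma~5.5]{BKKM18} and Example~\ref{ex:lasso}), which lies in $\interior\cD_\Gamma$. If instead $\cG$ has no pendant then, since $\cG$ is not a cycle, a short degree count gives $\beta \geq 2$ and I reduce to (iv). For (iv), since $\beta \geq 2$ I pick two independent cycles $C_1, C_2$ in $\cG$ together with a connecting path $P$ of bridge edges if needed, make these ``long'' with $C_1, C_2$ of equal total length, and contract the remainder; the limit is an equilateral figure-eight (or dumbbell) whose $\mu_2$-eigenfunction is, by the reflection-antisymmetry argument of Example~\ref{ex:verrueckt}, unique up to sign and attains its maximum and minimum at the midpoints of the two loops, both in $\interior \cD_\Gamma$.

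The main obstacle is the contraction argument for parts (iii) and (iv) when $\cG$ has awkward topology. If a short edge has both endpoints on $C \cup P$ (respectively $C_1 \cup C_2 \cup P$), contracting it introduces a chord and the limit is no longer a clean lasso (respectively dumbbell), so a careful choice of which edges to keep ``long'' --- for instance, using a spanning-tree structure of the long subgraph --- is needed; in cases such as the theta graph, where $\cG$ contains no figure-eight at all, one must replace the dumbbell limit by a direct analysis of the generically asymmetric $E$-pumpkin of Example~\ref{ex:pumpkin}, whose reflection- or rotation-symmetric eigenfunction has its extrema in the interior of edges, all of which lie in $\cD_\Gamma$. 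Once the limit graph is correctly identified, upgrading ``extrema are close to the right points of $\Gamma_0$'' to ``extrema are exactly at the right points of $\Gamma_\varepsilon$ and in the correct region'' follows from the openness of $\interior\cD_\Gamma$, the discreteness of $\partial \Gamma$, and the strict global-versus-local comparison afforded by uniform convergence.
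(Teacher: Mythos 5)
Your overall strategy --- shrink most edges to a small length, identify the limit graph, and transfer simplicity and the location of the extrema back to $\Gamma_\varepsilon$ via the uniform eigenfunction convergence of Theorem~\ref{thm:appendix} combined with Theorem~\ref{thm:doublyConnectedPart} --- is exactly the paper's, and your treatments of (i) and (ii) coincide with its proof. The gap is in (iii) and (iv). By electing to keep an entire cycle $C$ (respectively two cycles $C_1,C_2$ and a connecting path) at substantial length, you create precisely the difficulty you then describe: chords arising from contracted edges with both endpoints on the long subgraph, and graphs such as the theta graph which contain no pair of edge-disjoint cycles at all. You state that ``a careful choice of which edges to keep long is needed'' but do not make one, and your fallback for the theta-type case --- a direct analysis of the generically asymmetric $E$-pumpkin --- is not actually available: Example~\ref{ex:pumpkin} describes $M_{\loc}$ for a non-equilateral pumpkin only under the a priori assumptions that $\mu_2$ is simple and the eigenfunction vanishes on no edge, and the paper itself leaves the precise description of the extrema of non-equilateral pumpkins as a conjecture in Section~\ref{sec:conjectures}. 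As written, parts (iii) and (iv) are therefore not proved.

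The fix is simple and is what the paper does: keep only \emph{single} edges long, so that the contracted remainder is connected and collapses to one vertex. For (iii), take one non-bridge edge $e_0$ (an edge lying on a cycle) and one pendant edge $e_1$, give both length $1$ and all other edges length $\delta$. The graph $\cG$ minus $e_0$ and $e_1$ consists of the isolated boundary vertex together with one connected piece; the latter contracts to a single vertex as $\delta \to 0$, $e_0$ becomes a loop at that vertex and $e_1$ a pendant, so the limit is always a clean lasso (Example~\ref{ex:lasso}) with no case distinctions. For (iv), choose $e_0$ non-bridge in $\cG$ and then $e_1$ non-bridge in $\cG - e_0$ (possible since removing a non-bridge edge lowers $\beta$ by one, so $\cG - e_0$ still contains a cycle); then $\cG - e_0 - e_1$ is connected, contracts to a point, and the limit is always an equilateral figure-8 --- this handles the theta graph without any pumpkin detour. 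With this modification your argument closes and reduces to the paper's.
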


The proof of this theorem is provided in Section~\ref{sec:topology}.

\begin{remark}
Actually, the proof of Theorem~\ref{thm:examplesummary} will show more than the theorem states: the global extrema may be placed on arbitrary boundary vertices in~(i) and~(ii), inside an arbitrary edge in the doubly connected part in~(iii) and on two arbitrary (but different) cycles in~(iv).
\end{remark}

\section{On the number of hot spots}
\label{sec:number}

\subsection{Graphs with an uncountable number of hot spots: proof of Propositions~\ref{prop:m-gamma-examples} and~\ref{prop:m-finite-or-uncountable}}
\label{sec:number-uncountable}

We start with a technical lemma. Recall that for any two points $x_1, x_2 \in \Gamma$ that lie inside the same edge $e$ we write $\dist_e (x_1, x_2)$ for the distance between $x_1$ and $x_2$ in the metric on the interval $[0, L (e)]$.  Note that $\dist_e (x_1, x_2) \geq \dist (x_1, x_2)$ holds for the distance $\dist (x_1, x_2)$ with respect to the metric on $\Gamma$.

\begin{lemma}
\label{lem:Mgeneration}
Given $\Gamma$, suppose that $x_1,x_2 \in M_{\loc}$ both lie on a given edge $e \in \cE$ and
\begin{displaymath}
	\dist_e (x_1,x_2) \leq \frac{\pi}{2\sqrt{\mu_2(\Gamma)}}.
\end{displaymath}
Then the segment $[x_1,x_2] \subset e$ between $x_1$ and $x_2$ in $e$ lies in $M_{\loc}$. More precisely, for every $y \in [x_1,x_2]$ there exists an eigenfunction whose unique critical point in $[x_1,x_2]$ is a (local) maximum at $y$.
\end{lemma}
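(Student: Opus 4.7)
The plan is to choose, for each $y \in [x_1,x_2]$, a non-negative linear combination of witness eigenfunctions at $x_1$ and $x_2$ whose restriction to $e$ is a pure cosine centred at $y$. By Lemma~\ref{lem:maxPos} I pick eigenfunctions $\psi_1,\psi_2 \in \ker(-\Delta_\Gamma - \mu_2(\Gamma))$ with local maxima at $x_1,x_2$ respectively, and $\psi_i(x_i) > 0$. Writing $\omega := \sqrt{\mu_2(\Gamma)}$ and parametrising $e$ as $[0,L(e)]$ so that $x_1 \leq x_2$ and $d := x_2 - x_1 \leq \pi/(2\omega)$, the eigenvalue equation $-\psi_i'' = \omega^2 \psi_i$ on $e$ forces each restriction to have the form $\psi_i|_e(s) = R_i\cos(\omega(s - x_i))$ with $R_i = \psi_i(x_i) > 0$, in the typical case (which I would treat first) that $x_i$ lies in the interior of $e$ or at a degree-two vertex, so that $\psi_i|_e$ has a smooth critical point at $x_i$.

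For $y \in [x_1,x_2]$ I would then consider the eigenfunction
\begin{equation*}
\psi := R_2\sin\bigl(\omega(x_2-y)\bigr)\,\psi_1 + R_1\sin\bigl(\omega(y-x_1)\bigr)\,\psi_2 ,
\end{equation*}
whose coefficients are non-negative (since $\omega(x_2-y),\omega(y-x_1) \in [0,\pi/2]$) and not both zero. Using $\sin A\cos B + \sin B\cos A = \sin(A+B)$, a direct expansion gives
\begin{equation*}
\psi|_e(s) = R_1 R_2\sin(\omega d)\,\cos\bigl(\omega(s-y)\bigr) .
\end{equation*}
The slickest way to verify this identity is to note that both sides solve the same second-order linear ODE on $e$ and agree, together with their first derivatives, at $s = y$; this is the single nontrivial calculation in the argument.

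Finally, $\sin(\omega d) > 0$ since $0 < \omega d \leq \pi/2$, so $\psi(y) > 0$, and for $s \in [x_1,x_2]$ the bound $|s-y| \leq d \leq \pi/(2\omega)$ makes $\cos(\omega(s-y))$ strictly unimodal with its unique maximum at $s = y$; hence $\psi|_{[x_1,x_2]}$ has exactly one critical point there, namely a strict local maximum at $y$. If $y$ lies in the interior of $[x_1,x_2]$, this places $y \in M_{\loc}(\Gamma)$ (the value being positive, as required by Lemma~\ref{lem:maxPos}); the endpoints $x_1,x_2$ are in $M_{\loc}$ by hypothesis. The one remaining loose end is the degenerate case where some $x_i$ happens to be a vertex of degree greater than two: here $\psi_i|_e$ is still a cosine peaking (possibly one-sidedly) at $x_i$, and after adjusting the parametrisation of $\psi_i|_e$ accordingly the same combination yields the desired pure cosine on $e$ by essentially the same trigonometric computation.
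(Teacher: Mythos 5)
Your proof is correct and takes essentially the same route as the paper: both form a non-negative linear combination of the two witness eigenfunctions whose restriction to $e$ peaks at $y$, and indeed your explicit coefficients $R_2\sin(\omega(x_2-y))$, $R_1\sin(\omega(y-x_1))$ reproduce, up to overall scaling, exactly the paper's ratio $\alpha_y = -\sin(ky)/\sin(k(y-L))$. The only cosmetic difference is that you verify the peak via the product-to-sum identity yielding a pure cosine, whereas the paper locates the zero of the derivative and invokes $\psi_y'' = -\mu_2\psi_y < 0$; your closing remark about vertices of degree $\geq 3$ is also fine, since at such a local maximum the Kirchhoff condition forces all outgoing derivatives to vanish, so $\psi_i|_e$ is a genuine cosine centred at $x_i$ there too.
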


\begin{proof}
Identify $e$ with the interval $[0, L]$, where $L = L (e)$, and suppose without loss of generality that $x_1=0$, $x_2=L$. Then since $0 \in M_{\loc}$, there exists an eigenfunction $\psi_0$ which on $e$ up to normalisation has the form $\psi_0 (x) = \cos (k x)$, $x \in [0,L]$, where $k = \sqrt{\mu_2 (\Gamma)}$. Similarly, since the endpoint of $e$ corresponding to $L$ is in $M_{\loc}$ there exists an eigenfunction $\psi_{L} (x) = \cos (k (x-L))$, $x \in [0,L]$.

Now since $L \leq \pi/(2 k)$ by assumption, we have that $\psi_0$ is non-negative and monotonically decreasing on $[0,L]$, while $\psi_{L}$ is non-negative and monotonically increasing on $[0,L]$. We claim that for all $y \in (0,L)$ there exists some constant $\alpha_y>0$ such that the eigenfunction
\begin{align*}
 \psi_y := \psi_0 + \alpha_y \psi_L 
\end{align*}
(which means
\begin{displaymath}
	\psi_y (x) = \cos (k x) + \alpha_y \cos (k (x-L))
\end{displaymath}
for $x \in [0, L]$) reaches its unique maximum on $[0,L]$ at $y$; this will prove the lemma. To prove the claim, we calculate that
\begin{displaymath}
	\psi_y'(x) = [-\sin (k x) - \alpha_y \sin (k(x-L))]k = 0
\end{displaymath}
if and only if
\begin{displaymath}
	\alpha_y = - \frac{\sin (k x)}{\sin (k(x-L))};
\end{displaymath}
note that $\alpha_y > 0$ since the numerator is positive and the denominator is negative, whence $\psi_y > 0$ on $(0,L)$. Hence, given $y$, if we set
\begin{equation*}
% \label{eq:alphay}
	\alpha_y := - \frac{\sin (k y)}{\sin (k(y-L))},
\end{equation*}
then this is the unique value of $\alpha_y$ for which $\psi_y'(y)=0$. Since
\begin{displaymath}
	\psi_y''(x) = - \mu_2 (\Gamma) \psi_y(x) < 0
\end{displaymath}
for all $x \in (0,L)$, $y$ is clearly the unique maximum of $\psi_y$ on $[0,L]$. This proves the claim.
\end{proof}

With the help of this lemma, we can prove that $M=\Gamma$ for the graphs claimed in Proposition~\ref{prop:m-gamma-examples}. First, however, we need a description of the eigenfunctions of the equilateral complete graph.

\begin{lemma}
\label{lem:complete-ef}
Let $\Gamma$ be the equilateral complete graph on $V \geq 4$ vertices (and $V(V-1)/2$ edges of length $1$ each). Fix any vertex $v_0$ and denote by $v_1,\ldots, v_{(V - 2)(V-1)/2}$ the set of points in the middle of any edge not incident to $v_0$ (thus $\dist (v_0,v_k) = \diam \Gamma$ for all $k=1,\ldots, (V - 2) (V-1)/2$, cf.\ Figure~\ref{fig:complete-ef}). Then there exists an eigenfunction $\psi$ for $\mu_2 (\Gamma)$ taking on its global maximum at $v_0$, its global minimum at the $v_k$, and with no other local minima or maxima on $\Gamma$.
\end{lemma}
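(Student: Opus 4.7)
The approach is to exploit the $S_{V-1}$-symmetry of $\Gamma$ that permutes the vertices distinct from $v_0$ and construct an explicit totally-symmetric eigenfunction $\psi$, i.e.\ one determined by the two parameters $\psi(v_0) = A$ and $\psi(v_i) = B$ for every $i \ne 0$. The restriction of such a $\psi$ to each edge is then the unique sinusoidal solution of $-\psi'' = k^2 \psi$ matching the prescribed endpoint values, where $k = \sqrt{\mu_2(\Gamma)}$; the task reduces to choosing $k$, $A$ and $B$ such that the Kirchhoff conditions are satisfied, which by symmetry need only be checked at $v_0$ and at one representative vertex $v_i \ne v_0$.

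Imposing Kirchhoff at $v_0$ by summing over the $V-1$ identical outgoing derivatives forces the relation $B = A\cos k$. Imposing it at a vertex $v_i \ne v_0$, where the two kinds of incident edges contribute different derivatives, reduces after a short calculation using $\sin^2 k = (1-\cos k)(1+\cos k)$ to the quantisation
\begin{equation*}
(1 - \cos k)\bigl[1 + (V-1)\cos k\bigr] = 0.
\end{equation*}
Discarding the branch $\cos k = 1$, which forces $k$ to be a nonzero multiple of $2\pi$ and hence gives eigenvalues $\geq 4\pi^2$, I take $k = \arccos(-1/(V-1)) \in (\pi/2, \pi)$ and $A > 0$, producing an eigenfunction at eigenvalue $k^2 < \pi^2$ with $B = -A/(V-1) < 0$.

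Analysing the extrema of $\psi$ is then straightforward edgewise. On every edge $(v_0, v_i)$, parametrised by $[0,1]$ with $0$ at $v_0$, the restriction reduces to $A\cos(kx)$ and decreases monotonically from $A$ to $B$ with no interior critical points. On every edge $(v_i, v_j)$ with $i, j \ne 0$ one rewrites the restriction as $(B/\cos(k/2))\cos(k(x-1/2))$, whose unique interior critical point is the midpoint $x = 1/2$; because the amplitude $B/\cos(k/2)$ is negative and strictly less than $B$, the midpoint is a strict local minimum, and its value is the global minimum of $\psi$. Combining the two edge types shows that $v_0$ is the unique global maximum and that the midpoints $v_1, \ldots, v_{(V-2)(V-1)/2}$ are exactly the global minima, while a one-sided derivative computation at each intermediate vertex $v_i \ne v_0$ (where $\psi$ increases in the direction of $v_0$ and decreases in every other direction) rules these vertices out as local extrema.

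The main obstacle is verifying that $k^2$ really equals $\mu_2(\Gamma)$ and not merely some eigenvalue produced by the symmetric ansatz. Since $k^2 < \pi^2$, one must rule out any positive eigenvalue strictly below $k^2$; this can be done via a standard spectral analysis of equilateral complete graphs, for instance by applying the secular equation for equilateral quantum graphs, or by an isotypic decomposition of the eigenspaces under the $S_V$-symmetry of $\Gamma$, which confirms that no positive eigenvalue lies below $k^2$ and so $k^2 = \mu_2(\Gamma)$.
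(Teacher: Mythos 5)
Your proposal is correct in substance but follows a genuinely different route from the paper. The paper's proof is purely qualitative: it invokes (without computation) the existence of a $\mu_2$-eigenfunction invariant both under permutation of the edges at $v_0$ and under permutation of the remaining edges, and then uses Theorem~\ref{thm:noDisconnect} --- disconnecting all nonzero local maxima cannot disconnect the graph --- together with the symmetry to force monotonicity on the $v_0$-edges and a single interior minimum at each remaining midpoint. Crucially, that argument never needs the value of $\mu_2(\Gamma)$. You instead build the totally symmetric eigenfunction explicitly; your Kirchhoff computation, the quantisation $(1-\cos k)[1+(V-1)\cos k]=0$, the choice $\cos k=-1/(V-1)$, and the edgewise extremum analysis (including $R=B/\cos(k/2)<B<0$ at the midpoints) are all correct. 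What your route buys is an explicit formula for the eigenfunction and for $\mu_2(\Gamma)=\arccos(-1/(V-1))^2$; what it costs is that you must separately certify that no positive eigenvalue lies below $k^2$, a step the paper's argument sidesteps entirely. You flag this honestly and the tool you name does close it: by the standard equilateral (von Below) correspondence, the eigenvalues in $(0,\pi^2)$ are exactly the $k^2$ with $\cos k$ a nontrivial eigenvalue of the transition matrix of $K_V$, whose spectrum is $\{1,-1/(V-1)\}$, so $\mu_2(\Gamma)=k^2$ as claimed --- but as written this step is a sketch, and a complete write-up would need to carry it out (or replace it by the paper's symmetrisation-plus-Theorem~\ref{thm:noDisconnect} argument, which avoids it). It is worth noting that the paper's own starting point, the existence of the doubly invariant $\mu_2$-eigenfunction, is likewise asserted with the proof omitted, so both arguments lean on one standard but unwritten ingredient.
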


\begin{figure}
\label{fig:complete-ef}
\begin{tikzpicture}[scale=1.2]
\coordinate (c) at (0,-1);
\coordinate (d) at (3,-1);
\coordinate (e) at (1.5,-0.2);
\coordinate (f) at (1.5,1);
\coordinate (g) at (1.5,-1);
\coordinate (h) at (0.75,-0.6);
\coordinate (i) at (2.25,-0.6);
\draw[thick] (c) edge (d);
\draw[thick] (c) edge (e);
\draw[thick] (c) edge (f);
\draw[thick] (d) edge (e);
\draw[thick] (d) edge (f);
\draw[thick] (e) edge (f);
\draw[fill] (c) circle (1.5pt);
\draw[fill] (d) circle (1.5pt);
\draw[fill] (e) circle (1.5pt);
\draw[lightgray,fill] (f) circle (1.5pt);
\draw[lightgray,fill] (g) circle (1.5pt);
\draw[lightgray,fill] (h) circle (1.5pt);
\draw[lightgray,fill] (i) circle (1.5pt);
\node at (f) [anchor=south west] {$v_0$};
\node at (g) [anchor=north] {$v_1$};
\node at (h) [anchor=south] {$v_2$};
\node at (i) [anchor=south] {$v_3$};
\end{tikzpicture}
\caption{The complete graph admits an eigenfunction for $\mu_2$ whose maximum is at $v_0$ and minimum is achieved at the other $v_k$, with no other critical points.}
\end{figure}
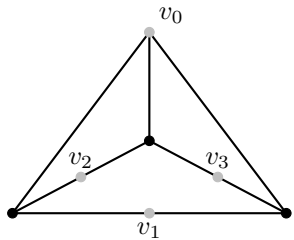

\begin{proof}
Fix $v_0$ and denote by $e_1,\ldots,e_{V-1}$ the edges incident to $v_0$. Then by symmetry of $\Gamma$, there exists an eigenfunction $\psi$ for $\mu_2 (\Gamma)$ which is invariant under permutation of the edges $e_1,\ldots,e_{V-1}$, \emph{and} which is invariant under permutation of any two other edges not incident to $v_0$, which we will denote by $e_V,\ldots,e_E$. (We omit the elementary proof of this claim, which follows from the same reasoning as, for example, \cite[Lemma~5.4(1)]{BKKM18}.)

Now $\psi$ cannot have a local extremum on any of the edges $e_1,\ldots,e_{V-1}$, since if it did, by invariance under permutation it would have (without loss of generality) a local maximum on {\em every} edge $e_1,\ldots,e_{V-1}$, a contradiction to Theorem~\ref{thm:noDisconnect} below. Hence it is monotonic on each of these edges. Symmetry (that is, invariance under permutation) now implies that $\psi$ must have a maximum or a minimum, say a maximum, at $v_0$, which is global on $e_1 \cup \dots \cup e_{V-1}$.

Now the invariance of $\psi$ under permutation of the other edges $e_V,\ldots,e_E$, plus the fact that $\psi$ takes on the same value at all the vertices of $e_1,\ldots,e_{V-1}$ different from $v_0$, means that $\psi$ must be symmetric about the midpoint of each of the edges $e_V,\ldots,e_E$. A further application of Theorem~\ref{thm:noDisconnect} (applied to $\psi$ or $-\psi$, as appropriate) when combined with the symmetry arguments means that $\psi$ can have no local maxima on $e_V,\ldots,e_E$, and it can have at most one local minimum on each of these edges. The only possibility is that $\psi$ reaches its global minimum at the respective midpoints of $e_V,\ldots,e_E$, and is otherwise monotonic.
\end{proof}

\begin{proof}[Proof of Proposition~\ref{prop:m-gamma-examples}]
\emph{Equilateral pumpkin:} Fix any edge $e \in \cE$, which we identify with $[0,1]$. As noted in Example~\ref{ex:pumpkin}, there exists an eigenfunction $\psi_0$ which takes the form $\psi_0 (x) = \cos (\pi x)$, $x \in [0,1]$, both on this and every other edge, and another eigenfunction $\psi_1 (x) = \sin (\pi x)$, $x \in [0,1]$, which is negative on another edge and zero on the rest. Note that $\psi_0$ and $\psi_1$ have their unique global maximum at $0$ and $1/2$, respectively. By Lemma~\ref{lem:Mgeneration} (applicable since $\mu_2 (\Gamma) = \pi^2$, so that $\pi/(2\sqrt{\mu_2 (\Gamma)}) = 1/2 = \dist_e (0,1/2)$), for any $y \in (0,1/2)$ there exists an eigenfunction of the form $\psi_y:=\psi_0 + \alpha_y \psi_1$, $\alpha_y >0$, which has its unique maximum in $[0,1/2]$ at $y$.

We claim that $\psi_y$ actually reaches its global maximum at $y$. On $[1/2,1] \subset e$, we have $\psi_y' = \psi_0' + \alpha_y \psi_1' < 0$, so $y$ is certainly the unique maximum on $e$. Since $\psi_y |_e (y) > \psi_y |_e (0) = \psi_0 |_e (0) = 1$ and $\psi_1 \leq 0$ on all the other edges, meaning that $\psi_y \leq \psi_0 \leq 1$ there, it is the unique global maximum on the equilateral pumpkin.

We have thus shown (still under the identification $e \simeq [0,1]$) that $[0,1/2] \subset M$. Symmetry with respect to the midpoint (equivalently, reversing the parametrisation of the edges) yields $[1/2, 1] \subset M$ as well, that is, $e \subset M$. Since $e$ was arbitrary, we conclude that in this case $M$ is the whole equilateral pumpkin.

\emph{Equilateral complete graph:} Obviously we may assume that $V\geq 4$, since $V=3$ corresponds to a loop. The argument is similar to the one for the pumpkin, but now we use the eigenfunctions described in Lemma~\ref{lem:complete-ef}. More precisely, again let $e \simeq [0,1]$ be any edge, where $0$ corresponds to some vertex $v_0$. Then there exists an eigenfunction $\psi_0$ taking its unique global maximum at $0$ and another eigenfunction $\psi_1$ which has a global maximum at $1/2$ (although this maximum is not unique). Since $\mu_2 (\Gamma) < \pi^2$ (cf.\ \cite[Example~3.3 and Theorem~4.2]{KKMM16}), $\psi_0(1/2), \psi_1 (0) > 0$ and Lemma~\ref{lem:Mgeneration} is applicable. Hence for any $y \in [0,1/2]$ there exists a unique $\alpha_y>0$ such that $\psi_y = \psi_0 + \alpha_y \psi_1$ has its unique maximum in $[0,1/2]$ at $y$.

As before, $\psi_y$ is strictly decreasing on $[1/2,1]$ and so $y$ is the unique maximum on $e$. Now fix another edge $\tilde e$. We distinguish between two cases: (1) $\psi_1$ does not have a maximum on $\tilde e$; (2) $\psi_1$ {\em does} have a maximum on $\tilde e$.

In case (1), by the monotonicity of $\psi_1$ on $\tilde e$ we have that $\psi_1|_{\tilde e} \leq \psi_1(v_0) = \psi_1 |_{\tilde e} (0)$. Since $\psi_0$ reaches its global maximum at $v_0$, $\psi_y|_{\tilde e} = \psi_0|_{\tilde e} + \alpha_y \psi_1|_{\tilde e} \leq \psi_0(v_0) + \alpha_y \psi_1|_e (0) = \psi_y (v_0) < \psi_y |_e (y)$, where the strict inequality follows from the uniqueness statement in Lemma~\ref{lem:Mgeneration}. In case (2), both $\psi_0$ and $\psi_1$ are invariant under permutations of $e$ and $\tilde e$, and hence so is $\psi_y$. In particular, $\psi_y$ reaches a unique maximum on $\tilde e$ which is equal to $\psi_y(y)$.

At any rate, $y$ remains a global maximum of $\psi_y$, whence $y \in M$ and hence $[0,1/2] \subset M$.

Finally, as before, since the orientation of the edge and the choice of edge itself were arbitrary, we conclude that the whole complete graph is contained in $M$.
\end{proof}

\begin{proof}[Proof of Proposition~\ref{prop:m-finite-or-uncountable}]
If $M_{\loc}$ is finite, then there is nothing to prove; so suppose it is infinite. Since $\Gamma$ is assumed to have a finite number of edges it suffices to prove that on any edge $e$ of $\Gamma$ the set of connected components of $M_{\loc} \cap e$ is finite. But since each edge has finite length, this is a direct consequence of Lemma~\ref{lem:Mgeneration}: fix $\varepsilon > 0$ small enough (any $\varepsilon \leq \pi/(2 \sqrt{\mu_2(\Gamma)})$ will do), then whenever $x_1,x_2 \in e$ satisfy $\dist_e (x_1,x_2) < \varepsilon$ and $x_1,x_2 \in M_{\loc}$, it follows that $[x_1,x_2] \subset M_{\loc}$.
\end{proof}

\subsection{Uniqueness of minimum and maximum up to a small perturbation: proof of Theorem~\ref{thm:m-generically-2}}
\label{sec:number-generic}

We show that by an arbitrarily small perturbation of the edge lengths and possibly attaching small pendant edges we can always achieve a metric graph with unique ``coldest and hottest points''. We recall that for a metric graph $\Gamma$ the underlying discrete graph is the unique discrete graph that, together with the length function, constitutes $\Gamma$.

The following lemma will be essential to the proof of Theorem~\ref{thm:m-generically-2}.

\begin{lemma}\label{lem:boundary}
Let $\Gamma$ be different from a path or a cycle, let $k \geq 2$ and denote by $\psi$ an eigenfunction corresponding to $\mu_k (\Gamma)$. Then for each $\eps > 0$ there exists a graph $\Gamma_\eps$ obtained from $\Gamma$ by modifying the length of each edge by less than $\eps$ such that $\mu_k (\Gamma_\eps)$ is simple and the corresponding eigenfunction has pairwise distinct values on the boundary vertices.
\end{lemma}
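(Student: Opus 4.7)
The plan is a two-phase perturbation of the edge lengths. In the first phase, by Friedlander's genericity theorem~\cite{F05}, the set $U \subset \R_+^E$ of edge-length vectors for which $\mu_k$ is simple is open and residual; the exclusion of cycles in the hypothesis is precisely what makes this applicable, since a cycle graph has structurally double eigenvalues (cf.\ Example~\ref{ex:cycle}). Hence an arbitrarily small perturbation of the edge lengths of $\Gamma$ places us in $U$. In the second phase, on $U$, analytic perturbation theory \cite[Chapter~VII]{K95} gives that $\mu_k(y)$ and an associated normalized eigenfunction $\psi_y$ depend real-analytically on $y \in U$ (with a locally consistent sign convention). Consequently, for each pair $\{i,j\}$ of distinct indices corresponding to boundary vertices $v_i, v_j \in \partial \Gamma$, the function
\begin{align*}
    h_{ij}(y) := \psi_y(v_i) - \psi_y(v_j)
\end{align*}
is real-analytic on each connected component of $U$.

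By the identity principle for real-analytic functions, the zero set of $h_{ij}$ in any connected component $U_0$ of $U$ is either all of $U_0$ or a proper real-analytic subvariety, in which case it is closed and nowhere dense in $U_0$. Provided the latter alternative always holds, the open dense complement
\begin{align*}
    U \setminus \bigcup_{\{i,j\}} h_{ij}^{-1}(0)
\end{align*}
is the finite intersection of the open dense sets $U \setminus h_{ij}^{-1}(0)$, and hence contains a point arbitrarily close to the edge-length vector of the original $\Gamma$; the associated metric graph is the required $\Gamma_\eps$.

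The crux of the argument is therefore to show $h_{ij} \not\equiv 0$ on any connected component of $U$, and this is where the assumption that $\Gamma$ is neither a path nor a cycle is used, guaranteeing that $\Gamma$ has enough structure (in particular, a vertex of degree $\geq 3$) for a non-trivial perturbation to exist. My plan is to vary only the length $L(e_i)$ of the unique edge $e_i$ incident to $v_i$, with all other edge lengths fixed. Letting $w_i$ denote the other endpoint of $e_i$, solving $-\psi'' = \mu_k \psi$ on $e_i$ with the Neumann condition at $v_i$ yields the explicit relation
\begin{align*}
    \psi_y(v_i) = \frac{\psi_y(w_i)}{\cos\bigl(\sqrt{\mu_k(y)}\,L(e_i)\bigr)}.
\end{align*}
As $L(e_i)$ varies, the right-hand side possesses a characteristic $1/\cos$-type singularity structure in $L(e_i)$, whereas $\psi_y(v_j)$ is real-analytic in $L(e_i)$ without such a pole. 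An identity $\psi_y(v_i) \equiv \psi_y(v_j)$ over an interval of $L(e_i)$-values would force $\psi_y(w_i)$ to vanish at every $L(e_i)$ producing $\cos(\sqrt{\mu_k(y)}L(e_i))=0$, contradicting the generic non-vanishing of eigenfunctions at vertices established in~\cite{BeLi17}.

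The main obstacle will be making this ``pole-comparison'' fully rigorous, which requires tracking the analyticity of $\mu_k(L(e_i))$, $\psi_{L(e_i)}(w_i)$, and $\psi_{L(e_i)}(v_j)$ carefully across the relevant interval and ruling out the possibility that the analytic continuations conspire to cancel. A clean route is to Taylor expand around the endpoint $L(e_i) \to 0$, where $\Gamma$ degenerates to the graph obtained by contracting the pendant $e_i$; the appendix on convergence of eigenfunctions under edge contraction provides the needed control, and the resulting finite-order expansion of $h_{ij}$ together with the genericity of non-vanishing from~\cite{BeLi17} completes the argument.
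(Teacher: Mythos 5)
Your overall strategy (Friedlander genericity to make $\mu_k$ simple on an open residual set $U$, analyticity of $y \mapsto \psi_y(v_i)-\psi_y(v_j)$ on components of $U$, identity principle, finite intersection of open dense sets) is a reasonable skeleton, and it differs genuinely from the paper, which instead runs a gluing argument: it perturbs two pendant edges while keeping their total length fixed, so that identifying $v_1$ with $v_2$ produces the \emph{same} graph $\widetilde\Gamma$ before and after the perturbation; if the equality $\psi(v_1)=\psi(v_2)$ persisted, both eigenfunctions would descend to $\widetilde\Gamma$ as eigenfunctions for the same eigenvalue (by a quantitative eigenvalue-separation choice of $\delta$), contradicting the simplicity of the eigenvalues of $\widetilde\Gamma$ arranged in advance via \cite{BeLi17}.

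However, the central step of your argument --- showing $h_{ij}\not\equiv 0$ on a component of $U$ --- has a genuine gap. On the pendant edge $e_i$, parametrised from $v_i$, the Neumann condition forces $\psi_y|_{e_i}(x)=\psi_y(v_i)\cos(\sqrt{\mu_k(y)}\,x)$, hence
\begin{align*}
\psi_y(w_i)=\psi_y(v_i)\cos\bigl(\sqrt{\mu_k(y)}\,L(e_i)\bigr).
\end{align*}
Consequently, \emph{whenever} $\cos(\sqrt{\mu_k(y)}\,L(e_i))=0$ at a point of simplicity, the value $\psi_y(w_i)$ vanishes automatically, for every graph and every eigenfunction --- your claimed conclusion ``$\psi_y(w_i)$ vanishes at every resonant $L(e_i)$'' is unconditionally true and does not use the hypothesis $h_{ij}\equiv 0$ at all. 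In particular there is no pole: $\psi_y(v_i)$ is a vertex value of a normalised eigenfunction, hence bounded and analytic, and the apparent $1/\cos$ singularity is always cancelled. Nor is there any contradiction with \cite{BeLi17}: that is a genericity statement and does not exclude exceptional parameter values, let alone vanishing that is structurally forced along a one-parameter family. So the ``pole-comparison'' extracts no information from the assumption $h_{ij}\equiv 0$, and the non-degeneracy step is not established. The fallback via a Taylor expansion at $L(e_i)\to 0$ is also not supported as written: the appendix of the paper provides only $C^0$ convergence of eigenfunctions under edge contraction, not an expansion; the limit graph's $\mu_k$ need not be simple; and the needed inequality $\psi_0(w_i)\neq\psi_0(v_j)$ on the contracted graph is a statement of the very type you are trying to prove. (A secondary point: real-analytic dependence of $\mu_k$ and of the vertex values of $\psi_y$ on the edge lengths, with the Hilbert space varying, is true for simple eigenvalues but needs a pull-back argument or a secular-equation argument and a reference; the paper never uses it.) To repair your approach you would need an independent proof that $h_{ij}$ is not identically zero on the relevant component --- for instance by an argument in the spirit of the paper's gluing construction, which is precisely what supplies that non-degeneracy.
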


\begin{proof}
We may assume that, after an arbitrarily small perturbation of the edge lengths, each eigenvalue of $\Gamma$ as well as each eigenvalue of any graph that we may obtain from $\Gamma$ by gluing together any pair of boundary vertices is simple and the corresponding eigenfunction is nonzero on all vertices; since there are only finitely many such graphs, the main result of \cite{BeLi17} guarantees that this is possible. In particular, we may find some $\delta > 0$ such that the distance between any two distinct eigenvalues of index no larger than $k$ on any of these graphs is larger than $\delta$. 

Now let $\eps > 0$ and assume that $v_1, v_2$ are two distinct boundary vertices such that 
\begin{align}\label{eq:soGehtsLos}
 \psi (v_1) = \psi (v_2).
\end{align}
Let $e_1, e_2$ be the edges incident to $v_1$ and $v_2$, respectively. Moreover, let $\Gamma'$ be any graph obtained from $\Gamma$ by changing the lengths of $e_1$ and $e_2$ by less than $\eps$ in such a way that the new lengths satisfy 
\begin{align}\label{eq:crucial}
 L' (e_1) + L' (e_2) = L (e_1) + L (e_2)
\end{align}
and keeping the lengths of all other edges, $L' (e) = L (e)$ for all $e$ with $e \neq e_1, e_2$. Thereby we choose $L' (e_j)$ so close to $L (e_j)$, $j = 1, 2$, that 
\begin{align}\label{eq:close}
 \left| \mu_k (\Gamma) - \mu_k (\Gamma') \right| < \delta
\end{align}
and such that $\mu_k (\Gamma')$ is still simple. Take the eigenfunction $\hat \psi$ on $\Gamma'$ corresponding to $\mu_k (\Gamma')$ and assume for a contradiction that still
\begin{align}\label{eq:stillEqual}
 \hat \psi (v_1) = \hat \psi (v_2).
\end{align}
Consider the graph $\widetilde \Gamma$ obtained from $\Gamma$ by joining the vertices $v_1, v_2$. Note that this metric graph is the same that one gets from joining $v_1$ and $v_2$ in $\Gamma'$ due to~\eqref{eq:crucial}. Moreover, by~\eqref{eq:soGehtsLos} and~\eqref{eq:stillEqual} both functions $\psi$ and $\hat \psi$ can be interpreted as eigenfunctions on $\widetilde \Gamma$ corresponding to the eigenvalues $\mu_k (\Gamma)$ and $\mu_k (\Gamma')$, respectively, and according to variational principles,
\begin{align*}
 \mu_k (\Gamma) = \mu_m (\widetilde \Gamma) \quad \text{and} \quad \mu_k (\Gamma') = \mu_n (\widetilde \Gamma)
\end{align*}
for certain indices $m, n \leq k$. However, comparing this with~\eqref{eq:close} and the initial choice of $\delta$ yields $\mu_k (\Gamma) = \mu_k (\Gamma')$ and thus, $\psi$ and $\hat \psi$ are eigenfunctions on $\widetilde \Gamma$ corresponding to the same eigenvalue. Since $\psi$ and $\hat \psi$ take critical points very close to each other but not at exactly the same point, they are linearly independent, which contradicts the simplicity of the eigenvalues of $\widetilde \Gamma$.

The above argument reduces the number of pairs of boundary vertices on which $\psi$ has equal values by one. We may now apply it inductively to every pair of vertices with equal value, for a successively smaller value of $\varepsilon$ each time which preserves non-equality of all pairs of distinct values, to obtain the conclusion of the lemma.
\end{proof}

We can now proceed to the proof of Theorem~\ref{thm:m-generically-2}.

\begin{proof}[Proof of Theorem~\ref{thm:m-generically-2}]
If $\Gamma$ is a cycle graph then by attaching an arbitrarily small edge to an arbitrary point on the cycle we obtain a lasso graph, which satisfies the assertion of the theorem, see~Example~\ref{ex:lasso}. Otherwise we may assume from the beginning that $\mu_2 (\Gamma)$ is simple and that the corresponding eigenfunction $\psi$ is nonzero on each vertex, see~\cite{BeLi17}. Let $\eps > 0$. First we will attach pendant edges of length less than $\eps / 2$ to each point (without loss of generality a vertex of degree at least two) on $\Gamma \setminus \partial \Gamma$ on which $\psi$ takes its global maximum or minimum and, at the same time, shorten the lengths of all edges incident to that vertex by less than $\eps / 2$, by an amount to be specified precisely later. Let $v \in M (\Gamma) \setminus \partial \Gamma$ and let $e_1, \dots, e_d$ be the edges incident to $v$, assumed to be parametrised away from $v$. As $\psi$ takes its maximum at $v$, one has
\begin{align*}
 \psi_{e_j} (x) = \psi (v) \cos (k x), \qquad x \in [0, L (e_j)], \quad j = 1, \dots, d,
\end{align*}
where $k = \sqrt{\mu_2 (\Gamma)}$. Let $x_0 < L (e_j)$ for $j = 1, \dots, d$, and let $\Gamma'$ be the metric graph obtained from $\Gamma$ by shortening the length of each of the edges $e_1, \dots, e_d$ by $x_0$ and attaching one additional pendant edge $e_{d + 1}$ of some length $\eta$ (to be determined later) to $v$. On the edges $e_j$, parametrised as $[x_0, L (e_j)]$, $j = 1, \dots, d$, we keep the eigenfunction $\psi$ as before, while on $e_{d + 1}$, parametrised away from $v$ as $[0, \eta]$, we define
\begin{align*}
 \psi_{e_{d + 1}} (x) = \psi (v) \cos (k x) - \sum_{j = 1}^d \psi_{e_j}' (x_0) \sin (k x), \qquad x \in [0, \eta],
\end{align*}
and we choose $\eta$ such that $\psi_{e_{d + 1}}' (\eta) = 0$. Then $\eta$ depends smoothly on $x_0>0$ small, with $\eta \to 0$ as $x_0 \to 0$. In particular, we may choose $x_0$ such that both $x_0$ and $\eta$ are less than $\eps$. The function on $\Gamma'$ thus obtained is an eigenfunction of $- \Delta_{\Gamma'}$ corresponding to the eigenvalue $k^2 = \mu_2 (\Gamma)$, and if $\eps$ is chosen sufficiently small, then by simplicity of $\mu_2$ under perturbations and continuity of the low eigenvalues as $\varepsilon \to 0$, see \cite{BLS19}, we have that $\mu_2 (\Gamma') = k^2$ and this eigenvalue is still simple. After applying the same procedure to each point in $M (\Gamma)$ which is not a boundary vertex, we arrive at a graph $\Gamma'$ such that $M (\Gamma') \subset \partial \Gamma'$.

It remains to apply Lemma~\ref{lem:boundary} to $\Gamma'$ and $\eps / 2$ instead of $\eps$ to obtain a graph $\Gamma_\eps$ such that $\mu_2 (\Gamma_\eps)$ is simple and the corresponding eigenfunction has pairwise distinct values at all boundary edges. If the perturbation of the edge lengths is chosen sufficiently small, by continuity we have $M (\Gamma_\eps) \subset \partial \Gamma_\eps$ and, in particular, $|M (\Gamma_\eps)| = 2$.
\end{proof}

\subsection{Graphs with a finite number of hot spots: proof of Proposition~\ref{prop:number}}
\label{sec:number-examples}

Proposition~\ref{prop:number} follows from the following explicit example.

\begin{example}
For any $n \geq 2$, there exists a graph $\Gamma_n$ such that $\mu_2 (\Gamma_n)$ is simple and $|M| = |M_\loc| = n$. Indeed, start with a path graph (an interval) of length $1$ and attach at one end $n-1$ equal edges of length $\varepsilon>0$ each to form $\Gamma_n$; that is, $\Gamma_n$ is an $n$-star with one long and $n-1$ short edges. Denote by $v_0$ the unique vertex of degree $n$. Then by a standard argument, cf.~\cite[Section~5.1]{BKKM18}, we may choose a basis of eigenfunctions on $\Gamma_n$ such that each is either invariant with respect to permutations of the short edges (``even'') or zero at $v_0$ and supported on exactly two short edges (``odd''); moreover, each even eigenfunction is the unique even eigenfunction in its eigenspace (up to scalar multiples). The non-constant even eigenfunction with smallest corresponding eigenvalue is monotonic along the graph, reaching its global minimum at (say) the end of the longer edge and its global maximum at the end of each of the $n-1$ shorter edges. Call its eigenvalue $\mu$. Then one may easily adapt the proof of \cite[Lemma~5.5]{BKKM18} to show that every odd eigenfunction has an eigenvalue strictly larger than $\mu$; hence, $\mu=\mu_2(\Gamma)$ and this eigenvalue is simple; in particular, $|M|=n$, and as $M = \partial \Gamma$, Corollary~\ref{cor:treelocalhotspots} yields $M_\loc = M$.
\end{example}

\section{On the location of the hot spots}
\label{sec:location}

\subsection{Hot spots and bridges: proof of Theorem~\ref{thm:doublyConnectedPart}}
\label{sec:location-bridges}

The next theorem has a number of consequences for the location of the hot spots and will, in particular, lead to the proof of Theorem~\ref{thm:doublyConnectedPart}. 

In what follows, by {\em disconnecting} a vertex $v_0$ of $\Gamma$ we understand the result of replacing $v_0$ by $\deg v_0$ vertices of degree one. More precisely, we replace $\Gamma$ by a graph with vertex set $(\cV \setminus \{v_0\}) \cup \{v_1, \dots, v_{\deg v_0}\}$ and edge set $(\cE \setminus (\cE_{v_0, \rm i} \cup \cE_{v_0, \rm t})) \cup \{e_1, \dots, e_{\deg v_0} \}$, where $v_1, \dots, v_{\deg v_0}$ are vertices of degree one and $e_j$ connects $v_j$ to a vertex which was previously adjacent to $v_0$ in the original graph $\Gamma$, $j = 1, \dots, \deg v_0$. As any interior point of an edge can be interpreted as a vertex of degree two, we speak accordingly of disconnecting arbitrary points on $\Gamma$. 

\begin{theorem}\label{thm:noDisconnect}
Given $\Gamma$, let $\psi$ be any eigenfunction of $- \Delta_\Gamma$ corresponding to the eigenvalue~$\mu_2 (\Gamma)$. Then disconnecting all points in
\begin{align*}
 M^+_{\psi, \loc} := \left\{ x \in \Gamma : \psi (x)~\text{is a nonzero local maximum for}~\psi \right\}
\end{align*}
keeps the graph connected.
\end{theorem}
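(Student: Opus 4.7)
The plan is a proof by contradiction using the variational characterisation
\begin{displaymath}
\mu_2(\Gamma) = \inf\Bigl\{ R(f) : f \in H^1(\Gamma),\ \textstyle\int_\Gamma f\,\d x = 0 \Bigr\}, \qquad R(f) := \frac{\int_\Gamma |f'|^2 \, \d x}{\int_\Gamma |f|^2 \, \d x}.
\end{displaymath}
Assume that the graph $\Gamma'$ obtained from $\Gamma$ by disconnecting all points of $M^+_{\psi,\loc}$ is disconnected, with connected components $\Gamma_1, \dots, \Gamma_k$, $k \geq 2$. The goal is to build $\chi \in H^1(\Gamma)$ with $\int_\Gamma \chi\,\d x = 0$ and $R(\chi) < \mu_2(\Gamma)$.

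First I would establish the key structural fact that, at each $v \in M^+_{\psi,\loc}$, the derivative of $\psi$ in the direction away from $v$ along every incident edge is zero. Indeed, since $v$ is a local maximum with $\psi(v)>0$ (by Lemma~\ref{lem:maxPos}), each such outward derivative is $\leq 0$; combined with the Kirchhoff condition (zero sum), all of them must vanish. Consequently $\psi|_{\Gamma_j}$ lies in $\dom(-\Delta_{\Gamma_j})$ (with the newly created degree-one vertices coming from $M^+_{\psi,\loc}$ automatically satisfying the standard condition), so that $-\Delta_{\Gamma_j}(\psi|_{\Gamma_j}) = \mu_2(\Gamma)\,\psi|_{\Gamma_j}$. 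Since $\mu_2(\Gamma)>0$ and $\psi|_{\Gamma_j}\not\equiv 0$ (it is positive on the disconnection points on $\partial \Gamma_j$), orthogonality to the constant eigenfunction of $-\Delta_{\Gamma_j}$ forces $\int_{\Gamma_j} \psi\,\d x = 0$; in particular $\psi|_{\Gamma_j}$ is sign-changing.

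The test function is constructed most transparently in the base case of a single cut vertex $v$ producing $\Gamma' = \Gamma_1 \sqcup \Gamma_2$. Set $\chi := \psi$ on $\Gamma_1$ and $\chi := \psi(v)$ (constant) on $\Gamma_2$. Continuity at $v$ holds trivially, so $\chi \in H^1(\Gamma)$. Using $\int_{\Gamma_1} \psi\,\d x = 0$ and integrating by parts against $\psi|_{\Gamma_1}$ with the Neumann condition established above, one computes $\int_\Gamma |\chi'|^2\,\d x = \mu_2(\Gamma)\int_{\Gamma_1} \psi^2\,\d x$ and
\begin{displaymath}
\int_\Gamma (\chi - \bar\chi)^2\,\d x = \int_{\Gamma_1} \psi^2\,\d x + \psi(v)^2\,\frac{L(\Gamma_1)L(\Gamma_2)}{L(\Gamma)},
\end{displaymath}
where $\bar\chi$ is the mean of $\chi$ on $\Gamma$. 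Since $\psi(v)>0$ and all edge lengths are positive, this yields $R(\chi-\bar\chi) < \mu_2(\Gamma)$, contradicting the variational characterisation.

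For the general case I would pick a component $\Gamma_{j^*}$ and define $\chi = \psi$ on $\Gamma_{j^*}$; on $\Gamma \setminus \Gamma_{j^*}$ take $\chi$ to be the piecewise-linear harmonic extension with Dirichlet data $\chi(v) = \psi(v)$ at each disconnection point $v \in \partial\Gamma_{j^*}$ (and Kirchhoff at the remaining interior vertices). This $\chi$ is continuous on $\Gamma$, and because $\psi|_{\Gamma\setminus\Gamma_{j^*}}$ is an admissible but non-harmonic extension (it would have to vanish identically to be harmonic, contradicting $\psi>0$ on $\partial\Gamma_{j^*}$), one has the strict Dirichlet energy gain
\begin{displaymath}
\int_{\Gamma\setminus\Gamma_{j^*}} |\chi'|^2\,\d x < \int_{\Gamma\setminus\Gamma_{j^*}} |\psi'|^2\,\d x = \mu_2(\Gamma)\int_{\Gamma\setminus\Gamma_{j^*}} \psi^2\,\d x.
\end{displaymath}
Combined with the $\int_{\Gamma_{j^*}}\psi\,\d x = 0$ identity, one then seeks to upgrade this to $R(\chi-\bar\chi) < \mu_2(\Gamma)$, again yielding a contradiction.

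The main obstacle is precisely the bookkeeping in this last step: one must verify that the strict loss of Dirichlet energy from the harmonic extension outweighs the contributions of the variance term arising from the possibly non-constant boundary values $\{\psi(v) : v \in \partial\Gamma_{j^*}\}$. The cleanest route is to choose $\Gamma_{j^*}$ so that $\psi$ is constant on $\partial\Gamma_{j^*}$ (which reduces the argument directly to the base-case computation above), and I would expect that such a ``good'' component always exists---e.g., a component meeting $\partial\Gamma_{j^*}$ in a single disconnection point, which in turn can be produced by passing to a minimal cut subset of $M^+_{\psi,\loc}$ realising the disconnection of $\Gamma$.
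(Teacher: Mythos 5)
Your preparatory steps are correct and coincide with what the paper uses implicitly: at each $v \in M^+_{\psi,\loc}$ the Kirchhoff condition together with $\psi(v)>0$ and the one-sided sign of the outgoing derivatives forces all of those derivatives to vanish, so each component $\Gamma_j$ carries $\psi|_{\Gamma_j}$ as a standard-Laplacian eigenfunction for $\mu_2(\Gamma)>0$, whence $\int_{\Gamma_j}\psi\,\d x = 0$. Your base case is also correct and complete whenever some component has \emph{constant} boundary values under $\psi$: the test function ``$\psi$ on that component, the constant $\psi(v)$ elsewhere'' has Dirichlet energy $\mu_2(\Gamma)\int_{\Gamma_1}\psi^2\,\d x$ but strictly larger variance, so its Rayleigh quotient drops below $\mu_2(\Gamma)$. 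For that special case this is a genuinely different, and arguably more elementary, argument than the paper's.

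The gap is the general case, and it is real on both routes you sketch. On the harmonic-extension route, the strict energy gain $\int_{\Gamma\setminus\Gamma_{j^*}}|h'|^2\,\d x < \mu_2(\Gamma)\int_{\Gamma\setminus\Gamma_{j^*}}\psi^2\,\d x$ does not yield $R(\chi-\bar\chi)<\mu_2(\Gamma)$, because replacing $\psi$ by $h$ also shrinks the denominator; what is actually needed is $\int_{\Gamma\setminus\Gamma_{j^*}}|h'|^2\,\d x < \mu_2(\Gamma)\bigl(\int_{\Gamma\setminus\Gamma_{j^*}}h^2\,\d x - \tfrac{1}{L(\Gamma)}(\int_{\Gamma\setminus\Gamma_{j^*}}h\,\d x)^2\bigr)$, an inequality that is not implied by the energy gain and can fail when the boundary data differ substantially over a small region (a linear interpolant of far-apart values on a short edge has large energy and small variance). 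On the reduction route, a minimal cut subset of $M^+_{\psi,\loc}$ need \emph{not} produce a component with a single boundary point: two cut points lying on a common cycle form a minimal disconnecting set whose two components each contain both points in their boundary, with a priori different values of $\psi$; and you give no argument that a component with constant boundary values exists. The paper avoids this entirely by exploiting the sign change you already established: it tests with $c_j\psi$ on the negativity sets $\Gamma_j^- = \{\psi<0\}\cap\Gamma_j$ of two components, extended by zero. These pieces have identically vanishing boundary values, so continuity is automatic whatever the values at the cut points; the Rayleigh quotient then equals $\mu_2(\Gamma)$ exactly, forcing the test function to be an eigenfunction, and the contradiction follows from unique continuation (zero value and zero derivative at the interface). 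To repair your proof you would need either to prove the existence of a component with constant boundary values -- which appears not to hold at the required level of generality -- or to pass to zero-boundary-value pieces as the paper does.
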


As the example of the loop shows, removing the set of all nonzero local \emph{extrema} of a given eigenfunction corresponding to $\mu_2 (\Gamma)$ \emph{can} disconnect the graph $\Gamma$. We also point out that the resulting connected graph after disconnecting all nonzero local maxima does not need to be a tree and may also be equal to $\Gamma$ (which is the case exactly when all local maxima of $\psi$ lie on $\partial \Gamma$).

\begin{proof}[Proof of Theorem~\ref{thm:noDisconnect}]
Assume the converse, i.e., after disconnecting all points in $M^+_{\psi, \loc}$ the graph has at least two connected components $\Gamma_1$ and $\Gamma_2$. As $\psi$ is strictly positive on each point of $M^+_{\psi, \loc}$, see Lemma~\ref{lem:maxPos}, the subsets
\begin{align*}
 \Gamma_j^- := \big\{ x \in \Gamma_j : \psi (x) < 0 \big\}, \quad j = 1, 2,
\end{align*}
treated as subsets of $\Gamma$ have positive distance to each other and are nonempty as the restriction $\psi |_{\Gamma_j}$ is a non-constant eigenfunction of $- \Delta_{\Gamma_j}$ and thus has a vanishing integral over $\Gamma_j$, $j = 1, 2$. Define a function $\widetilde \psi : \Gamma \to \C$ by
\begin{align*}
 \widetilde \psi (x) = \begin{cases} c_1 \psi (x), & \text{for}~x \in \Gamma_1^-,\\
                                     c_2 \psi (x), & \text{for}~x \in \Gamma_2^-, \\
                                     0, & \text{otherwise},
                       \end{cases}
\end{align*}
where $c_1, c_2 \in \R \setminus \{0\}$ are chosen such that $\int_\Gamma \widetilde \psi (x) \d x = 0$. Then $\widetilde \psi \in H^1 (\Gamma)$. As $\psi |_{\Gamma_j^-}$ is an eigenfunction of $- \Delta_{\Gamma_j^-}$ corresponding to the eigenvalue $\mu_2 (\Gamma)$ which vanishes on the set 
\begin{align}\label{eq:boundary}
 (\Gamma \setminus \Gamma_j^-) \cap \overline{\Gamma_j^-}
\end{align}
of connection points with the remainder of $\Gamma$, $j = 1, 2$, we have
\begin{align*}
 \int_{\Gamma_j^-} \big|\widetilde \psi' (x) \big|^2 \d x & = c_1^2 \int_{\Gamma_j^-} |\psi' (x)|^2 \d x = c_1^2 \mu_2 (\Gamma) \int_{\Gamma_j^-} |\psi (x)|^2 \d x \\
 & = \mu_2 (\Gamma) \int_{\Gamma_j^-} \big|\widetilde \psi (x) \big|^2 \d x
\end{align*}
for $j = 1, 2$. Hence,
\begin{align*}
 \int_{\Gamma} \big|\widetilde \psi ' \big|^2 \d x & = \int_{\Gamma_1^-} \big|\widetilde \psi' \big|^2 \d x + \int_{\Gamma_2^-} \big|\widetilde \psi' \big|^2 \d x = \mu_2 (\Gamma) \int_\Gamma |\widetilde \psi (x) \big|^2 \d x.
\end{align*}
Together with $\int_\Gamma \widetilde \psi (x) \d x = 0$ this implies that $\widetilde \psi$ is an eigenfunction of $- \Delta_\Gamma$ corresponding to the eigenvalue $\mu_2 (\Gamma)$. In particular, $\widetilde \psi$ satisfies the Kirchhoff condition on both of the sets~\eqref{eq:boundary}. On the other hand, $\widetilde \psi$ has a fixed sign on $\Gamma_1^-$ and on $\Gamma_2^-$, so that each single derivative has to vanish at each point in~\eqref{eq:boundary}, that is, $\widetilde \psi$ has local extrema on all of~\eqref{eq:boundary}. As the values of $\widetilde \psi$ there are all equal to zero, it follows that $\widetilde \psi$ is constantly equal to zero on each edge incident to~\eqref{eq:boundary}. This contradicts the fact that $\psi$ is strictly negative in the interior of $\Gamma_j^-$, $j = 1, 2$.
\end{proof}

Now Theorem~\ref{thm:doublyConnectedPart} follows easily.

\begin{proof}[Proof of Theorem~\ref{thm:doublyConnectedPart}]
Let $x \in M_\loc$. Then there exists an eigenfunction $\psi$ of $- \Delta_\Gamma$ corresponding to $\mu_2 (\Gamma)$ that takes a nonzero local maximum at $x$. In particular, by Theorem~\ref{thm:noDisconnect} removing $x$ from $\Gamma$ does not disconnect the graph, which implies the assertion.
\end{proof}

\subsection{Hot spots and diameter: proof of Propositions~\ref{prop:hotspotsClose} and~\ref{prop:starDiameter}}
\label{sec:location-diameter}

This subsection is devoted to the distance between the ``coldest'' and ``hottest'' spots on a graph. In~\cite[Section~3]{KRpamm} an example of a finite, compact, connected tree graph $\Gamma$ was given for which $\mu_2 (\Gamma)$ is simple and the points realising the maximum and the minimum do not realise the diameter,
\begin{equation}
\label{eq:maxdiamratio}
 \frac{\max \{ \dist (x,y): x,y \in M \}}{\diam(\Gamma)}  < 1.
\end{equation}
In what follows we provide a modification of this example that shows that the ratio in~\eqref{eq:maxdiamratio} can become arbitrarily small. This proves Proposition~\ref{prop:hotspotsClose}.

\begin{example}\label{ex:krpamm}
We are going to construct a tree $\Gamma$ with the properties claimed in Proposition~\ref{prop:hotspotsClose} by means of a splitting procedure. We start with a graph $\Gamma^*$ being an equilateral star with four edges $e_1, e_2, e_3, e_4$ each of which has length $1/2$, around a central vertex $v_0$, see Figure~\ref{fig:example}.
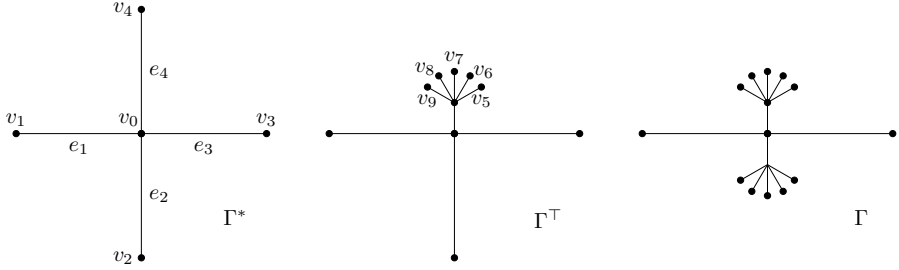
\begin{figure}[htb]
  \centering
  \resizebox{12cm}{!}{
  \begin{tikzpicture}
    %star graph
    \begin{scope}[shift={(6.5,0)}]
    \draw[fill] (-1,0) circle(0.05);
    \draw[fill] (1,0) circle(0.05);
    \draw[fill] (3,0) circle(0.05);
    \draw (-1,0) -- (3,0);
    \draw[fill] (1,-2) circle(0.05);
    \draw[fill] (1,0) circle(0.05);
    \draw[fill] (1,2) circle(0.05);
    \draw (1,-2) -- (1,2);
    \node at (0.8,0) [anchor=south] {$v_0$};
    \node at (-1,0) [anchor=south] {$v_1$};
    \node at (1,-2) [anchor=east] {$v_2$};
    \node at (3,0) [anchor=south] {$v_3$};
    \node at (1,2) [anchor=east] {$v_4$};
    \node at (0,0) [anchor=north] {$e_1$};
    \node at (1,-1) [anchor=west] {$e_2$};
    \node at (2,0) [anchor=north] {$e_3$};
    \node at (1,1) [anchor=west] {$e_4$};
    \node at (2.5,-1.1) [anchor=north] {$\Gamma^*$};
    \end{scope}
    %halftree
    \begin{scope}[shift={(11.5,0)}]
    \draw[fill] (-1,0) circle(0.05);
    \draw[fill] (1,0) circle(0.05);
    \draw[fill] (3,0) circle(0.05);
    \draw (-1,0) -- (3,0);
    \draw[fill] (1,-2) circle(0.05);
    \draw[fill] (1,0) circle(0.05);
    \draw[fill] (1,0.5) circle(0.05);
    \draw (1,-2) -- (1,1);
    \draw[fill] (1.25,0.93) circle(0.05);
    \draw[fill] (1.43,0.75) circle(0.05);
    \draw[fill] (1,1) circle(0.05);
    \draw[fill] (0.75,0.93) circle(0.05);
    \draw[fill] (0.57,0.75) circle(0.05);
    \draw (1,0.5) -- (1.25,0.93);
    \draw (1,0.5) -- (1.43,0.75);
    \draw (1,0.5) -- (0.75,0.93);
    \draw (1,0.5) -- (0.57,0.75);
    \node at (1.43,0.75) [anchor=north] {$v_5$};
    \node at (1.18,0.99) [anchor=west] {$v_6$};
    \node at (1,1) [anchor=south] {$v_7$};
    \node at (0.82,1.04) [anchor=east] {$v_8$};
    \node at (0.57,0.75) [anchor=north] {$v_9$};
    \node at (2.5,-1.1) [anchor=north] {$\Gamma^\top$};
    \end{scope}
    %example tree
    \begin{scope}[shift={(16.5,0)}]
    \draw[fill] (-1,0) circle(0.05);
    \draw[fill] (1,0) circle(0.05);
    \draw[fill] (3,0) circle(0.05);
    \draw (-1,0) -- (3,0);
    \draw[fill] (1,0) circle(0.05);
    \draw[fill] (1,0.5) circle(0.05);
    \draw (1,-1) -- (1,1);
    \draw[fill] (1.25,0.93) circle(0.05);
    \draw[fill] (1.43,0.75) circle(0.05);
    \draw[fill] (1,1) circle(0.05);
    \draw[fill] (0.75,0.93) circle(0.05);
    \draw[fill] (0.57,0.75) circle(0.05);
    \draw (1,0.5) -- (1.25,0.93);
    \draw (1,0.5) -- (1.43,0.75);
    \draw (1,0.5) -- (0.75,0.93);
    \draw (1,0.5) -- (0.57,0.75);
    \draw[fill] (1.25,-0.93) circle(0.05);
    \draw[fill] (1.43,-0.75) circle(0.05);
    \draw[fill] (1,-1) circle(0.05);
    \draw[fill] (0.75,-0.93) circle(0.05);
    \draw[fill] (0.57,-0.75) circle(0.05);
    \draw (1,-0.5) -- (1.25,-0.93);
    \draw (1,-0.5) -- (1.43,-0.75);
    \draw (1,-0.5) -- (0.75,-0.93);
    \draw (1,-0.5) -- (0.57,-0.75);
    \node at (2.5,-1.1) [anchor=north] {$\Gamma$};
    \end{scope}
  \end{tikzpicture}
  }
  \caption{The star graph $\Gamma^*$, the intermediate tree $\Gamma^\top$, and the final, symmetric tree $\Gamma$ in the case $m = 5$.}
  \label{fig:example}
\end{figure}
Then $\mu_2 (\Gamma^*) = \pi^2$ and this eigenvalue has multiplicity 3. In what follows we assume that the edges $e_2, e_4$ are parametrised along the path $\cP$ from $v_2$ to $v_4$, that is, we identify $\cP$ with the interval $[0, 1]$, where $v_0$ is thus identified with $1/2$. Then the function $\psi^*$ acting as $- \cos(\pi x)$ on $\cP$ and being identically equal to zero on the remainder of $\Gamma^*$ is an eigenfunction of $- \Delta_{\Gamma^*}$ corresponding to $\mu_2 (\Gamma^*)$. On the path $\cP \, \widehat{=} \, [0, 1]$ let $x_0 = 1/2 + \eps$ for some $\eps \in (0, 1/2)$. At $x_0$ we split the path (i.e.\ the edge $e_4$) in a balanced way along the eigenfunction to get a new graph $\Gamma^\top$. Namely, we replace $e_4$ by a new edge $\hat e_4$ of length $\eps$ emanating from $v_0$ and $m$ edges $e_5, \dots, e_{4 + m}$ connecting $x_0$ to new boundary vertices $v_5, \dots, v_{4 + m}$, where $m \in \N$ is arbitrary, and we let each of these $m$ edges have length 
\begin{align*}
 L (e_j) = \frac{\arctan\frac{A}{m F}}{\pi}, \quad j = 5, \dots, 4 + m,
\end{align*}
where $A = \sin(\pi x_0)$ and $F = - \cos (\pi x_0)$. We parametrise the edges $e_5, \dots, e_{4 + m}$ outgoing from $x_0$ and define a function $\psi^\top$ on $\Gamma^\top$ by letting $\psi^\top$ be equal to $\psi^*$ on $e_1, e_2, e_3, \hat e_4$ and 
\begin{align*}
 \psi_{e_j}^\top (x) = F \cos (\pi x) + \frac{A}{m} \sin (\pi x), \quad x \in (0, L (e_j)), \quad j = 5, \dots, 4 + m.
\end{align*}
Obviously $\psi^\top$ satisfies $- \psi_e^{\top \prime \prime} = \pi^2 \psi_e^\top$ inside every edge $e$ of $\Gamma$, and it can be checked by calculation that $\psi^\top$ satisfies the  continuity and Kirchhoff conditions at $x_0$ as well as Neumann vertex conditions at $v_5, \dots, v_{4 + m}$. Hence, $\psi^\top \in \ker (- \Delta_{\Gamma^\top} - \pi^2)$. Moreover, $\psi^\top$ takes its maximum (with value $\sqrt{A^2 / m^2 + F^2}$) only at the vertices $v_5, \dots, v_{4 + m}$ and its minimum only at the vertex $v_2$. Moreover, we have
\begin{align*}
 \dist (v_0, v_j) = \eps + \frac{\arctan \frac{A}{m F}}{\pi} = \eps + \frac{\arctan \frac{\tan \big( \pi (1/2 - \eps) \big)}{m}}{\pi},
\end{align*}
and we can make this arbitarily small by choosing $\eps$ sufficiently small and $m$ sufficiently large. In the next step we construct our final tree $\Gamma$ from $\Gamma^\top$ by splitting $e_2$ at $1/2 - \eps$ in $m$ edges along $\psi^\top$ in an analogous way. The resulting function $\psi$ on the tree $\Gamma$ is an eigenfunction of $- \Delta_\Gamma$ corresponding to the eigenvalue $\pi^2$. We claim that $\pi^2$ is still the smallest nontrivial eigenvalue of $\Gamma$; indeed, this follows from \cite[Theorem~3.18(4)]{BKKM18}: since $\Gamma^*$ can be obtained from $\Gamma$ by ``unfolding'' the pendant edges $e_5,\ldots,e_{4+m}$ at $x_0 \widehat{=} 1/2 + \varepsilon$ and the corresponding pendant edges at $1/2-\varepsilon$, we have $\pi^2 = \mu_2 (\Gamma^\ast) \leq \mu_2 (\Gamma)$, and conclude that $\mu_2 (\Gamma) = \pi^2$. Moreover, for any two points $x, y$ on $\Gamma$ such that $\psi$ takes its maximum at $x$ and its minimum at $y$ we have
\begin{align*}
 \dist (x, y) = 2 \eps + 2 \frac{\arctan \frac{\tan \big( \pi (1/2 - \eps) \big)}{m}}{\pi},
\end{align*}
which can be made arbitrarily small by choosing $m$ sufficiently large. On the other hand, $\diam \Gamma = 1$.

Note that $\pi^2$ still has multiplicity $3$ in the spectrum of $\Gamma$, and that $v_1,v_3 \in M(\Gamma)$ still. However, if we lengthen each of the short edges $e_5, \ldots $ by an arbitrary $\delta > 0$ to obtain a new graph $\widetilde{\Gamma}$, then by \cite[Corollary~3.12(2)]{BKKM18}, we have $\mu_2 (\widetilde{\Gamma}) < \mu_2 (\Gamma)$; moreover, one may argue exactly as in \cite[Section~3]{KRpamm} to show that $\mu_2 (\widetilde{\Gamma})$ is simple and its eigenfunction is supported on the complement of $e_1 \cup e_3$: in particular, all points in $M(\widetilde{\Gamma})$ are arbitrarily close to each other.
\end{example}

\begin{proof}[Proof of Proposition~\ref{prop:starDiameter}]
We assume that the star $\Gamma$ consists of at least three edges; otherwise it is a path graph and the claim is clearly true.

Let $\psi$ be as in the proposition. Denote by $v_0$ the star vertex (the only vertex with degree larger than one) and let us first consider the case where $\psi (v_0) = 0$. In this case, if each edge of $\Gamma$ is parametrised from $v_0$ towards the boundary vertex then $\psi_e (x) = A_e \sin (k x)$ holds for every edge $e$, where $A_e \in \R$ and $k = \sqrt{\mu_2 (\Gamma)}$. As $\psi$ has no local extremum in the interior of an edge, see Theorem~\ref{thm:doublyConnectedPart}, but has a vanishing derivative at the boundary vertex, this implies that for each $e \in \cE$ either $L (e) = \pi / (2 k)$ or $A_e = 0$. In other words, each edge in the support of $\psi$ has length $\pi / (2 k)$. On the other hand, each edge on which $\psi$ vanishes identically (if any) has length no greater than  $\pi / (2 k)$: if $e$ is an edge with $L (e) > \pi / (2 k)$ then 
\begin{align*}
 \diam \Gamma \geq \frac{\pi}{2 k} + L (e) > \frac{\pi}{k},
\end{align*}
and this contradicts the fact that $k \leq \pi/\diam \Gamma$ for any tree, see~\cite[Theorem~3.4]{R17}. As $\psi$ takes its maximum and minimum only on $\partial \Gamma$, the assertion of the proposition follows.

Let us now consider the case $\psi (v_0) \neq 0$. Then $\psi_e$ is necessarily non-vanishing for each $e \in \cE$. Observe that in this case $\psi$ has exactly one zero: firstly, it is clear that $\psi$ has at least one zero as $\int_\Gamma \psi \, \d x = 0$. Assume that $\psi$ has two different zeros. These zeros must lie on two different edges as otherwise $\psi$ would have a local extremum inside an edge. But then each of these two edges must have length strictly larger than $\pi / (2 k)$, which implies $\diam \Gamma > \pi / k$, which again contradicts~\cite[Theorem~3.4]{R17}. Thus $\psi$ has exactly one zero. Let us denote the edge that contains the zero by $e_0$. Without loss of generality we may assume that $\psi$ is negative between this zero and the boundary vertex $\hat v$ to which $e_0$ is incident and positive on the remainder of $\Gamma$. In particular, the minimum of $\psi$ is taken on $\hat v$ only, and $\psi$ has nonzero local maxima at all further boundary vertices. Let us assume further that $\Gamma$ is parametrised as a rooted tree with root $\hat v$ in the direction from the root towards the remaining boundary vertices. Let us denote by $e_1, \dots, e_{E - 1}$ the remaining edges. Define
\begin{align*}
 F = \psi_{e_0} (L (e_0)) = \psi (v_0) > 0 \qquad \text{and} \qquad A = \frac{\psi_{e_0}' (L (e_0))}{k} > 0.
\end{align*}
Due to the continuity and Kirchhoff conditions at $v_0$ we have
\begin{align*}
 \psi_{e_j} (x) = F \cos (k x) + A_j \sin (k x), \quad x \in [0, L (e_j)], \quad j =1, \dots, E - 1,
\end{align*}
where $A_1, \dots, A_{E - 1}$ are positive numbers such that $\sum_{j = 1}^{E - 1} A_j = A$. As $\psi_{e_j}$ is monotically increasing on $[0, L (e_j)]$, $j = 1, \dots, E - 1$ and takes its maximum at the endpoint corresponding to $L (e_j)$, it follows by an elementary calculation that the edge length is related to the slope $A_j$ through
\begin{align}\label{eq:lengthAndSlope}
 L (e_j) = \frac{\arctan \frac{A_j}{F}}{k}
\end{align}
and that the maximal value of $\psi_{e_j}$ is
\begin{align}\label{eq:maxValue}
 \psi_{e_j} (L (e_j)) = \sqrt{A_j^2 + F^2}
\end{align}
for $j = 1, \dots, E - 1$. By~\eqref{eq:maxValue}, the latter value is maximised among all these edges if $A_j$ is maximised, and by~\eqref{eq:lengthAndSlope} this is the case if and only if $L (e_j)$ is maximal. Hence, the global maximum of $\psi$ on $\Gamma$ is taken only at boundary vertices corresponding to edges with maximal length among $e_1, \dots, e_{E - 1}$. Any such edge $e_{j_0}$ satisfies $L (e_{j_0}) < \frac{\pi}{2 k} < L (e_0)$ by the above reasoning or, alternatively, by~\eqref{eq:lengthAndSlope}. It follows that
\begin{align*}
 \diam \Gamma = L (e_0) + L (e_{j_0})
\end{align*}
and the right-hand side equals the distance between the unique point $y = \hat v$ where $\psi$ takes its minimum and any point $x$ where $\psi$ takes its maximum. This completes the proof.
\end{proof}

To summarise, in the special case of trees, any global extrema must lie on the boundary, but they do \emph{not} need to lie as far apart from each other as possible. However, the latter is true for any star graph. 

\begin{remark}
\label{rem:flowerDiameter}
The conclusion of Proposition~\ref{prop:starDiameter} also holds for flowers; that is, if $\Gamma$ is any flower, without loss of generality with at least $3$ petals, and $\psi$ is any eigenfunction of $\mu_2 (\Gamma)$ with a global maximum at $x$ and a global minimum at $y$, then $\dist (x,y) = \diam \Gamma$. To see this, simply observe that all eigenfunctions associated with $\mu_2 (\Gamma)$ are (reflection) symmetric with respect to the midpoint of each petal (cf.\ Example~\ref{ex:flower}). Hence, if we fix an eigenfunction $\psi$ associated with $\Gamma$ and denote by $\widetilde \Gamma$ the star formed by replacing each petal $e$ by a pendant edge of length $L (e) / 2$, then by \cite[Corollary~3.6]{BKKM18} we have that $\mu_2 (\widetilde \Gamma) = \mu_2 (\Gamma)$, and $\psi$ may be identified canonically with an eigenfunction $\tilde\psi$ on $\widetilde \Gamma$ corresponding to $\mu_2 (\widetilde \Gamma)$. The result of Proposition~\ref{prop:starDiameter} applied to $\widetilde\Gamma$ and $\tilde\psi$ now yields the corresponding statement for $\Gamma$ and $\psi$.
\end{remark}

\section{Graph topology and hot spots: proof of Theorem~\ref{thm:examplesummary}}
\label{sec:topology}

\begin{proof}[Proof of Theorem~\ref{thm:examplesummary}]
The reasoning is always based on the continuity of the eigenfunctions with respect to varying the edge lengths described in Theorem~\ref{thm:appendix}. The idea is similar in all four cases and we discuss only the first case in full detail. 

(i) If $|\partial \cG| \geq 1$ we choose an edge $e_0$ incident to a vertex of degree one. We obtain an incarnation of $\cG$ as a metric graph $\Gamma (\ell)$ by setting $\ell = \{L (e)\}_{e \in \cE}$, where $L (e_0) = 1$ and $L (e) = \delta$ for a given $\delta > 0$ to be specified shortly and all edges $e \neq e_0$ (cf.\ the appendix). Our aim is to compare the eigenfunction on $\Gamma (\ell)$ corresponding to $\mu_2 (\Gamma (\ell))$ with the respective eigenfunction on the metric graph $\Gamma$ obtained from $\cG$ by setting $\widetilde L (e_0) = 1$ and $\widetilde L (e) = 0$ for all $e \neq e_0$. Then the standard Laplacian on $\Gamma$ can be identified with the Neumann Laplacian on the interval $[0, 1]$ and $\mu_2 (\Gamma) = \pi^2$ is a simple eigenvalue. By Theorem~\ref{thm:appendix} $\mu_2 (\Gamma (\ell))$ is simple for all sufficiently small $\delta$ and for each $\eps > 0$ we may choose $\delta$ so small that the corresponding eigenfunctions $\psi_{2, \ell}$ and $\psi_2$ on $\Gamma (\ell)$ and $\Gamma$, respectively,
satisfy
\begin{align}\label{eq:dasKlappt}
 \sup_{x \in \Gamma (\ell)} \Big| \psi_{2, \ell} (x) - \Big(\cJ_\ell \psi_2 \Big) (x) \Big| < \eps,
\end{align}
where the rescaling operator $\cJ_\ell$ is defined as in~\eqref{eq:Jell}. Note that the function $\psi_2$ can be chosen such that its maximum equals one and is taken at the endpoint corresponding to the vertex of degree one in $\cG$ to which $e_0$ is incident and the function $\cJ_\ell \psi_2$ defined on $\Gamma (\ell)$ is then constantly equal to $- 1$ on every edge apart from $e_0$. Thus, as long as $\eps < \frac{1}{2}$,~\eqref{eq:dasKlappt} implies that the value of $\psi_{2, \ell}$ at the boundary vertex belonging to $e_0$ is larger than $\frac{1}{2}$ while on the edges different from $e_0$ the function $\psi_{2, \ell}$ is bounded by $\frac{1}{2}$. As $\psi_{2, \ell}$ cannot attain its global maximum inside the edge $e_0$ by Theorem~\ref{thm:doublyConnectedPart}, the global maximum must lie on the boundary vertex of $e_0$.

(ii) If $|\partial \cG| \geq 2$ we choose two edges corresponding to vertices of degree one, set the corresponding edge lengths to one and make all other lengths small to obtain  a metric graph $\Gamma (\ell)$ corresponding to $\cG$. We then compare $\Gamma (\ell)$ with the eigenfunction of the graph $\Gamma$ with all lengths zero apart from the two chosen edges that have length one each. This graph can be identified with the interval $[0, 2]$ and the corresponding eigenfunction takes its minimum and maximum on the endpoints of the interval only. Now the eigenfunction on $\Gamma (\ell)$ is a small perturbation of the one on $\Gamma$ and has again minimum and maximum at the two chosen boundary vertices only.

(iii) In the case $\beta \geq 1$ we may assume that also $|\partial \cG| \geq 1$; otherwise we are in the situation of~(iv) as $\cG$ is not a cycle graph. Let $e_0$ be an edge in the doubly connected part and let $e_1$ be an edge incident to a boundary vertex. We choose $\Gamma (\ell)$ to be the metric graph associated with $\cG$ for which $L (e_0) = L (e_1) = 1$ and $L (e) = \delta$ for a sufficiently small $\delta > 0$ for all $e$ different from $e_0$ and $e_1$. Then the graph $\Gamma$ to compare with is the lasso graph consisting of a loop $e_0$ and a boundary edge $e_1$ attached to it, each of them having length one (see also Example~\ref{ex:lasso}). As for this graph the eigenvalue $\mu_2 (\Gamma)$ is simple and the eigenfunction can be chosen to have its maximum at the midpoint of the loop only (and its minimum at the boundary vertex), also on $\Gamma (\ell)$ the second eigenfunction takes its maximum only inside $e_0$.

(iv) Let us now assume $\beta \geq 2$ and let $e_0$, $e_1$ be two edges located in two different cycles of $\cG$. We let $L (e_0) = L (e_1) = 1$ and make all further edges sufficiently small in the graph $\Gamma (\ell)$. Then $\Gamma$ corresponds to the figure-8 graph consisting of two loops of length one each. The corresponding eigenfunction on the figure-8 graph takes, without loss of generality, its minimum at the midpoint of $e_0$ only and its maximum at the midpoint of $e_1$ only. From this the statement for $\Gamma (\ell)$ follows as in the earlier cases.
\end{proof}

\begin{example}
In order to illustrate the proof of Theorem~\ref{thm:examplesummary} we look at the discrete graph $\cG$ consisting of a 3-pumpkin with a boundary edge attached to each of the two vertices, see Figure~\ref{fig:perturbedPumpkin}. 
\begin{figure}[h]
\begin{tikzpicture}
\coordinate (a) at (0,0);
\coordinate (b) at (3,0);
\coordinate (c) at (-2,0);
\coordinate (d) at (5,0);
\draw[fill] (a) circle (1.5pt);
\draw[fill] (b) circle (1.5pt);
\draw[fill] (c) circle (1.5pt);
\draw[fill] (d) circle (1.5pt);
\draw[thick] (a) edge (b);
\draw[thick, bend left=45] (a) edge (b);
\draw[thick, bend right=45] (a) edge (b);
\draw[thick] (c) edge (a);
\draw[thick] (d) edge (b);
\end{tikzpicture}
\caption{The discrete graph $\cG$, a ``pumpkin on a stick''.}
\label{fig:perturbedPumpkin}
\end{figure}
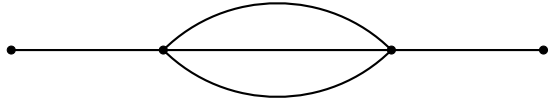
Figure~\ref{fig:abcd} indicates possible choices of edge lengths that lead to different locations of hot spots, either both or one or none of them being on the boundary.
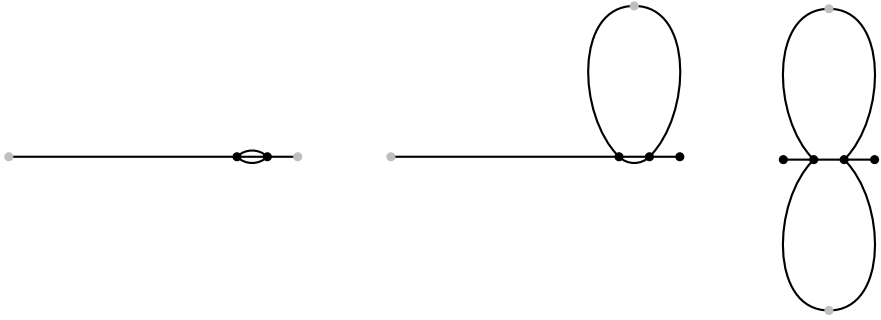
\begin{figure}[h]
\begin{minipage}[b]{3.5cm}
\begin{tikzpicture}
% (i) / (ii)
\coordinate (a) at (0,0);
\coordinate (b) at (0.4,0);
\coordinate (c) at (-3,0);
\coordinate (d) at (0.8,0);
\draw[fill] (a) circle (1.5pt);
\draw[fill] (b) circle (1.5pt);
\draw[thick] (a) edge (b);
\draw[thick, bend left=45] (a) edge (b);
\draw[thick, bend right=45] (a) edge (b);
\draw[thick] (c) edge (a);
\draw[thick] (d) edge (b);
\draw[lightgray, fill] (c) circle (1.5pt);
\draw[lightgray, fill] (d) circle (1.5pt);
\end{tikzpicture}
\end{minipage} \qquad \qquad
\begin{minipage}[b]{3.5cm}
\begin{tikzpicture}
% (iii)
\coordinate (a) at (0,0);
\coordinate (b) at (0.4,0);
\coordinate (c) at (-3,0);
\coordinate (d) at (0.8,0);
\coordinate (h) at (0.2,2);
\draw[fill] (a) circle (1.5pt);
\draw[fill] (b) circle (1.5pt);
\draw[thick] (a) edge (b);
\draw[thick] (a) to[out=135,in=-180] (h);
\draw[thick] (h) to[out=0,in=45] (b);
\draw[thick, bend right=45] (a) edge (b);
\draw[thick] (c) edge (a);
\draw[thick] (d) edge (b);
\draw[lightgray, fill] (c) circle (1.5pt);
\draw[fill] (d) circle (1.5pt);
\draw[lightgray, fill] (h) circle (1.5pt);
\end{tikzpicture}
\end{minipage} \qquad \qquad
\begin{minipage}[c]{1.5cm}
\begin{tikzpicture}
% (iv)
\coordinate (a) at (0,0);
\coordinate (b) at (0.4,0);
\coordinate (c) at (-0.4,0);
\coordinate (d) at (0.8,0);
\coordinate (h) at (0.2,2);
\coordinate (-h) at (0.2,-2);
\draw[fill] (a) circle (1.5pt);
\draw[fill] (b) circle (1.5pt);
\draw[thick] (a) edge (b);
\draw[thick] (a) to[out=135,in=-180] (h);
\draw[thick] (h) to[out=0,in=45] (b);
\draw[thick] (a) to[out=225,in=-180] (-h);
\draw[thick] (-h) to[out=0,in=-45] (b);
\draw[thick] (c) edge (a);
\draw[thick] (d) edge (b);
\draw[fill] (c) circle (1.5pt);
\draw[fill] (d) circle (1.5pt);
\draw[lightgray, fill] (h) circle (1.5pt);
\draw[lightgray, fill] (-h) circle (1.5pt);
\end{tikzpicture}
\end{minipage}
\caption{Metric graph incarnations of the discrete graph~$\cG$ from Figure~\ref{fig:perturbedPumpkin} together with their hot spots (grey).}
\label{fig:abcd}
\end{figure}
\end{example}

\section{Further conjectures and remarks}
\label{sec:conjectures}

We finish by providing a number of further observations, questions and conjectures about properties of the size and location of the sets $M$ and $M_\loc$ that we expect to hold. In many cases, especially with the examples, we strongly expect that with enough effort the ideas presented could be made rigorous, but this would go beyond the scope of this work. Here, as always, we are only interested in graphs satisfying Assumption~\ref{ass}.

\subsection{Conjectures about the number of hot spots}
\label{sec:conj-number}

We start with the cardinality of $M$. We saw in Proposition~\ref{prop:m-finite-or-uncountable} that the set of all possible local extrema, $M_{\loc}$, is always either finite or uncountable (in fact, it always has a finite number of connected components). If we can find eigenfunctions $\psi_1$ and $\psi_2$ whose respective local maxima are close enough together, then we can form appropriate linear combinations having a local maximum in between those of $\psi_1$ and $\psi_2$; this is the statement of Lemma~\ref{lem:Mgeneration}. It seems natural to expect that the same is true globally.

\begin{conjecture}
\label{conj:m-size}
The set $M$ always has a finite number of connected components; in particular, it is either finite or uncountable.
\end{conjecture}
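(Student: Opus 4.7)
The plan is to promote Lemma~\ref{lem:Mgeneration} from $M_{\loc}$ to $M$: namely, to produce a constant $\delta=\delta(\Gamma)>0$ such that whenever $x_1,x_2\in M$ lie on a common edge $e$ with $\dist_e(x_1,x_2)\le\delta$, the entire segment $[x_1,x_2]$ between them is contained in $M$. Combined with Assumption~\ref{ass}, this would force the connected components of $M\cap e$ on each edge $e$ to be pairwise at distance greater than $\delta$, and so $M$ itself would have only finitely many connected components. As a preliminary step, $M$ is always closed: if $x_n\in M$ with $x_n\to x$, one chooses eigenfunctions $\psi_n$ with $\max\psi_n=\psi_n(x_n)=1$ and uses compactness of the unit sphere of the finite-dimensional eigenspace of $\mu_2(\Gamma)$ to extract a subsequential $C(\Gamma)$-limit $\psi$ satisfying $\psi(x)=\max\psi=1$, whence $x\in M$.

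To produce $\delta$, the approach would mimic the proof of Lemma~\ref{lem:Mgeneration}: given $x_1,x_2\in M$ on $e$ with $L:=\dist_e(x_1,x_2)<\pi/(2k)$, where $k=\sqrt{\mu_2(\Gamma)}$, pick eigenfunctions $\psi_0,\psi_1$ with $\max\psi_j=\psi_j(x_j)=1$ for $j=0,1$; these are necessarily linearly independent, since otherwise $\psi_0,\psi_1$ would be sinusoidal functions on $e$ having two distinct maxima at distance less than $\pi/(2k)$, an impossibility. Form $\psi_y=\psi_0+\alpha_y\psi_1$ as in Lemma~\ref{lem:Mgeneration}; its restriction to $e$ has a unique interior maximum at $y\in[x_1,x_2]$ of value $A=\sqrt{1+2\alpha_y\cos(kL)+\alpha_y^2}$, while off $e$ the naive pointwise estimate gives only $\psi_y\le\psi_0+\alpha_y\psi_1\le 1+\alpha_y$. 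The two elementary identities $(1+\alpha_y)^2-A^2=2\alpha_y(1-\cos kL)=O(L^2)$ and $A^2-(1+\alpha_y\cos kL)^2=\alpha_y^2\sin^2 kL$ quantify the gap between $A$ and these bounds; together with uniform continuity of the evaluation map on the compact unit sphere of the eigenspace, they should suffice in the generic situation in which $\psi_0$ and $\psi_1$ do not share an additional common maximiser in $\Gamma\setminus\overline{e}$.

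The main obstacle is the degenerate case where $\psi_0$ and $\psi_1$ do share a common maximiser $z_0\in\Gamma\setminus\overline{e}$: then $\psi_y(z_0)=1+\alpha_y>A$ and the construction breaks down. Overcoming this requires either exploiting the (then necessarily higher-dimensional) eigenspace to perturb $\psi_0,\psi_1$ into eigenfunctions with no shared off-edge maximiser, or passing to the convex-analytic reformulation $M=\ell^{-1}(\partial C)$, where $V$ denotes the eigenspace, $\ell_x\in V^*$ is the evaluation functional $\psi\mapsto\psi(x)$, and $C=\mathrm{conv}\{\ell_x:x\in\Gamma\}\subset V^*$; on each edge one has $\ell|_e(x)=\cos(kx)\,\alpha_e+\sin(kx)\,\beta_e$, a real-analytic arc living in a two-dimensional subspace of $V^*$. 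In that picture, finiteness of the connected components of $\ell|_e^{-1}(\partial C)$ reduces to a question about the intersection of an analytic arc with the boundary of the convex hull of finitely many analytic arcs, a problem that should be amenable to a subanalytic/$o$-minimal argument. I expect this convex-analytic route, rather than the direct combinatorial one, to be the most robust way to close the remaining gap.
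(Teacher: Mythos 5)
First, be aware that the statement you are addressing is stated in the paper as Conjecture~\ref{conj:m-size}: the paper offers \emph{no} proof of it, and explicitly names as the obstruction ``controlling the interaction of $\psi_1$ and $\psi_2$ a long way from the maxima in question, as an eigenfunction may potentially have multiple global maxima at considerable distance from each other''. Your direct route (promoting Lemma~\ref{lem:Mgeneration} from $M_{\loc}$ to $M$) runs into exactly this obstruction, and as written it does not close the gap. Indeed the difficulty is slightly worse than the ``shared exact maximiser'' degeneracy you isolate: if $z_0\in\Gamma\setminus\overline{e}$ is another global maximiser of $\psi_0$, then comparing $\bigl(1+\alpha_y\psi_1(z_0)\bigr)^2$ with $A^2=1+2\alpha_y\cos(kL)+\alpha_y^2$ shows that the construction already fails whenever $\psi_1(z_0)$ merely exceeds roughly $\cos(kL)$; since $\cos(kL)\to1$ as $x_1,x_2$ approach one another, you would need a quantitative gap $1-\psi_1(z_0)$ of order $L^2$, \emph{uniform} over all pairs $x_1,x_2\in M$ on $e$, and no such uniform bound is supplied. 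That uniformity is precisely the unresolved interaction problem the paper points to. (Your preliminary observations --- closedness of $M$ via compactness of $\{\psi\in V:\max_\Gamma\psi=1\}$, and linear independence of $\psi_0,\psi_1$ --- are correct but do not touch the core difficulty.)

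The genuinely valuable part of your proposal is the convex-analytic reformulation, which you only gesture at but which, if carried out, appears to give a complete proof --- so you should discard the first route and execute the second. The identity $M=\ell^{-1}(\partial C)$ is correct: $K:=\ell(\Gamma)$ affinely spans $V^*$ (otherwise some nonzero eigenfunction would be constant on $\Gamma$, impossible for $\mu_2(\Gamma)>0$), so $C=\mathrm{conv}(K)$ has nonempty interior, and a point $\ell_x\in K$ lies on a supporting hyperplane of $C$ --- equivalently, $x$ is a global maximiser of some nonzero $\psi\in V$ --- if and only if $\ell_x\in\partial C$. Now each coordinate of $\ell|_e$ is a trigonometric function restricted to a compact interval, hence definable in the o-minimal structure $\R_{\mathrm{an}}$ of globally subanalytic sets; $K$ is a finite union of definable compact arcs; by Carath\'eodory's theorem $C$ is the image of the definable compact set $\Delta_d\times K^{d+1}$ under a polynomial map, hence definable, and therefore so are $\interior C$, $\partial C$ and $(\ell|_e)^{-1}(\partial C)=M\cap e$. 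A definable subset of $\R$ in an o-minimal structure is a finite union of points and open intervals, so $M\cap e$ has finitely many connected components for each of the finitely many edges, which proves the conjecture. As submitted, however, your text presents this only as an expectation (``should be amenable to''), so the proposal stands as a well-aimed attack with an acknowledged gap rather than a proof; filling in the Carath\'eodory and o-minimality steps above would turn it into one, and would in fact settle a question the paper leaves open.
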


The difficulty lies in controlling the interaction of $\psi_1$ and $\psi_2$ a long way from the maxima in question, as an eigenfunction may potentially have multiple global maxima at considerable distance from each other. Actually, it seems reasonable to ask a stronger question.

\begin{question}
Is it true that $M$ is either finite or equal to $\Gamma$?
\end{question}

The only known examples where $M$ is infinite (equilateral pumpkins, equilateral complete graphs; see Proposition~\ref{prop:m-gamma-examples}) have this property because of a high eigenvalue multiplicity and different eigenfunctions supported throughout the graph. To generate a graph $\Gamma$ for which $M$ is infinite but $M \subsetneq \Gamma$, one would need to find distinct eigenfunctions both (or all) supported within a proper subset of the graph. At least for the standard Laplacian, it seems unlikely that such eigenfunctions could be generated; the particular case of graphs with bridges should be easier to handle.

\begin{conjecture}
Suppose the graph $\Gamma$ has a bridge. Then $M$ and even $M_{\loc}$ are finite.
\end{conjecture}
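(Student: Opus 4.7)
The plan is to argue by contradiction, combining Proposition~\ref{prop:m-finite-or-uncountable} and Lemma~\ref{lem:Mgeneration} with a rigidity analysis at the bridge. By Proposition~\ref{prop:m-finite-or-uncountable} and the trivial inclusion $M \subset M_\loc$, it suffices to show that $M_\loc$ cannot be uncountable. Assume for contradiction it is; then $M_\loc$ contains a nondegenerate closed subinterval $I = [x_1, x_2]$ of some edge $e^\star$, which after shrinking may be assumed to satisfy $|I| \leq \pi/(2\sqrt{\mu_2(\Gamma)})$. By Theorem~\ref{thm:doublyConnectedPart}, $I$ is disjoint from the interior of the bridge $e$; denote by $\Gamma_1, \Gamma_2$ the two components of $\Gamma \setminus \interior e$, let $v_j \in \Gamma_j$ be the endpoints of $e$, and assume without loss of generality that $e^\star \subset \Gamma_1$. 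Lemma~\ref{lem:Mgeneration} supplies fixed eigenfunctions $\psi_{x_1}, \psi_{x_2} \in V := \ker(-\Delta_\Gamma - \mu_2(\Gamma))$ whose unique local maxima on $I$ are at $x_1, x_2$ respectively, and for every $y \in I$ an eigenfunction $\psi_y = \psi_{x_1} + \alpha_y \psi_{x_2}$ with unique local maximum on $I$ at $y$. In particular, $\psi_{x_1}$ and $\psi_{x_2}$ are linearly independent, so $\dim V \geq 2$.

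The next step is to use the bridge to severely constrain $V$. I would introduce the trace map $T\colon V \to \R^2$ defined by $T(\psi) = (\psi(v_2), \psi'_e(L(e)))$, where $e$ is parametrised from $v_1$ to $v_2$. Any $\psi \in \ker T$ has both the value and the derivative of $\psi_e$ vanishing at $v_2$, so $\psi_e \equiv 0$; the Kirchhoff condition at $v_2$ in $\Gamma$ then reduces to the standard condition in $\Gamma_2$, forcing $\psi|_{\Gamma_j}$ to be a $\mu_2(\Gamma)$-eigenfunction of $-\Delta_{\Gamma_j}$ that vanishes at $v_j$ for $j = 1, 2$. For generic edge lengths no such ``accidentally vanishing'' eigenfunction exists, so $T$ is injective and $\dim V \leq 2$. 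The non-generic case would be handled by enlarging $T$ with additional vertex data on $\Gamma_1$ or by a small perturbation argument combined with the continuity of eigenfunctions under edge-length perturbations developed in the appendix. In either case one obtains $\dim V = 2$, and as $y$ varies over $I$, $T(\psi_y) = T(\psi_{x_1}) + \alpha_y T(\psi_{x_2})$ sweeps out an affine line in $\R^2$.

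The hard part will be converting this one-parameter constraint on $T(\psi_y)$ into the desired contradiction. Heuristically, the pair $(\psi_y(v_2), \psi'_e(L(e)))$ uniquely determines $\psi_y|_{\Gamma_2 \cup e}$ and, via the matching at $v_1$, the shape of $\psi_y$ on $\Gamma_1$ as well; only for a discrete set of points on the affine line can the resulting function have its local maximum in $I$ at a prescribed point $y$. Making this rigorous requires tracking the real-analytic dependence of the position of the maximum on $\alpha_y$ and verifying that this dependence is locally injective. An alternative route is a doubling construction: reflect $\Gamma_2 \cup e$ across $v_1$ and glue to $\Gamma_1$ to obtain a bridge-free graph $\widetilde\Gamma$ on which the hypothesised interval $I$ still lies in $M_\loc$, so that the remaining uncountability would have to arise from the highly symmetric structures identified in Proposition~\ref{prop:m-gamma-examples} -- structures which are incompatible with the original graph containing a bridge. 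Either route appears to require input beyond the techniques developed in this paper, which is why the statement remains a conjecture here.
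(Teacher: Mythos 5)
This statement is one of the paper's open conjectures (Section~\ref{sec:conj-number}); the paper offers no proof of it, only the informal remark that an infinite $M$ seems to require distinct eigenfunctions supported throughout the graph and that ``the particular case of graphs with bridges should be easier to handle''. So there is nothing to compare your argument against, and you yourself correctly flag at the end that it is incomplete. Your opening reduction is sound and matches the spirit of the paper's tools: by Proposition~\ref{prop:m-finite-or-uncountable} (whose proof rests on Lemma~\ref{lem:Mgeneration}), an infinite $M_\loc$ must contain a nondegenerate subinterval $I$ of some edge, and by Theorem~\ref{thm:doublyConnectedPart} that edge lies in $\interior\cD_\Gamma$, hence is not the bridge. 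That is a legitimate starting point.

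The genuine gap is the claim that ``in either case one obtains $\dim V = 2$'' for $V = \ker(-\Delta_\Gamma - \mu_2(\Gamma))$. This is false. An equilateral star with $E \ge 4$ edges has a bridge (every edge is one) and a $\mu_2$-eigenspace of dimension $E-1 \ge 3$; the basis eigenfunctions supported on pairs of edges avoiding a fixed bridge $e$ vanish identically on $e$ and hence lie in $\ker T$, so your trace map is far from injective. The paper itself warns of exactly this immediately after stating the conjecture (``the presence of a bridge (obviously) does not imply the simplicity of $\mu_2(\Gamma)$, as elementary examples such as stars or star-like graphs show''). Appealing to genericity cannot repair the argument: for generic edge lengths $\mu_2(\Gamma)$ is already simple and the conjecture is trivial, so the entire content of the statement lies in the non-generic case, which is precisely where your dimension bound fails; and a ``small perturbation'' changes the graph, whereas the conjecture concerns a fixed one. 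Two further steps are also unsupported: the Cauchy data $(\psi(v_2), \psi_e'(L(e)))$ does not determine $\psi$ on $\Gamma_2$ (only the Kirchhoff \emph{sum} of derivatives at $v_2$ is constrained, and $\Gamma_2$ may carry eigenfunctions vanishing at $v_2$ together with that sum), and the doubling alternative presupposes that equilateral pumpkins and complete graphs are the only sources of an uncountable $M_\loc$ --- which the paper poses as a separate open question. In short: a reasonable first reduction followed by a programme whose key dimensional step is refuted by the simplest bridged graphs; the statement remains open.
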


Note that the presence of a bridge (obviously) does not imply the simplicity of $\mu_2(\Gamma)$, as elementary examples such as stars or star-like graphs show. A related question is a type of inverse problem: if the set $M$ is small, what can we conclude about the eigenfunctions?

\begin{question}
Suppose that $|M|=2$. Does it follow that $\mu_2(\Gamma)$ is simple?
\end{question}

However, it is natural to ask whether introducing delta or other vertex conditions into the mix might change the picture.

\begin{question}
Is it possible to arrange for an infinite set $M$ of all global minima and maxima of second eigenfunctions of $\Gamma$ for which $M \subsetneq \Gamma$, or even a countably infinite $M$, if other vertex conditions than standard are allowed?
\end{question}

Returning to standard vertex conditions, we may in fact ask:

\begin{question}
Are there any graphs $\Gamma$ for which $M=\Gamma$ apart from equilateral pumpkins and complete graphs?
\end{question}

We saw in Theorem~\ref{thm:m-generically-2} above that up to a small modification of the edge lengths and possibly attaching short pendant edges one has $|M| = 2$. Actually, it may be expected that this is a generic property, that is, it can be reached by only perturbing the edge lengths by an arbitrarily small perturbation.

\begin{conjecture}\label{conj:Mgeneric}
Generically, the eigenvalue $\mu_2 (\Gamma)$ is simple and the corresponding eigenfunction has exactly one minimum and one maximum, i.e., $|M| = 2$.
\end{conjecture}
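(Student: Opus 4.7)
The plan is to show that the set
\[
\cG := \bigl\{y \in \R_+^E : \mu_2(\Gamma_y)\text{ is simple and }|M(\Gamma_y)|=2\bigr\}
\]
is residual in $\R_+^E$. By \cite{F05} and \cite{BeLi17}, the subset $\cG_0\subset\R_+^E$ on which $\mu_2(\Gamma_y)$ is simple and the normalized eigenfunction $\psi_y$ does not vanish at any vertex is already open and dense; on $\cG_0$, standard perturbation theory \cite[Ch.~VII]{K95} applied to a suitable pulled-back family of operators on a fixed reference graph shows that $\psi_y$ (with a coherent sign convention and $L^2$-normalization) depends real-analytically on $y$. It therefore suffices to prove that $\cG$ is residual in $\cG_0$.

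On $\cG_0$, the restriction $\psi_y|_e$ is a sinusoid of wavenumber $\sqrt{\mu_2(\Gamma_y)}$ whose coefficients depend real-analytically on $y$; hence its critical points on each edge depend real-analytically on $y$ wherever they exist, and by Theorem~\ref{thm:doublyConnectedPart} the global extrema lie in the finite set consisting of $\partial\Gamma_y$ together with the interior critical points inside $\interior\cD_{\Gamma_y}$. Stratifying $\cG_0$ by the \emph{combinatorial type} $P$ of the global extrema --- i.e.\ the specification of which edges or vertices host the candidate maxima and minima --- yields finitely many strata, and within each stratum for which $|M|>2$, the equality of two distinct candidate extremal values is encoded by a real-analytic equation $F_P(y)=0$. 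Since a real-analytic function on a connected open set is either nowhere-dense as a zero locus or identically zero, the residuality of $\cG$ will follow once one shows that $F_P \not\equiv 0$ for every pattern $P$ compatible with $|M|>2$.

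The main obstacle is exactly this non-triviality, and my plan is to establish it via a Hadamard-type first-variation formula. Differentiating the eigenvalue problem with respect to a single edge length $L(e)$ (after pulling back to a fixed reference domain) produces an expression for $\partial_{L(e)}\psi_y(x)$ in terms of $\psi_y|_e$, $\psi_y'|_e$, and the reduced resolvent of $-\Delta_{\Gamma_y}-\mu_2(\Gamma_y)$ on $\psi_y^\perp$. Given hypothetical duplicate global maxima at points $x_1(y)$ and $x_2(y)$, one chooses $e$ so that the local structure of $\psi_y$ near $x_1$ relative to $x_2$ is \emph{not} symmetric; a direct computation then yields $\partial_{L(e)}F_P(y)\neq 0$, so $F_P$ is not identically zero on the stratum. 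The truly delicate subcase is when a topological or accidental metric symmetry of $(\Gamma_y,\psi_y)$ exchanges $x_1$ and $x_2$; topological symmetries of $\cG$ are broken by varying a single edge length (the symmetric configurations lie in a positive-codimension subvariety), while accidental metric symmetries are themselves non-generic, so a preliminary symmetry-breaking perturbation inside $\cG_0$ reduces the delicate case to the generic one.

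A complementary route, serving as a backup for exhibiting a concrete non-degenerate example, is to use Theorem~\ref{thm:m-generically-2}: given $y_0$, that theorem produces a nearby graph (possibly with arbitrarily short pendants attached) at which $|M|=2$. One can then study the one-parameter family obtained by shrinking the attached pendant lengths to zero along a generic contraction path, and argue via continuity of the low eigenpairs (Theorem~\ref{thm:appendix} and the analytic dependence inside $\cG_0$) that for an open dense set of contraction rates the property $|M|=2$ survives removal of the pendants, producing a point in $\cG_0$ at which $F_P\neq 0$ directly. Combining either path, the conjecture follows by intersecting $\cG_0$ with the complements of the countably many nowhere-dense analytic subvarieties $\{F_P=0\}$.
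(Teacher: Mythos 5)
This statement is posed in the paper as an \emph{open conjecture} (Section~\ref{sec:conj-number}); the paper contains no proof of it, so there is nothing to compare your argument against. The closest the authors come is Theorem~\ref{thm:m-generically-2}, which obtains $|M|=2$ only after allowing the attachment of arbitrarily short pendant edges, i.e.\ after changing the underlying discrete graph; the conjecture is precisely the assertion that this topological modification can be dispensed with. Your write-up should therefore be judged as a proposed resolution of an open problem, and as such it is a strategy outline rather than a proof: the entire difficulty is concentrated in the claim $F_P\not\equiv 0$, which you identify correctly as ``the main obstacle'' but do not establish. The Hadamard-type first-variation argument is only described, not carried out, and the ``truly delicate subcase'' is handled circularly: breaking a topological or metric symmetry of $(\Gamma_y,\psi_y)$ by perturbing one edge length does not by itself show that the two tied extremal values separate --- that separation is exactly the non-vanishing of $\partial_{L(e)}F_P$ that you are trying to prove, so the symmetry-breaking step cannot be invoked as a reduction. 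There are also unaddressed technical points in the stratification: the ``interior critical point on edge $e$'' need not exist on all of a connected component of $\cG_0$, so $F_P$ is only defined on a subset that need not be open or connected, and the dichotomy ``analytic function is identically zero or has nowhere-dense zero set'' requires a connected domain.

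The backup route via Theorem~\ref{thm:m-generically-2} does not close the gap either. That theorem produces a graph with extra pendant edges, hence a point in a \emph{different} parameter space $\R_+^{E'}$ with $E'>E$; the conjecture requires residuality in $\R_+^{E}$ for the original edge set. Shrinking the pendants to zero along a contraction path converges (by Theorem~\ref{thm:appendix}) back to the original graph $\Gamma_{y_0}$, whose eigenfunction may well satisfy $|M|>2$ --- indeed that is the situation one starts from --- and nothing in the continuity statement guarantees that $|M|=2$ survives into the limit or transfers to nearby points of $\R_+^E$ without pendants. So neither route, as written, produces even a single point of a given stratum at which $F_P\neq 0$, which is what the whole argument hinges on. The overall architecture (restrict to the open dense set of \cite{F05,BeLi17}, exploit analyticity in $y$, and exclude countably many analytic subvarieties) is a sensible and standard framework, and is very likely how a proof would eventually be organised; but the conjecture remains open precisely because the non-degeneracy input has not been supplied, and your proposal does not supply it.
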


However, producing examples $\Gamma$ where $|M| \geq 3$ should also be possible even in the absence of symmetry or commensurability properties of $\Gamma$.

\begin{conjecture}
There exists a graph $\Gamma$ all of whose edge lengths are pairwise rationally independent but for which $|M|=3$ (or $|M|=n$ for any given $n \geq 2$).
\end{conjecture}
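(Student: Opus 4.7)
I would prove this conjecture by constructing, for each $n \geq 3$, a tree $\Gamma$ with enough edges so that $|M| = n$ is enforced by a finite collection of \emph{cross-vertex} equalities between pendant values of the $\mu_2$-eigenfunction, and then combining this with a genericity argument to ensure pairwise rational independence of the lengths. The starting point is Corollary~\ref{cor:treelocalhotspots}, which on a tree places $M$ entirely on the set of degree-one vertices. Moreover, for any pendant $p$ attached via an edge of length $\ell$ to an interior vertex $v$, the monotonicity forced by $M_{\loc} \subset \partial \Gamma$ together with the Neumann condition at $p$ yields the closed formula $\psi(p) = \psi(v)/\cos(k\ell)$, where $k := \sqrt{\mu_2(\Gamma)}$. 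Hence $|M| = n$ can be arranged by imposing $n-2$ equalities of the form $\psi(p_i) = \psi(p_j)$, each being a single real-analytic condition on the edge-length vector.

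For concreteness consider $n = 3$ and the tree $\Gamma$ with four edges obtained from a central vertex $v_0$ carrying two pendant edges $e_1, e_2$ (of lengths $L_1, L_2$) together with a third edge $e_3$ of length $L_3$ leading to an intermediate vertex $v_1$, which itself carries a pendant edge $e_4$ of length $L_4$. A naive scheme such as $\psi(p_1) = \psi(p_2)$ is fatal, because both pendants sit at the same hub and the formula above then forces $\cos(kL_1) = \cos(kL_2)$, hence $L_1 = L_2$ within the monotonic range $kL_i < \pi$, destroying rational independence. The crucial idea is to instead impose the \emph{cross-vertex} equality $\psi(p_1) = \psi(p_3)$, which reads
\[
\psi(v_0) \cos(k L_4) = \psi(v_1) \cos(k L_1),
\]
a transcendental relation in the $L_i$ that is not of the form $pL_i = qL_j$. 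The first concrete task is to produce a single base configuration realising this equality, together with the strict inequality $|\psi(p_2)| < |\psi(p_1)| = |\psi(p_3)|$ needed for $|M| = 3$ exactly: this should follow by starting from an asymmetric configuration where $\psi(p_1) > \psi(p_3) > 0$ and deforming $L_4$ continuously, applying the intermediate value theorem together with the continuous dependence of eigenfunctions on edge lengths established in the appendix. The analytic implicit function theorem then identifies the solution locus as a codimension-one real-analytic submanifold of $\R_+^4$ near the base point, on which the remaining conditions (strict inequalities, simplicity of $\mu_2$, $k^2$ being the \emph{second} eigenvalue rather than a higher one) are open and hence persist.

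To conclude, pairwise rational dependence fails only on the countable union of hyperplanes $H_{i,j,p,q} := \{pL_i = qL_j\}$ for $i \neq j$ and $p, q \in \mathbb{Z} \setminus \{0\}$. Provided the solution submanifold is not \emph{contained} in any such $H_{i,j,p,q}$---which should follow from a tangent-space computation, since the gradient of the cross-vertex relation involves derivatives of $k$, $\psi(v_0)$ and $\psi(v_1)$ with respect to \emph{all} four lengths while the gradient of $pL_i - qL_j$ has only two nonzero components---each intersection has codimension at least one in the three-dimensional submanifold, so their countable union is meager. A residual, in particular nonempty, subset of the submanifold then consists of length vectors satisfying both $|M| = 3$ and pairwise rational independence. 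For arbitrary $n \geq 3$ one replaces this tree by a tree with sufficiently many pendants distributed among enough distinct interior vertices and imposes $n-2$ cross-vertex equalities of the same flavour, keeping the solution set positive-dimensional. The main obstacle is the existence step: producing the base configuration in which the cross-vertex equation holds \emph{together} with the strict inequalities and with $k^2$ being precisely $\mu_2(\Gamma)$, since the secular equation couples all lengths implicitly through $k$, $\psi(v_0)$ and $\psi(v_1)$. The secondary but subtler issue is the transversality claim above, which I expect to hold but which requires explicit verification to rule out the accidental coincidence that the cross-vertex relation lies along a rational hyperplane.
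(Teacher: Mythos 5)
First, a point of reference: this statement is one of the paper's open conjectures. The paper offers no proof, only the candidate graph of Figure~\ref{fig:M3} together with a one-sentence heuristic, so your plan can only be measured against that sketch. Your overall strategy --- confine the extrema to the leaves via Corollary~\ref{cor:treelocalhotspots}, impose cross-vertex equalities between pendant values of the eigenfunction, produce a base configuration by continuity, and then use an implicit-function-theorem plus Baire argument to avoid the countably many hyperplanes $\{pL_i=qL_j\}$ --- is exactly the route the paper has in mind, and the two acknowledged obstacles you list (the existence step and the transversality step) are precisely the parts that remain unproven there as well.

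However, your concrete graph for $n=3$ has a genuine defect. The ``intermediate vertex'' $v_1$ carries only the single pendant $e_4$ besides $e_3$, so it has degree two, and the standard Laplacian does not see degree-two vertices: metrically your graph is a $3$-star with arms of lengths $L_1$, $L_2$ and $L_3+L_4$. On the composite arm the Neumann condition at $p_3$ forces $\psi(v_0)=\psi(p_3)\cos\bigl(k(L_3+L_4)\bigr)$, so your cross-vertex equality $\psi(p_1)=\psi(p_3)$ collapses to $\cos(kL_1)=\cos\bigl(k(L_3+L_4)\bigr)$, i.e.\ to the \emph{linear} integer relation $L_1=L_3+L_4$ in the monotone regime --- not to a transcendental hypersurface --- and the tangent-space computation you propose would return the constant gradient $(1,0,-1,-1)$. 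Moreover, by the analysis in the proof of Proposition~\ref{prop:starDiameter}, on a star the maximal value on an arm is a strictly increasing function of its length, so $|M|=3$ on a genuine $3$-star forces two arms to be equal. Your example therefore satisfies the letter of the conjecture only by hiding the equality of two arms behind an invisible degree-two vertex; by the same trick one could subdivide the arms of an equilateral star at irrational points and ``prove'' the conjecture trivially, which is plainly not its intent (note that the paper's candidate in Figure~\ref{fig:M3} has no degree-two vertices). The fix is to give the intermediate vertex degree at least three --- for instance two pendants hanging off it, plus a pendant and a long edge at $v_0$, exactly as in Figure~\ref{fig:M3} --- so that the Kirchhoff condition there genuinely couples the branches and the cross-vertex relation becomes a nondegenerate transcendental condition on the edge lengths. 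With that correction your scheme coincides with the paper's heuristic, but the existence of the base configuration with $k^2=\mu_2(\Gamma)$ and the non-containment of the solution locus in any rational hyperplane would still need to be established; as it stands, neither you nor the paper has closed those gaps.
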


We expect the graph depicted in Figure~\ref{fig:M3} to provide such an example.
\begin{figure}[H]
\begin{tikzpicture}
\coordinate (a) at (-1,0);
\coordinate (b) at (2,0);
\coordinate (c) at (3.5,-1);
\coordinate (d) at (2.5,0.5);
\coordinate (e) at (2.5,1.3);
\coordinate (f) at (3.75,0.5);
\draw[thick] (a) -- (b);
\draw[thick] (b) -- (c);
\draw[thick] (b) -- (d);
\draw[thick] (d) -- (e);
\draw[thick] (d) -- (f);
\draw[lightgray,fill] (a) circle (1.5pt);
\draw[fill] (b) circle (1.5pt);
\draw[lightgray,fill] (c) circle (1.5pt);
\draw[fill] (d) circle (1.5pt);
\draw[fill] (e) circle (1.5pt);
\draw[lightgray,fill] (f) circle (1.5pt);
\node at (a) [anchor=north] {$v_-$};
\node at (b) [anchor=north] {$v_0$};
\node at (c) [anchor=west] {$v_3$};
\node at (e) [anchor=west] {$v_1$};
\node at (f) [anchor=north] {$v_2$};
\node at (0.5,-0.05) [anchor=south] {$e_-$};
\node at (2.35,0.15) [anchor=south east] {$e_0$};
\node at (2.55,0.8) [anchor=east] {$e_1$};
\node at (3.125,0.5) [anchor=south] {$e_2$};
\node at (3,-0.5) [anchor=north east] {$e_3$};
\end{tikzpicture}
\caption{A graph for which we expect that $M=\{v_-,v_2,v_3\}$ is possible even if the edge lengths are incommensurable.}
\label{fig:M3}
\end{figure}
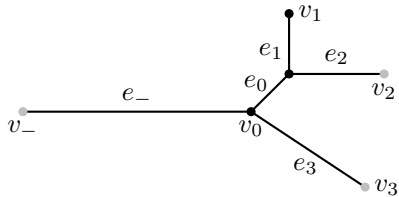
The idea is that $e_-$ should be very long, so that the eigenfunction reaches its unique minimum (say) at $v_-$, and is monotonically increasing on the other edges towards $v_1,v_2,v_3$. For any given edge length $L(e_3)$, it is possible to choose $L(e_0),L(e_1),L(e_2)$ in such a way that the global maximum is at both $v_2$ and $v_3$ (this requires $L(e_2)>L(e_1)$ and $L(e_0)+L(e_2) > L(e_3)$). Since for any value of $L(e_3)$ there exist multiple possible choices of $L(e_0),L(e_1),L(e_2)$ which work, a continuity argument should be able to establish that there is a rationally independent choice which works.

\subsection{Conjectures about the location of the hot spots}
\label{sec:conj-location}

We now formulate a couple of conjectures regarding the location of the hot spots which arise from our considerations above, in particular the question of the distance between them and the diameter of the graph. We recall from Proposition~\ref{prop:hotspotsClose} and Example~\ref{ex:krpamm} that in general there is no relation between diameter and the distance between the hottest and coldest points of the graph. While this example was a tree, it is of course possible to find graphs without any boundary for which the distance between hottest and coldest points is still arbitrarily small compared with the diameter (cf.\ Example~\ref{ex:undUmgekehrt}). But such examples still have an essentially tree-like structure.

\begin{conjecture}
\label{conj:diam-no-boundary}
For every $\varepsilon > 0$ there exists a doubly connected graph $\Gamma$ (cf.\ Section~\ref{sec:location-summary}) with $\diam \Gamma = 1$ such that $\mu_2 (\Gamma)$ is simple and $\max \{\dist (x,y): x,y \in M\}  < \varepsilon$.
\end{conjecture}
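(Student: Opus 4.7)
The approach is to adapt the tree construction of Example~\ref{ex:krpamm} to a doubly-connected setting by \emph{doubling every edge}. First I would invoke that example with its three parameters chosen so that the resulting tree $\tilde\Gamma$ has $\diam \tilde\Gamma = 1$, a simple $\mu_2(\tilde\Gamma) < \pi^2$ whose eigenfunction $\psi$ vanishes on the long edges $e_1 \cup e_3$, every edge length at most $1/2$, and $M(\tilde\Gamma)$ of pairwise diameter less than some $\eta > 0$ to be fixed below. Let $\Gamma$ be obtained from $\tilde\Gamma$ by replacing each edge $e$ by a pair of parallel edges $e^{(1)}, e^{(2)}$ of common length $L(e)$. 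Each such pair forms a cycle of length $2 L(e)$, so every point of $\Gamma$ lies on a cycle and $\Gamma$ is doubly connected.

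The heart of the argument is to show that $\mu_2(\Gamma) = \mu_2(\tilde\Gamma)$ is still simple. To this end one exploits the $(\mathbb{Z}/2)^E$ symmetry of edge-swaps $\{T_e\}_{e \in \cE}$, each of which commutes with $-\Delta_\Gamma$. Decomposing $L^2(\Gamma)$ into joint eigenspaces for the characters $\chi \in \{\pm 1\}^E$, the crucial observation is that each swap $T_e$ acts trivially off the edge $e$, so the condition $T_e f = -f$ forces $f \equiv 0$ on every edge $e' \neq e$. Consequently only two types of isotypic subspace are nontrivial: the fully symmetric one (for $\chi = (+,\ldots,+)$), canonically isomorphic to $L^2(\tilde\Gamma)$, and, for each $e_0 \in \cE$, the ``single-edge antisymmetric'' subspace (for $\chi$ with a single $-1$ at $e_0$), isomorphic to the Dirichlet problem on an interval of length $L(e_0)$. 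The spectrum of $-\Delta_\Gamma$ is therefore the union of that of $-\Delta_{\tilde\Gamma}$ with $\bigcup_{e \in \cE} \{(k\pi/L(e))^2 : k \in \mathbb{N}\}$; since $\mu_2(\tilde\Gamma) < \pi^2 \leq \min_{e \in \cE} (\pi/L(e))^2$, it follows that $\mu_2(\Gamma) = \mu_2(\tilde\Gamma)$ remains simple and its eigenfunction $\Psi$ is the symmetric extension of $\psi$.

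It then remains to check that $M(\Gamma) = M(\tilde\Gamma)$: the tree hot spots sit at pendant vertices (not duplicated by doubling), and $\Psi$ takes the same values there as $\psi$, so the pairwise distances of hot spots in $\Gamma$ are bounded by those in $\tilde\Gamma$, hence by $\eta$. A simple estimate shows $\diam \Gamma \leq \diam \tilde\Gamma + L_{\max} \leq 3/2$, so rescaling by the factor $1/\diam \Gamma \in [2/3, 1]$ yields a doubly-connected graph of diameter exactly $1$, with simple $\mu_2$ and hot-spot pairwise distance at most $\eta$; choosing $\eta < \varepsilon$ above then closes the argument. The main obstacle is the spectral decomposition: a naive Neumann--Dirichlet analysis on the subtree $\tilde\Gamma \setminus e_1$ would seem to contribute an extra eigenvalue at $\mu_2(\tilde\Gamma)$ on $\Gamma$ (because $\psi$, vanishing on $e_1$, restricts to an honest eigenfunction with Dirichlet condition at $v_0$ on $\tilde\Gamma \setminus e_1$), but the clean $(\mathbb{Z}/2)^E$-isotypic description avoids this trap precisely because a single edge-swap leaves every other edge pointwise fixed, forcing any antisymmetric mode to vanish off its own edge.
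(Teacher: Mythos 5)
This statement is Conjecture~\ref{conj:diam-no-boundary}: the paper offers \emph{no} proof of it, only the heuristic candidate of a ``pumpkin necklace'' (two thick equilateral pumpkins joined by long paths), so there is nothing to compare your argument against line by line. Your construction is genuinely different from the paper's suggested one, and as far as I can check it is correct and complete --- i.e.\ it would settle the conjecture. The key step, the $(\mathbb{Z}/2)^E$ edge-swap decomposition of the doubled tree, checks out: for a character with $-1$ at $e_0$, continuity at the two endpoints of the pair together with antisymmetry forces Dirichlet conditions there while Kirchhoff becomes automatic, so the antisymmetric sector for $e_0$ is unitarily equivalent to the Dirichlet Laplacian on $(0,L(e_0))$ with spectrum bounded below by $(\pi/L(e_0))^2 \geq 4\pi^2$; characters with two or more $-1$'s give the zero subspace; and the symmetric sector reproduces $-\Delta_{\tilde\Gamma}$ exactly (the Kirchhoff sums simply double). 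Hence $\mu_2(\Gamma)=\mu_2(\tilde\Gamma)<\pi^2$ is simple with eigenfunction the symmetric extension of $\psi$, and since $M(\tilde\Gamma)\subset\partial\tilde\Gamma$ by Corollary~\ref{cor:treelocalhotspots} the extrema sit at vertices, which are not duplicated, so $M(\Gamma)$ is in bijection with $M(\tilde\Gamma)$ and pairwise distances can only decrease. Compared with the paper's candidate, your route has the advantage of reducing everything to the tree of Example~\ref{ex:krpamm} plus an elementary symmetry argument, rather than requiring a delicate analysis of the eigenfunction of a necklace of pumpkins.

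Two minor remarks. First, the rescaling step is unnecessary: distances between points of the original tree are preserved under doubling and every new point lies on a lifted path, so in fact $\diam\Gamma=\diam\tilde\Gamma=1$ exactly; your weaker bound and rescaling are harmless but can be dropped. Second, your argument inherits whatever informality is present in the final paragraph of Example~\ref{ex:krpamm} (the simplicity of $\mu_2(\widetilde\Gamma)$ and the support of its eigenfunction are asserted there with a reference to \cite{KRpamm} rather than proved in full); a self-contained write-up should either fill that in or cite it explicitly as the input to your construction.
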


A candidate graph, a kind of ``pumpkin necklace'', is depicted in Figure~\ref{fig:double-pumpkin-necklace}.
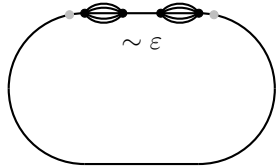
\begin{figure}[H]
\begin{tikzpicture}%[scale=1.2]
\coordinate (a) at (-0.75,0);
\coordinate (b) at (-0.25,0);
\coordinate (c) at (0.25,0);
\coordinate (d) at (0.75,0);
\coordinate (e) at (-1.75,-1);
\coordinate (f) at (1.75,-1);
\coordinate (g) at (-0.75,-2);
\coordinate (h) at (0.75,-2);
\draw[thick] (a) edge (d);
\draw[thick,bend left=60]  (a) edge (b);
\draw[thick,bend left=30] (a) edge (b);
\draw[thick,bend right=30] (a) edge (b);
\draw[thick,bend right=60]  (a) edge (b);
\draw[thick,bend left=60]  (c) edge (d);
\draw[thick,bend left=30] (c) edge (d);
\draw[thick,bend right=30] (c) edge (d);
\draw[thick,bend right=60]  (c) edge (d);
\draw[thick,bend right=45] (a) edge (e);
\draw[thick,bend right=45] (e) edge (g);
\draw[thick] (g) edge (h);
\draw[thick,bend right=45] (h) edge (f);
\draw[thick,bend right=45] (f) edge (d);
\draw[fill] (-0.75,0) circle (1.5pt);
\draw[fill] (-0.25,0) circle (1.5pt);
\draw[fill] (0.25,0) circle (1.5pt);
\draw[fill] (0.75,0) circle (1.5pt);
\node at (0,-0.2) [anchor=north] {$\sim \varepsilon$};
\draw[lightgray,fill] (-0.95,-0.02) circle (1.5pt);
\draw[lightgray,fill] (0.95,-0.02) circle (1.5pt);
\end{tikzpicture}
\label{fig:double-pumpkin-necklace}
\caption{A candidate graph for Conjecture~\ref{conj:diam-no-boundary}. Candidate locations for the hot spots are marked in grey (the precise location will depend on the respective edge lengths and the ``thickness'' of the pumpkins, and could be within the pumpkins).}
\end{figure}
The idea is that the two small but thick equilateral pumpkins are of order $\varepsilon$ apart from each other. If they are thick enough (i.e., have enough parallel edges), then the eigenfunction should take on its maximum on or near one and its minimum on or near the other. On the subject of pumpkins, we briefly mention the following question arising from Example~\ref{ex:pumpkin}: we noted there that $M_{\loc}$ either consists of the midpoints of each edge, or the two points on the longest edge at the correct distance from each other. In fact, for $M$, it seems clear that the global extrema should be at the midpoints of the longest two edges (corresponding to the diameter of the graph). This should be easy but tedious to prove; since we do not need it, we leave it as a conjecture.

\begin{conjecture}
If $\Gamma$ is a (non-equilateral) pumpkin for which $\mu_2 (\Gamma)$ is simple, then $|M|=2$ and $\max \{\dist (x,y): x,y \in M\} = \diam \Gamma$.
\end{conjecture}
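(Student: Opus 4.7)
The plan is to pin down the eigenfunction $\psi$ associated with the simple eigenvalue $\mu_2(\Gamma)$ explicitly by exploiting the $\mathbb{Z}_2$-symmetry of the pumpkin that swaps the two vertices $v_1, v_2$ and reverses each edge, in the spirit of Example~\ref{ex:pumpkin}. Every eigenfunction decomposes into a reflection-symmetric (RS) part, with $\psi_i(L_i - x) = \psi_i(x)$ on each edge $e_i$ parametrised from $v_1$ to $v_2$, and a rotation-symmetric (ROT) part, with $\psi_i(L_i - x) = -\psi_i(x)$. Simplicity of $\mu_2(\Gamma)$ forces $\psi$ to be purely one of these two types. Restricting to one half of each edge identifies the two cases with the Laplacian spectrum on the auxiliary star $S$ with edge lengths $L_i/2$ and Kirchhoff at the central vertex, under Neumann (for RS) or Dirichlet (for ROT) conditions at the leaves. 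Standard monotonicity of eigenvalues in the boundary conditions, together with a unique-continuation argument ruling out equality (a nontrivial Neumann eigenfunction that vanishes at a leaf would be identically zero), yields $\mu_2^{\mathrm{RS}}(S) < \mu_2^{\mathrm{ROT}}(S)$, so that $\psi$ is of RS type.

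In the RS regime, $\psi_i(x) = A_i \cos(k(x - L_i/2))$ with $k = \sqrt{\mu_2(\Gamma)}$; continuity at the vertices imposes $A_i \cos(kL_i/2) = c$ for a common constant $c$, and Kirchhoff gives $c \sum_i \tan(kL_i/2) = 0$. The alternative $c = 0$ would force $L_i = \pi/k$ on every edge in the support of $\psi$, and can be treated as a separate branch; for now assume $c \neq 0$, so $\sum_i \tan(kL_i/2) = 0$. A sign analysis of $k \mapsto \sum_i \tan(kL_i/2)$ shows that this function is strictly positive on $(0, \pi/L_1)$, jumps from $+\infty$ to $-\infty$ at $k = \pi/L_1$, and returns to $+\infty$ as $k \to \pi/L_2$; hence its smallest positive root lies in $(\pi/L_1, \pi/L_2)$, whence $kL_1/2 \in (\pi/2, \pi)$ while $kL_i/2 \in (0, \pi/2)$ for $i \geq 2$. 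After normalising $c > 0$ we obtain $A_1 < 0 < A_i$ for $i \geq 2$; moreover, since $L \mapsto c/\cos(kL/2)$ is strictly increasing on $[0, \pi/k)$, one has $A_2 > A_3 \geq \dots \geq A_E > 0$. On each edge the unique extremum is the midpoint, with value $A_i$; the global maximum of $\psi$ is therefore attained uniquely at the midpoint of $e_2$, the global minimum uniquely at the midpoint of $e_1$, giving $|M| = 2$. A short calculation yields $\dist(x, y) \leq L_i/2 + L_j/2$ for any $x \in e_i$, $y \in e_j$ with equality at midpoints, so $\diam \Gamma = (L_1 + L_2)/2$, which matches exactly the distance between the two hot spots.

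The main obstacles are the strict comparison $\mu_2^{\mathrm{RS}} < \mu_2^{\mathrm{ROT}}$ (for which the soft min--max approach only gives a non-strict inequality and one must argue via the secular equations directly, or via the unique-continuation step above), and the handling of degenerate edge-length orderings. The argument above tacitly uses the strict chain $L_1 > L_2 > L_3$. If $L_1 = L_2 > L_3$, then $\mu_2(\Gamma)$ is simple but sits in the $c = 0$ branch at $\mu_2(\Gamma) = \pi^2/L_1^2$, with eigenfunction supported on $e_1 \cup e_2$; one checks directly that $|M| = 2$ at the two midpoints and the distance equals $L_1 = \diam \Gamma$, so the conclusion still holds. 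However, if $L_1 > L_2 = L_3 > L_4$, the $e_2 \leftrightarrow e_3$ symmetry forces $A_2 = A_3$ and hence $|M| = 3$: in this case the conjecture as stated fails, so the precise formulation presumably requires uniqueness of both the longest and the second-longest edge. The distance-equals-diameter conclusion, by contrast, appears to survive in every relevant subcase.
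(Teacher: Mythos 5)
First, a point of orientation: the paper does not prove this statement at all --- it is explicitly left as a conjecture in Section~\ref{sec:conjectures} (``This should be easy but tedious to prove; since we do not need it, we leave it as a conjecture''), so there is no proof of record to compare yours against. Your framework --- decomposing by the $\mathbb{Z}_2$-symmetry into reflection-symmetric (RS) and rotation-symmetric (ROT) types, identifying these with the Neumann and Dirichlet problems on the half-star with edge lengths $\ell_i = L_i/2$, and reading off the extrema from the explicit form $A_i\cos(k(x-L_i/2))$ --- is the natural one and matches the discussion in Example~\ref{ex:pumpkin}.

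The genuine gap is the step ``$\mu_2^{\mathrm{RS}}(S)<\mu_2^{\mathrm{ROT}}(S)$'', which you flag as the main obstacle but do not close. What is actually needed is that the \emph{second} eigenvalue of the Neumann star lies below the \emph{first} eigenvalue of the Dirichlet-at-leaves star; monotonicity of eigenvalues in the boundary conditions only compares eigenvalues of the same index ($\mu_k^{N}\le\lambda_k^{D}$), so the ``soft'' approach does not even give a non-strict version of the inequality you need, and a unique-continuation argument for the equality case is aimed at the wrong comparison. The paper itself, in Example~\ref{ex:pumpkin}, explicitly leaves open which symmetry type realises $\mu_2$. The inequality does appear to be true for $E\ge 3$, but it requires real work with the secular functions $F_N(k)=\sum_i\tan(k\ell_i)$ and $F_D(k)=\sum_i\cot(k\ell_i)$: writing $\theta_i=k_D\ell_i\in(0,\pi)$ at the first zero $k_D$ of $F_D$, one has $\sum_i\cot\theta_i=0$; if at least two of the $\theta_i$ exceed $\pi/2$ then $F_N$ has two poles, hence a zero, in $(0,k_D)$; if only $\theta_1>\pi/2$, then Cauchy--Schwarz gives $\bigl(\sum_{i\ge2}\cot\theta_i\bigr)\bigl(\sum_{i\ge2}\tan\theta_i\bigr)\ge(E-1)^2$, whence $F_N(k_D)=\tan\theta_1+\sum_{i\ge2}\tan\theta_i\ge\bigl((E-1)^2-1\bigr)\,|\tan\theta_1|>0$ and again $F_N$ vanishes before $k_D$. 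The case $E=2$ (a cycle) gives exact equality, so the strict inequality genuinely uses $E\ge 3$ and cannot follow from any index-preserving comparison. Separately, your localisation of the first root of $\sum_i\tan(kL_i/2)$ in $(\pi/L_1,\pi/L_2)$ silently assumes $L_1\le 3L_2$; if $L_1>3L_2$ a further pole of $\tan(kL_1/2)$ intervenes (the conclusion survives, since the root still satisfies $kL_1/2<\pi$, but the case needs a sentence).

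Finally, your observation that the statement as written fails when the second-longest edge is not unique appears to be correct and deserves more emphasis than you give it: for $L_1>L_2=L_3$ the swap symmetry of $e_2,e_3$ is compatible with $\mu_2(\Gamma)$ being simple (the swap-antisymmetric sector starts at $\pi^2/L_2^2$, strictly above the RS eigenvalue), and the unique eigenfunction then attains its maximum at two distinct midpoints, so $|M|=3$. The conjecture therefore needs a genericity hypothesis on the two longest edge lengths, while the assertion $\max\{\dist(x,y):x,y\in M\}=\diam\Gamma$ survives in all these subcases. This is a substantive correction to the conjecture as stated, not a defect of your argument.
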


Note that in this case we would have $|M_\loc| = E$.

\appendix

\section{Continuity of the eigenfunctions with respect to edge lengths}
\label{sec:continuity}

In this appendix we establish convergence of eigenfunctions in an appropriate sense as the edge lengths of the metric graph $\Gamma$ (as always taken to satisfy Assumption~\ref{ass}) vary and tend to some limit, including possibly length zero in the limit. The convergence of the corresponding eigenvalues is now well known and can be found, for example, in \cite[Appendix~A]{BaLe17} or \cite[Theorem~3.6]{BLS19}; convergence of the eigenvalues and eigenfunctions when the edge lengths remain strictly positive in the limit has been known for longer (see for example \cite{BK12}). While \cite{BLS19} also establishes generalised norm resolvent convergence (again, in an appropriate sense; see also \cite{Cac19} for related results in a somewhat different context), here we will give an elementary proof ``by hand'' that the eigenfunctions, suitably scaled, converge in $H^1$ and hence in $L^\infty$. As a direct consequence hot and cold spots behave continuously with respect to edge length perturbations (although care is needed in the case of eigenvalue crossings).

We will work with the following set-up. Let $\cG$ be a given discrete graph. Consider a corresponding metric graph $\Gamma (\ell)$ obtained by equipping $\cG$ with a vector $\ell = \{L (e) \}_{e \in \cE}$ of positive edge lengths. We are going to send the vector $\ell$ to a limit $\widetilde \ell = \{\widetilde L (e) \}_{e \in \cE}$ that is allowed to contain zero entries and denote the metric graph corresponding to these edge lengths by $\Gamma (\widetilde \ell)$. We stress that the underlying discrete graph of $\Gamma (\widetilde \ell)$ no longer equals $\cG$ if $\widetilde \ell$ has zero entries. In fact, denoting by $\Gamma_+ (\ell)$ the subgraph of $\Gamma (\ell)$ that consists of all edges $e$ such that $\widetilde L (e) > 0$ and by $\cG_+$ the corresponding subgraph of $\cG$, the underlying discrete graph of $\Gamma (\widetilde \ell)$ may be obtained from $\cG_+$ by gluing together pairs of vertices incident with edges to be shrunk to zero. We denote by $\cE_+$ the subset of edges $e$ of $\cG$ for which $\widetilde L (e)$ is positive, i.e.\ the edge set of $\cG_+$. Accordingly we set $\cE_0 = \cE \setminus \cE_+$ and denote by $\Gamma_0 (\ell)$ the subgraph of $\Gamma (\ell)$ formed by the edges in $\cE_0$. To simplify notation we will often suppress the $\widetilde \ell$ and just write $\Gamma$ for the limit graph $\Gamma (\widetilde \ell)$.

Given the $k$-th eigenvalue $\mu_k (\Gamma (\ell))$ of the standard Laplacian on $\Gamma (\ell)$, we will denote by $\psi_{k, \ell}$ a corresponding eigenfunction, normalised so that $\|\psi_{k, \ell}\|_{L^2 (\Gamma (\ell))} = 1$. Likewise, for the eigenvalue $\mu_k (\Gamma)$ on $\Gamma$, $\psi_k$ will denote a corresponding eigenfunction with $\|\psi_k\|_{L^2(\Gamma)} = 1$. Finally, we define a family of scaling operators $\cJ_\ell : H^1 (\Gamma) \to H^1 (\Gamma (\ell))$ as follows. Given a function $f \in H^1 (\Gamma)$, we define
\begin{align}\label{eq:Jell}
 (\cJ_\ell f ) (x) := \begin{cases}
                      \sum_{e \in \cE_+} \chi_e (x) \sqrt{\frac{L (e)}{\widetilde L (e)}} f \Big( \frac{\widetilde L (e)}{L (e)} x \Big), & x \in \Gamma_+ (\ell), \\
                      \sum_{e \in \cE_0} \chi_e (x) f (v_e), & x \in \Gamma_0 (\ell),
                     \end{cases}
\end{align}
where $v_e$ is the vertex of $\Gamma$ into which the edge $e$ collapses; that is, we rescale the edges that tend to a positive limit linearly and extend $f$ by a constant to the edges that are shrunk to zero. We remark that this approach differs slightly from the one used in~\cite[Section~3]{BLS19}. Now we can state the convergence result for the eigenfunctions.

\begin{theorem}
\label{thm:appendix}
With the notation described above, suppose that $\mu_k (\Gamma)$ is a simple eigenvalue of the Laplacian on $\Gamma = \Gamma (\widetilde \ell)$ with standard vertex conditions. Then $\mu_k (\Gamma (\ell))$ is also simple for all $\ell$ sufficiently close to $\widetilde \ell$, $\mu_k (\Gamma (\ell)) \to \mu_k (\Gamma)$ as $\ell \to \widetilde \ell$, and if the normalised eigenfunctions $\psi_{k, \ell}$ are chosen correctly (i.e., with the correct sign), then $\psi_{k, \ell} - \cJ_\ell \psi_k \to 0$ in $H^1 (\Gamma (\ell))$. In particular,
\begin{align*}
 \sup_{x \in \Gamma (\ell)} \big| \psi_{k, \ell} (x) -  \big( \cJ_\ell \psi_k \big) (x) \big| \to 0
\end{align*}
as $\ell \to \widetilde \ell$.
\end{theorem}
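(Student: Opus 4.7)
The strategy is to combine the already-established eigenvalue convergence with a weak-$H^1$ compactness argument for the eigenfunctions on the limit graph $\Gamma$, identify the weak limit using simplicity of $\mu_k(\Gamma)$, and then upgrade to strong $H^1$ (and hence $L^\infty$) convergence via matching of norms. First I would cite the eigenvalue convergence $\mu_k(\Gamma(\ell)) \to \mu_k(\Gamma)$ from \cite[Theorem~3.6]{BLS19}. Simplicity of $\mu_k(\Gamma(\ell))$ for all $\ell$ sufficiently close to $\widetilde\ell$ is then automatic, since by continuity the spectral gaps $\mu_k(\Gamma)-\mu_{k-1}(\Gamma)$ (if $k>1$) and $\mu_{k+1}(\Gamma)-\mu_k(\Gamma)$ persist under small perturbations. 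From the normalisation $\|\psi_{k,\ell}\|_{L^2(\Gamma(\ell))}=1$ together with $\|\psi_{k,\ell}'\|_{L^2(\Gamma(\ell))}^2 = \mu_k(\Gamma(\ell))$, the family $\{\psi_{k,\ell}\}$ is uniformly bounded in $\widetilde H^1(\Gamma(\ell))$.

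Next I would introduce a pullback operator $\cK_\ell\colon H^1(\Gamma(\ell)) \to H^1(\Gamma)$ dual to $\cJ_\ell$: on each edge $e \in \cE_+$ apply the natural linear rescaling from $[0,L(e)]$ to $[0,\widetilde L(e)]$, and collapse each edge $e \in \cE_0$ to the corresponding vertex $v_e$ of $\Gamma$. The crucial observation is that, by Cauchy--Schwarz, the oscillation on a shrinking edge satisfies
\begin{align*}
 \sup_{x \in e} \psi_{k,\ell}(x) - \inf_{x \in e} \psi_{k,\ell}(x) \leq \sqrt{L(e)} \, \|\psi_{k,\ell}'\|_{L^2(e)} \to 0 \qquad (e \in \cE_0),
\end{align*}
so the endpoint values at the vertices which are merged into $v_e$ agree in the limit, and $\cK_\ell \psi_{k,\ell}$ is (up to a vanishing correction) a well-defined continuous function on $\Gamma$. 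By weak $H^1$ compactness and the compact embedding $H^1(\Gamma) \hookrightarrow L^2(\Gamma)$, some subsequence of $\cK_\ell \psi_{k,\ell}$ converges weakly in $H^1(\Gamma)$ and strongly in $L^2(\Gamma)$ (hence in $C(\Gamma)$) to some $\varphi \in H^1(\Gamma)$.

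To identify $\varphi$, I would test the weak eigenvalue equation for $\psi_{k,\ell}$ against the $\cJ_\ell$-image of an arbitrary $g \in H^1(\Gamma)$ and pass to the limit, using the convergence of the bilinear forms under rescaling of the non-collapsing edges and the negligibility of contributions from $\cE_0$. This yields
\begin{align*}
 \int_\Gamma \varphi' \overline{g'}\, \d x = \mu_k(\Gamma) \int_\Gamma \varphi \overline{g}\, \d x \qquad \text{for all } g \in H^1(\Gamma),
\end{align*}
so $\varphi$ is an eigenfunction of $-\Delta_\Gamma$ at $\mu_k(\Gamma)$ with $\|\varphi\|_{L^2(\Gamma)} = 1$. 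Simplicity then forces $\varphi = \pm\psi_k$; choosing the sign of $\psi_{k,\ell}$ correctly and using uniqueness of the limit, the full family converges. I would upgrade this to strong $H^1$ convergence by noting the norm identity $\|\cK_\ell \psi_{k,\ell}\|_{H^1(\Gamma)}^2 \to 1 + \mu_k(\Gamma) = \|\psi_k\|_{H^1(\Gamma)}^2$, which combined with weak convergence gives strong convergence. Pushing this forward via $\cJ_\ell$ yields $\psi_{k,\ell} - \cJ_\ell \psi_k \to 0$ in $H^1(\Gamma(\ell))$. The uniform $L^\infty$ statement then follows from the continuous embedding $H^1(\Gamma(\ell)) \hookrightarrow C(\Gamma(\ell))$, whose operator norm remains uniformly bounded in $\ell$ because the total length of $\Gamma(\ell)$ stays bounded.

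The main obstacle is the precise bookkeeping on the shrinking edges $e \in \cE_0$: one must show simultaneously that $\psi_{k,\ell}$ becomes essentially constant along each such edge (so that a coherent limit object on the merged graph $\Gamma$ can be defined) and that $\cJ_\ell \psi_k$ contributes negligible $L^2$ and Dirichlet energy on $\cE_0$ as $\ell \to \widetilde\ell$. Both rest on the uniform $H^1$ bound together with the fact that $L(e) \to 0$ for $e \in \cE_0$, but checking that weak convergence of the bilinear forms actually holds against all test functions $\cJ_\ell g$ (including keeping track of the rescaling factors at common vertices) is the most delicate technical point.
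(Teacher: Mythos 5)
Your overall architecture matches the paper's: pull the eigenfunctions back to the limit graph $\Gamma$ by edgewise rescaling, extract a weak $\widetilde H^1(\Gamma)$ limit, use the Cauchy--Schwarz oscillation estimate on shrinking edges to see that the limit satisfies the continuity condition at merged vertices, identify the limit, upgrade to strong convergence via convergence of norms, and push forward with $\cJ_\ell$. The one genuinely different step is the identification of the weak limit: you test the weak eigenvalue equation against $\cJ_\ell g$ for arbitrary $g \in H^1(\Gamma)$ and pass to the limit in the forms, concluding directly that $\varphi$ is an eigenfunction for $\mu_k(\Gamma)$ and invoking simplicity. The paper instead argues variationally, by induction on $k$: it shows the limit is normalised, has Rayleigh quotient at most $\mu_k(\Gamma)$, and is $L^2$-orthogonal to the (inductively already identified) limits $\psi_1,\dots,\psi_{k-1}$, then appeals to the min--max characterisation. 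Your route is more direct and dispenses with the induction and the orthogonality bookkeeping; its only extra cost is verifying form convergence against the test functions $\cJ_\ell g$, which is unproblematic since $(\cJ_\ell g)'$ vanishes identically on $\cE_0$ and the rescaling factors on $\cE_+$ tend to $1$ uniformly.

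One justification in your closing paragraph is not correct as written: the negligibility of $\int_{\Gamma_0(\ell)}|\psi_{k,\ell}'|^2\,\d x$ does \emph{not} follow from the uniform $\widetilde H^1$ bound together with $L(e)\to 0$; a bounded sequence in $H^1$ can concentrate all of its Dirichlet energy on a set of vanishing measure. This is precisely why the paper proves the uniform pointwise bound $\|\psi'\|_{L^\infty(\Gamma)} \leq \mu L(\Gamma)\|\psi\|_{L^\infty(\Gamma)}$ (Lemma~\ref{lem:ef-linfty-control}), whose proof via maximal increasing paths and the Kirchhoff conditions is the hardest technical point of the appendix. (The analogous claim for the $L^2$-mass on $\cE_0$ \emph{is} fine, since $\|\psi\|_{L^\infty}$ is controlled by the $H^1$ bound alone.) However, within your scheme this can be repaired without Lemma~\ref{lem:ef-linfty-control}: once the weak limit is identified as $\pm\psi_k$, weak lower semicontinuity gives $\mu_k(\Gamma)=\|\psi_k'\|_{L^2(\Gamma)}^2 \leq \liminf \|(\cK_\ell\psi_{k,\ell})'\|_{L^2(\Gamma)}^2$, while the crude upper bound $\|(\cK_\ell\psi_{k,\ell})'\|_{L^2(\Gamma)}^2 \leq (1+o(1))\,\mu_k(\Gamma(\ell)) \to \mu_k(\Gamma)$ holds by simply discarding the (nonnegative) energy on $\cE_0$. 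The two inequalities sandwich the norms, yielding both the norm convergence you need for strong $H^1$ convergence and, a posteriori, that no Dirichlet energy is lost on the shrinking edges. If you make this sandwich explicit, your argument is complete and in fact somewhat leaner than the paper's.
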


\begin{remark}
\label{rem:cont-mult-ev}
If $\mu_k (\Gamma)$ is not simple, say $\mu_k (\Gamma) = \mu_{k+1} (\Gamma)$, then there are two possibilities: either (1) $\mu_k (\Gamma (\ell)) = \mu_{k+1} (\Gamma (\ell))$ for all $\ell$ sufficiently close to $\widetilde \ell$ and the convergence statements of the theorem continue to hold for both $\mu_k$ and $\mu_{k+1}$ and a suitably chosen basis of the space of corresponding eigenfunctions; or (2) $\mu_k (\Gamma (\ell)) < \mu_{k+1} (\Gamma (\ell))$. In the latter case, given $\psi_{k, \ell}$ and $\psi_{k+1, \ell}$, we can \emph{find} a basis of eigenfunctions $\{\psi_k, \psi_{k+1}\}$ for $\mu_k (\Gamma) = \mu_{k+1} (\Gamma)$, such that the conclusions hold for these eigenfunctions. The proofs are essentially the same and we do not go into details.
\end{remark}

Before giving the proof of Theorem~\ref{thm:appendix} we first state a technical result which allows us to control both the $L^\infty$-norm of an eigenfunction and its derivative, as well as their $L^2$-norms localised on a part of the graph, in terms of the eigenvalue.

\begin{lemma}
\label{lem:ef-linfty-control}
Given $\Gamma$, let $\psi$ be an eigenfunction corresponding to the eigenvalue $\mu>0$ of the Laplacian on $\Gamma$ with standard vertex conditions, normalised so that $\|\psi\|_{L^2 (\Gamma)}=1$. Then
\begin{equation}
\label{eq:ef-linfty-control}
	\|\psi\|_{L^\infty (\Gamma)} \leq \sqrt{\mu L (\Gamma)}.
\end{equation}
In particular, if $\Gamma_0 \subset \Gamma$ is an arbitrary subgraph of $\Gamma$, then
\begin{equation}
\label{eq:ef-local-control}
	\|\psi|_{\Gamma_0}\|_{L^2 (\Gamma_0)} \leq \sqrt{L (\Gamma_0) \mu L (\Gamma)}.
\end{equation}
For the derivative $\psi'$ we have
\begin{equation}
\label{eq:deriv-linfty-control}
	\|\psi'\|_{L^\infty(\Gamma)} \leq \mu L (\Gamma) \|\psi\|_{L^\infty (\Gamma)},
\end{equation}
and, with $\Gamma_0$ as above,
\begin{equation}
\label{eq:deriv-local-control}
	\|\psi'|_{\Gamma_0}\|_{L^2 (\Gamma_0)} \leq \sqrt{L (\Gamma_0)} \mu L (\Gamma) \|\psi\|_{L^\infty (\Gamma)} \leq \sqrt{L (\Gamma_0)}(\mu L (\Gamma))^{3/2}.
\end{equation}
\end{lemma}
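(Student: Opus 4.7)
The plan is to split the four inequalities into two groups. Bounds (1) and (2) will follow from the fact that $\psi$ is orthogonal to constants (a consequence of $\mu > 0$), while (3) and (4) will exploit the second-order ODE $\psi_e'' = -\mu \psi_e$ on each edge together with the Kirchhoff condition at the vertices.

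For bound (1), I would first note that $\mu > 0$ forces $\int_\Gamma \psi = 0$, so by continuity there exists a zero $y_0 \in \Gamma$ of $\psi$. Let $x_0 \in \Gamma$ attain $\|\psi\|_{L^\infty(\Gamma)}$ and let $\gamma$ be a simple path in $\Gamma$ from $y_0$ to $x_0$. Since a simple path traverses each edge at most once, $|\gamma| \leq L(\Gamma)$, so by Cauchy--Schwarz and the standard identity $\|\psi'\|_{L^2(\Gamma)}^2 = \mu \|\psi\|_{L^2(\Gamma)}^2 = \mu$,
\[
\|\psi\|_{L^\infty(\Gamma)} = \left|\int_\gamma \psi'\right| \leq \sqrt{|\gamma|}\,\|\psi'\|_{L^2(\gamma)} \leq \sqrt{L(\Gamma)\mu}.
\]
Bound (2) is then immediate from (1) via $\|\psi|_{\Gamma_0}\|_{L^2(\Gamma_0)}^2 \leq L(\Gamma_0)\|\psi\|_{L^\infty(\Gamma)}^2$.

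For bound (3), I would pick a global maximum $x^* \in \Gamma$ of $\psi$ (WLOG $\psi(x^*) > 0$). The key observation is that all edge-derivatives of $\psi$ at $x^*$ must vanish: if $x^*$ lies in the interior of an edge this is Fermat's lemma, while if $x^*$ is a vertex, the outward derivatives on the incident edges are each non-positive by the maximum property and sum to zero by Kirchhoff, so all vanish. For $y$ on an edge $e$ incident to $x^*$, integrating $\psi_e'' = -\mu \psi_e$ from $x^*$ immediately gives $|\psi_e'(y)| \leq \mu L(e)\|\psi\|_{L^\infty}$. The plan for an arbitrary $y$ is to propagate along a simple path from $x^*$ to $y$ (of total length $\leq L(\Gamma)$), integrating the ODE on each edge of the path and tracking jumps of the edge-derivative at intermediate vertices via Kirchhoff. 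Once (3) is established, (4) follows by $\|\psi'|_{\Gamma_0}\|_{L^2(\Gamma_0)} \leq \sqrt{L(\Gamma_0)}\|\psi'\|_{L^\infty(\Gamma)}$ combined with (3), and the final inequality in (4) by inserting the bound on $\|\psi\|_{L^\infty}$ from (1).

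The main obstacle will be the careful accounting of jump contributions in the propagation step for (3): the jump of the edge-derivative at an intermediate vertex $v$ along the path equals the negated sum of outward derivatives on the edges branching off the path at $v$, which are not a priori bounded in terms of $\|\psi\|_{L^\infty}$ alone. I expect that closing this loop will require either a bootstrap/self-improvement argument, or an alternative approach via the conserved edge energy $E_e = \psi_e^2 + (\psi_e')^2/\mu$, which is constant on each edge and whose $L^1$-type average over $\Gamma$ is controlled by $\|\psi\|_{L^2}^2 + \|\psi'\|_{L^2}^2/\mu = 2$.
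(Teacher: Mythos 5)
Your arguments for \eqref{eq:ef-linfty-control} and \eqref{eq:ef-local-control} are correct and essentially identical to the paper's (zero of $\psi$ from mean-zero, non-self-intersecting path to the maximiser, Cauchy--Schwarz against $\|\psi'\|_{L^2}^2=\mu$, then the trivial $L^\infty$-to-$L^2$ comparison on $\Gamma_0$). The deduction of \eqref{eq:deriv-local-control} from \eqref{eq:deriv-linfty-control} and \eqref{eq:ef-linfty-control} is also fine. The problem is \eqref{eq:deriv-linfty-control}, where your proposal does not close, and you say so yourself: propagating $\psi'$ along a single simple path from a global maximum $x^*$ of $\psi$ fails because at each intermediate vertex the Kirchhoff jump is the negated sum of the derivatives on the branching edges, and these are exactly the quantities you are trying to bound. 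Neither of your suggested repairs is worked out, and the edge-energy route in particular does not deliver the stated inequality: the invariant $E_e = \psi_e^2 + (\psi_e')^2/\mu$ is indeed constant on each edge, but $\sqrt{E_e}$ is the \emph{amplitude} of the sinusoid on $e$, which can greatly exceed $\|\psi\|_{L^\infty(e)}$ on a short edge that carries a large derivative but small function values; and the global control $\sum_e L(e)E_e = 2$ only gives $E_e \le 2/L(e)$, which degenerates precisely in that regime and in any case is not the bound $\mu L(\Gamma)\|\psi\|_{L^\infty}$ being claimed.

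The paper's resolution reverses the direction of your propagation and makes it global rather than along one path. First one bounds the oscillation on each edge by integrating the ODE: $\osc_e\psi' \le \mu L(e)\|\psi\|_{L^\infty}$. Then one starts at a point $v_0$ where $|\psi'|$ attains its maximum, oriented so that $\psi$ is increasing along the initial edge, and forms the union $\widehat\Gamma$ of \emph{all} maximal increasing paths emanating from $v_0$ (each terminating at a local maximum of $\psi$, where every derivative vanishes by Kirchhoff). The point is that at any vertex $v$ of $\widehat\Gamma$ which is not a local maximum, every incident edge along which $\psi$ increases away from $v$ is by construction contained in $\widehat\Gamma$, so the edges omitted from $\widehat\Gamma$ at $v$ all contribute to the Kirchhoff balance with a favourable sign; this yields the one-sided inequality $\sum_{\text{incoming}}\psi'(L(e)) \le \sum_{\text{outgoing}}\psi'(0)$ rather than an uncontrolled jump. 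Combining this with $\psi'(0) = \psi'(L(e)) + \osc_e\psi'$ on each (positively oriented, monotone) edge and inducting from the terminal vertices back to $v_0$ gives $\|\psi'\|_{L^\infty(\Gamma)} \le \sum_{e\subset\widehat\Gamma}\osc_e\psi' \le \mu L(\Gamma)\|\psi\|_{L^\infty(\Gamma)}$. This branching/telescoping argument is the missing idea; without it (or a genuine substitute) your proof of \eqref{eq:deriv-linfty-control}, and hence of \eqref{eq:deriv-local-control}, is incomplete.
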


\begin{proof}
Since $\mu>0$ is not the first eigenvalue, $\psi$ changes sign in $\Gamma$ and so there exist $x_0, x_m \in \Gamma$ such that $\psi(x_0)=0$ and $|\psi(x_m)|=\|\psi\|_{L^\infty(\Gamma)}$; we will assume without loss of generality that $\psi(x_m)>0$, and that $x_0$ and $x_m$ are vertices (possibly of degree two). Now since $\Gamma$ is connected, there exists a non-self-intersecting path $\cP \subset \Gamma$ running from $x_0$ to $x_m$. Hence, by the fundamental theorem of calculus applied to $\psi \in C^\infty (0, L (e))$ along each edge $e$ in $\cP$,
\begin{displaymath}
	\|\psi\|_{L^\infty(\Gamma)} = \psi(x_m) - \psi(x_0) = \int_{\cP} \psi'(x)\,\textrm{d}x \leq \int_{\cP}|\psi'(x)|\,\textrm{d}x
	\leq \sqrt{L (\cP)}\|\psi'\|_{L^2(\cP)},
\end{displaymath}
where the last estimate follows from the Cauchy--Schwarz inequality. Since $\cP$ is non-self-intersecting, $L (\cP) \leq L (\Gamma)$ and 
\begin{displaymath}
	\|\psi'\|_{L^2(\cP)} \leq \|\psi'\|_{L^2(\Gamma)} = \sqrt{\mu}.
\end{displaymath}
This proves the first assertion. For the second, simply use the inequality
\begin{displaymath}
	\|\psi|_{\Gamma_0}\|_{L^2(\Gamma_0)}^2 \leq L (\Gamma_0) \|\psi|_{\Gamma_0}\|_{L^\infty (\Gamma_0)}^2.
\end{displaymath}
For the third, fix any edge $e \simeq [0,L(e)]$; then for any $x,y \in [0,L(e)]$ we have
\begin{displaymath}
	|\psi'(x) - \psi'(y)| = \left| \int_x^y \psi''(t)\,\textrm{d}t\right| = \mu \left|\int_x^y \psi(t)\,\textrm{d}t\right| \leq \mu L (e) \|\psi\|_{L^\infty(0, L (e))}.
\end{displaymath}
This gives us a corresponding bound on the oscillation on that edge,
\begin{equation}
\label{eq:osc-bound}
 \osc_{e} \psi' : = \sup_{x,y \in [0, L (e)]} |\psi'(x)-\psi'(y)| \leq \mu L (e) \|\psi\|_{L^\infty(0, L (e))}.
\end{equation}
Since $\psi'$ is not continuous on $\Gamma$, an additional argument is necessary to obtain \eqref{eq:deriv-linfty-control} from \eqref{eq:osc-bound}. First observe that since $\mu>0$ and hence $\psi$ is not constant, it attains a maximum; moreover, since $\psi'$ is continuous on each (closed) edge, $\|\psi'\|_{L^\infty(\Gamma)}$ is attained. We select a point $v_0$ at which $|\psi'|$ attains its maximum; we assume without loss of generality that $v_0$ is a vertex, possibly of degree two (and the maximum is attained at the endpoint of a given edge $e_0$ incident to $v_0$); we also assume that the edge orientation is such that $v_0 = o (e_0)$ and $\psi|_{e_0}'(0)>0$. Finally, we likewise assume that every local maximum of $\psi$ is a vertex (of degree $\geq 1$). Note that at any local maximum of $\psi$ the Kirchhoff condition guarantees that all derivatives are zero.

By assumption, with our orientation $\psi$ is monotonically increasing on $e_0$. We now consider the closed subset $\widehat\Gamma$ of $\Gamma$ consisting of all (oriented) maximal increasing paths from $v_0$, as follows: we call a non-self-intersecting oriented path starting from $v_0$ (more precisely, starting along $e_0$) \emph{maximal} if $\psi$ is increasing along each edge of the path, and the path ends at a local maximum of $\psi$. Since $\psi$ has at least one maximum, the set of all maximal paths is non-empty; as announced, we denote by $\widehat\Gamma$ their union.

We claim that
\begin{equation}
\label{eq:oscillation-claim}
	\|\psi'\|_{L^\infty(\Gamma)} \leq \sum_{e \subset \widehat{\Gamma}} \osc_{e} \psi'.
\end{equation}
The desired estimate \eqref{eq:deriv-linfty-control} then follows from \eqref{eq:oscillation-claim} and \eqref{eq:osc-bound} by summing the latter over all edges in $\widehat\Gamma$. To prove the claim, we note that the orientation of all paths in $\widehat\Gamma$ is consistent, in the sense that if any two paths share an edge, then that edge carries the same orientation on both paths; moreover, with said orientation, $\psi$ is monotonically increasing on each edge. For each such edge $e \simeq [0,L(e)]$, we will assume the orientation is such that $0$ is the initial vertex and $L(e)$ the terminal vertex, that is, $\psi(0) < \psi(x) < \psi(L(e))$ for all $x \in (0,L(e))$.

As such, for any vertex $v \in \widehat\Gamma$ (excluding $v_0$) all edges in $\widehat\Gamma$ incident to it may be classified as either \emph{incoming} (if the paths are oriented towards $v$ on these edges, equivalently, $\psi$ on these edges is below $\psi(v)$) or \emph{outgoing} ($\psi$ is above $\psi(v)$). Moreover, by construction, if $v \in \widehat\Gamma$ is not a local maximum, then every edge incident to it such that $\psi$ is increasing on that edge away from $v$, that is the derivative on $e$ in the direction towards $v$ is negative, is necessarily in $\widehat\Gamma$, as this set is the union of all possible maximal paths. Hence, for every such $v$, with the notation and orientation as above, the Kirchhoff condition implies that

\begin{equation}
\label{eq:path-condition}
	\sum_{\substack{e \in \cE (\widehat \Gamma) \text{ incoming}\\ \text{at } v}} \psi'(L(e)) \leq \sum_{\substack{e \in \cE (\widehat \Gamma) \text{ outgoing}\\ \text{at } v}} \psi'(0).
\end{equation}
Now by our choice of orientation and the fact that $\psi$ is monotonically increasing (and hence $\psi'$ is positive but monotonically decreasing), on every edge $e$ in $\widehat\Gamma$ we have
\begin{equation}
\label{eq:edge-path-oscillation}
	\psi'(0) = \psi'(L(e)) + \osc_e \psi'.
\end{equation}
An induction argument combining \eqref{eq:path-condition} and \eqref{eq:edge-path-oscillation}, plus the initial condition \linebreak $\psi' |_{e_0} (0)=\|\psi'\|_{L^\infty (\Gamma)}$ at the initial edge $e_0$, and the terminal conditions $\psi'=0$ at the end of every path, proves the claim and hence \eqref{eq:deriv-linfty-control}. Finally, \eqref{eq:deriv-local-control} follows from \eqref{eq:deriv-linfty-control} just as \eqref{eq:ef-local-control} follows from~\eqref{eq:ef-linfty-control}.
\end{proof}

\begin{proof}[Proof of Theorem~\ref{thm:appendix}]
The convergence of the eigenvalues follows directly from the references given at the beginning of the appendix. We have to prove the statements about the eigenfunctions. For this we introduce the following notation: for any $f \in L^2 (\Gamma (\ell))$ we define a function $\widetilde f \in L^2 (\Gamma)$ by
\begin{displaymath}
	\widetilde f (x) := \sum_{e \in \cE_+} \chi_e (x) \sqrt{\frac{\widetilde L (e)}{L (e)}} f \bigg( \frac{L (e)}{\widetilde L (e)} x\bigg), \qquad x \in \Gamma,
\end{displaymath}
that is, $\widetilde f$ is obtained from $f$ by rescaling the function on edges that maintain a positive length in the limit as $\ell \to \widetilde \ell$ and the edges whose lengths converge to zero are cut off from the support of $\widetilde f$. We remark that $f \in H^1 (\Gamma (\ell))$ implies that $\widetilde f$ belongs to $\widetilde H^1 (\Gamma)$ but not necessarily to $H^1 (\Gamma)$. We remark also that for any $f \in H^1 (\Gamma)$ and any $\ell$ we have $\widetilde{\cJ_\ell f} = f$ with $\cJ_\ell$ defined in~\eqref{eq:Jell}. In steps 1--5 of this proof we will show that for the eigenfunctions $\psi_{k, \ell}, \psi_k$ mentioned in the theorem we have 
\begin{align}\label{eq:soGutwieGut}
 \widetilde \psi_{k, \ell} \to \psi_k
\end{align}
in $\widetilde H^1 (\Gamma)$ and, hence, also in $C (0, \widetilde L (e))$ for each edge $e$ of $\Gamma$. In Step~6 we will then obtain the actual statement of the theorem.

\emph{Step 1:} up to a subsequence, $(\widetilde\psi_{k, \ell})$ has a weak limit $\psi^\ast$ in $\widetilde H^1(\Gamma)$ as $\ell \to \widetilde \ell$; convergence is strong in $L^2 (\Gamma)$, and $\|\psi^\ast\|_{L^2 (\Gamma)}=1$. To prove this, first note that (with our normalisation) $\|\psi_{k, \ell}'\|_{L^2(\Gamma (\ell))}^2 = \mu_k (\Gamma (\ell)) \to \mu_k (\Gamma)$, which means, when combined with the definition of $\widetilde\psi_k$ and the convergence of the edge lengths, that the family $(\widetilde\psi_{k, \ell})$ is bounded in $\widetilde H^1 (\Gamma)$ for $\ell$ in a neighbourhood of $\widetilde \ell$. This gives the existence of a weakly convergent subsequence as $\ell \to \widetilde \ell$ (say, to $\psi^\ast$), which is strongly convergent in $L^2 (\Gamma)$ by compactness of the embedding $\widetilde H^1 (\Gamma) \hookrightarrow L^2 (\Gamma)$. Now it follows from a short calculation that
\begin{displaymath}
	\big\|\widetilde \psi_{k, \ell} \big\|_{L^2(\Gamma)}^2 = \int_{\Gamma_+ (\ell)} |\psi_{k, \ell}|^2 \,\textrm{d}x = \|\psi_{k, \ell}\|_{L^2 (\Gamma (\ell))}^2 - \int_{\Gamma_0 (\ell)} |\psi_{k, \ell}|^2\,\textrm{d} x,
\end{displaymath}
and by~\eqref{eq:ef-local-control} we have
\begin{displaymath}
	\int_{\Gamma_0 (\ell)} |\psi_{k, \ell}|^2\,\textrm{d}x \leq L (\Gamma_0 (\ell)) \mu_k (\Gamma (\ell)) L (\Gamma (\ell)) \to 0
\end{displaymath}
as $\ell \to \widetilde \ell$. As the $\psi_{k, \ell}$ are normalised it follows that $\|\psi^\ast\|_{L^2(\Gamma)} = \lim_{\ell \to \widetilde \ell} \|\widetilde \psi_{k, \ell}\|_{L^2 (\Gamma)} = 1$.

\emph{Step 2:} $\psi^\ast \in H^1 (\Gamma)$. Let $e \in \cE_0$, that is, $L(e) \to 0$ as $\ell \to \widetilde \ell$. To show that $\psi^\ast \in H^1 (\Gamma)$, it suffices to show that
\begin{equation}
\label{eq:step2}
	\psi_{k, \ell} (L(e)) - \psi_{k, \ell} (0) \to 0
\end{equation}
as $\ell \to \widetilde \ell$, since then $\psi^\ast$ will satisfy the continuity condition at the corresponding vertex in $\Gamma$ (since then \eqref{eq:step2} will hold for \emph{all} such edges, and $\widetilde\psi_{k, \ell}$ is by construction continuous across every pair of adjacent edges which are not separated by vanishing edges in $\Gamma (\ell)$). To prove \eqref{eq:step2} we argue as in the proof of Lemma~\ref{lem:ef-linfty-control}:
\begin{align}\label{eq:japp}
\begin{split}
	|\psi_{k, \ell} (L(e)) - \psi_{k, \ell} (0)| & \leq \int_0^{L(e)} |\psi_{k, \ell}'|\,\textrm{d} x\leq \sqrt{L(e)}\|\psi_{k, \ell}'\|_{L^2(0, L (e))}\\
	& \leq \sqrt{L (e)} \|\psi_{k, \ell}'\|_{L^2 (\Gamma (\ell))} = \sqrt{L(e)}\sqrt{\mu_k(\Gamma (\ell))} \to 0
\end{split}
\end{align}
since $L(e) \to 0$ and $\mu_k(\Gamma (\ell))$ remains bounded as $\ell \to \widetilde \ell$.

\emph{Step~3:} up to a subsequence,
\begin{align}\label{eq:jetztSchlaegtsDreizehn}
 \big\| \widetilde \psi_{k, \ell} \big\|_{\widetilde H^1 (\Gamma)}^2 \to 1 + \mu_k (\Gamma)
\end{align}
as $\ell \to \widetilde \ell$. Indeed, we already showed that $\|\widetilde \psi_{k, \ell}\|_{L^2 (\Gamma)}^2 \to 1$ in Step~1. Moreover, 
\begin{align*}
 \big\| \widetilde \psi_{k, \ell}' \|_{L^2 (\Gamma)}^2 & = \sum_{e \in \cE_+} \bigg( \frac{L (e)}{\widetilde L (e)} \bigg)^2 \int_0^{L (e)} |\psi_{k, \ell}'|^2 \, \textup{d} x \leq \max_{e \in \cE_+} \bigg( \frac{L (e)}{\widetilde L (e)} \bigg)^2 \int_{\Gamma_+ (\ell)} |\psi_{k, \ell}'|^2 \,\textup{d} x \\
 & = \max_{e \in \cE_+} \bigg( \frac{L (e)}{\widetilde L (e)} \bigg)^2 \bigg( \|\psi_{k, \ell}'\|_{L^2 (\Gamma (\ell))}^2 - \int_{\Gamma_0 (\ell)} |\psi_{k, \ell}'|^2\,\textrm{d} x \bigg)
\end{align*}
and in the same way
\begin{align*}
 \big\| \widetilde \psi_{k, \ell}' \|_{L^2 (\Gamma)}^2 & \geq \min_{e \in \cE_+} \bigg( \frac{L (e)}{\widetilde L (e)} \bigg)^2 \bigg( \|\psi_{k, \ell}'\|_{L^2 (\Gamma (\ell))}^2 - \int_{\Gamma_0 (\ell)} |\psi_{k, \ell}'|^2\,\textrm{d} x \bigg).
\end{align*}
Since
\begin{align*}
 \int_{\Gamma_0 (\ell)} |\psi_{k, \ell}'|^2\,\textrm{d} x \leq L (\Gamma_0 (\ell)) \big( \mu_k (\Gamma (\ell)) L (\Gamma (\ell)) \big)^3 \to 0
\end{align*}
as $\ell \to \widetilde \ell$ by~\eqref{eq:deriv-local-control}, the latter estimates and $\|\psi_{k, \ell}'\|_{L^2 (\Gamma (\ell))}^2 = \mu_k (\Gamma (\ell))$ yield
\begin{align*}
 \big\| \widetilde \psi_{k, \ell}' \|_{L^2 (\Gamma)}^2 \to \mu_k (\Gamma)
\end{align*}
as $\ell \to \widetilde \ell$. This proves~\eqref{eq:jetztSchlaegtsDreizehn}.

\emph{Step~4:} $\psi^\ast = \psi_k$ is the eigenfunction for $\mu_k (\Gamma)$. We prove this by induction on~$k$. When $k=1$ the claim is a direct consequence of the fact that the eigenfunctions are constant: we have
\begin{displaymath}
 \widetilde\psi_{1, \ell}^2 \equiv \frac{1}{L (\Gamma (\ell))}, \qquad \psi_1^2 \equiv \frac{1}{L (\Gamma)},
\end{displaymath}
and $L (\Gamma (\ell)) \to L (\Gamma)$ as $\ell \to \widetilde \ell$. Now suppose the assertion is true for $i = 1, \ldots, k - 1$. Firstly, since up to a subsequence $\widetilde \psi_{k, \ell}$ converges weakly to $\psi^\ast$ in $\widetilde H^1 (\Gamma)$, for this subsequence we have 
\begin{displaymath}
 \|\psi^\ast\|_{\widetilde H^1 (\Gamma)}^2 \leq \liminf_{\ell \to \widetilde \ell} \big\|\widetilde\psi_{k, \ell} \big\|_{\widetilde H^1 (\Gamma)}^2 = 1 + \mu_k (\Gamma)
\end{displaymath}
by Step~3. Since $\psi^\ast \in H^1 (\Gamma)$ with $\|\psi^\ast\|_{L^2(\Gamma)}=1$, by the variational characterisation of $\mu_k (\Gamma)$ it suffices to show that $\psi^\ast$ is orthogonal in $L^2 (\Gamma)$ to the eigenfunctions $\psi_1, \ldots, \psi_{k-1}$. For this, in turn it suffices to show that
\begin{equation}
\label{eq:rescaled-orthogonality}
	\int_\Gamma \widetilde\psi_{i, \ell} \widetilde\psi_{k, \ell} \,\textrm{d}x \to 0
\end{equation}
for all $i=1,\ldots, k - 1$, since then
\begin{displaymath}
	\int_\Gamma \psi_i \psi^\ast\,\textrm{d}x = \lim_{\ell \to \widetilde \ell} \int_\Gamma \psi_i \widetilde\psi_{k, \ell}\,\textrm{d}x
	= \lim_{\ell \to \widetilde \ell} \int_\Gamma \widetilde\psi_{i, \ell} \widetilde\psi_{k, \ell} \,\textrm{d} x = 0,
\end{displaymath}
where we have used the weak convergence of $\widetilde\psi_{k, \ell}$ to $\psi_k$, and the strong convergence of $\widetilde\psi_{i, \ell}$ to $\psi_i$, in $L^2(\Gamma)$. To prove \eqref{eq:rescaled-orthogonality}, fix $i$ and use the definition of the rescaled functions:
\begin{equation}
\label{eq:rescaled-on-gamman}
	\int_\Gamma \widetilde\psi_{i, \ell} \widetilde\psi_{k, \ell}\,\textrm{d}x = \int_{\Gamma_+ (\ell)} \psi_{i, \ell}\psi_{k, \ell} \,\textrm{d}x.
\end{equation}
Since on the other hand by the orthogonality of the eigenfunctions on $\Gamma (\ell)$ we have
\begin{displaymath}
	0 = \int_{\Gamma_+ (\ell)} \psi_{i, \ell} \psi_{k,\ell}\,\textrm{d}x + \int_{\Gamma_0 (\ell)} \psi_{i, \ell}\psi_{k, \ell}\,\textrm{d}x,
\end{displaymath}
and
\begin{displaymath}
	\bigg|\int_{\Gamma_0 (\ell)} \psi_{i, \ell}\psi_{k, \ell}\,\textrm{d}x \bigg|
	\leq \|\psi_{i, \ell}\|_{L^2(\Gamma_0 (\ell))} \|\psi_{k, \ell}\|_{L^2 (\Gamma_0 (\ell))} \to 0
\end{displaymath}
by \eqref{eq:ef-local-control}, together with \eqref{eq:rescaled-on-gamman} this establishes \eqref{eq:rescaled-orthogonality} and hence the claim.

\emph{Step 5:} assertion~\eqref{eq:soGutwieGut}. This is now an easy consequence of the previous steps. Indeed, up to a subsequence $(\widetilde \psi_{k, \ell})$ converges weakly to $\psi_k$ in $\widetilde H^1 (\Gamma)$ by Step~4 and the norms converge to $\sqrt{1 + \mu_k (\Gamma)} = \|\psi_k\|_{\widetilde H^1 (\Gamma)}$. From this we obtain strong convergence of a subsequence in $\widetilde H^1 (\Gamma)$. Since the same argument applies to a subsequence of any sequence in the family $(\widetilde \psi_{k, \ell})$, always with the same limit $\psi_k$ (possibly up to the choice of the sign), this yields the convergence~\eqref{eq:soGutwieGut} in $\widetilde H^1 (\Gamma)$. As the embedding of $H^1 (0, \widetilde L (e))$ into $C (0, \widetilde L (e))$ is continuous on each edge $e$ of $\Gamma$, convergence also in $C (0, \widetilde L (e))$ follows on each edge.

\emph{Step 6:} assertion of the theorem. First note that for $x \in \Gamma_+ (\ell)$ we have
\begin{align*}
 \psi_{k, \ell} (x) - \big(\cJ_\ell \psi_k \big) (x) = \sum_{e \in \cE_+} \chi_e (x) \sqrt{\frac{L (e)}{\widetilde L (e)}} \bigg( \widetilde \psi_{k, \ell} \bigg( \frac{\widetilde L (e)}{ L (e)} x \bigg) - \psi_k \bigg( \frac{\widetilde L (e)}{L (e)} x \bigg) \bigg)
\end{align*}
and thus
\begin{align}\label{eq:ersteHaelfte}
 \sup_{x \in \Gamma_+ (\ell)} \big| \psi_{k, \ell} (x) - \big(\cJ_\ell \psi_k \big) (x) \big| \leq \max_{e \in \cE_+} \sqrt{\frac{L (e)}{\widetilde L (e)}} \sup_{y \in \Gamma} \big| \widetilde \psi_{k, \ell} (y) - \psi_k (y) \big| \to 0
\end{align}
as $\ell \to \widetilde \ell$ by the result of Step~5. In order to show the desired convergence on the edges in $\cE_0$ we assume first for simplicity that $|\cE_0| = 1$ and that $e$ is the unique edge in $\cE_0$. Denote by $v_e$ the vertex corresponding to the zero endpoint of $\Gamma (\ell)$ (assuming without loss of generality that the parametrisation direction is chosen independently of $\ell$) and by $\Gamma_+$ the metric graph obtained by equipping $\cG_+$ with the edge lengths $\widetilde L (e)$, $e \in \cE_+$. Then $\widetilde \psi_{k, \ell} |_{\Gamma_+}$ is continuous on $\Gamma_+$ and for $x \in [0, L (e)]$ we have
\begin{align}\label{eq:SoFaengtEsAn}
\begin{split}
 \big| \psi_{k, \ell} (x) - \big(\cJ_\ell \psi_k \big) (x) \big| & = \big| \psi_{k, \ell} (x) - \psi_k (v_e) \big| \\
 & \leq \big| \psi_{k, \ell} (x) - \psi_{k, \ell} |_e (0) \big| + \big| \widetilde \psi_{k, \ell} |_{\Gamma_+} (v_e) - \psi_k (v_e) \big|.
\end{split}
\end{align}
The second summand converges to zero as $\ell \to \widetilde \ell$ by the result of Step~5. For the first one analogously to the computation~\eqref{eq:japp} we get
\begin{align*}
 \big| \psi_{k, \ell} (x) - \psi_{k, \ell} |_e (0) \big| & \leq \int_0^x \big| \psi_{k, \ell}' (t) \big| \, \textup{d} t \leq \sqrt{L (e)} \sqrt{\mu_k (\Gamma (\ell))} \to 0
\end{align*}
as $\ell \to \widetilde \ell$. Putting this into~\eqref{eq:SoFaengtEsAn} and combining it with~\eqref{eq:ersteHaelfte} we obtain the assertion of the theorem in the case that only one edge length shrinks to zero. The general result can be obtained by applying what we have proved successively.
\end{proof}

\begin{ack}
The work of J.B.K. was supported by the Funda\c{c}\~{a}o para a Ci\^{e}ncia e a Tecnologia, Portugal, via the program ``Investigador FCT'', reference IF/01461/2015, and project PTDC/MAT-CAL/4334/2014. J.R.\ gratefully acknowledges financial support by the grant no.\ 2018-04560 of the Swedish Research Council (VR). The first-named author also wishes to acknowledge the kind hospitality afforded to him during a visit to Stockholm University, where part of this work was carried out. The second-named author is grateful to the University of Lisbon for hospitality during a visit. Finally, both authors would like to acknowledge networking support by the COST Action CA18232.
\end{ack}

\end{document}